\def\ls{\lesssim}
\def\gs{\gtrsim}
\def\fz{\infty}
\def\r{\right}
\def\lf{\left}
\def\supp{{\mathop\mathrm{\,supp\,}}}
\def\aa{{\mathbb A}}
\def\rr{{\mathbb R}}
\def\rh{{\mathbb R}{\mathbb H}}
\def\rn{{{\rr}^n}}
\def\zz{{\mathbb Z}}
\def\nn{{\mathbb N}}
\def\cc{{\mathbb C}}
\newcommand{\wz}{\widetilde}
\newcommand{\oz}{\overline}
\newcommand{\ca}{{\mathcal A}}
\newcommand{\cm}{{\mathcal M}}
\newcommand{\cp}{{\mathcal P}}
\newcommand{\ccr}{{\mathcal R}}
\newcommand{\cs}{{\mathcal S}}
\def\az{\alpha}
\def\lz{\lambda}
\def\blz{\Lambda}
\def\bfai{\Phi}
\def\dz {\delta}
\def\epz{\epsilon}
\def\bz{\beta}
\def\fai{\varphi}
\def\gz{{\gamma}}
\def\bgz{{\Gamma}}
\def\vz{\varphi}
\def\tz{\theta}
\def\wz{\widetilde}
\def\ls{\lesssim}
\def\gs{\gtrsim}
\def\boz{\Omega}
\def\oz{\omega}
\def\uc{{\varepsilon}}
\def\fin{{\mathop\mathrm{fin}}}
\def\esup{\mathop\mathrm{\,ess\,sup\,}}
\def\bmo{{{\mathop\mathrm{bmo}}}}
\def\bbmo{{{\mathop\mathrm{BMO}}}}
\def\fin{{{\mathop\mathrm{fin}}}}
\def\hs{\hspace{0.3cm}}
\def\dint{\displaystyle\int}
\def\dsup{\displaystyle\sup}
\newtheorem{theorem}{Theorem}[section]
\newtheorem{lemma}[theorem]{Lemma}
\newtheorem{corollary}[theorem]{Corollary}
\newtheorem{proposition}[theorem]{Proposition}
\theoremstyle{definition}
\newtheorem{remark}[theorem]{Remark}
\newtheorem{definition}[theorem]{Definition}
\numberwithin{equation}{section}
\def\supp{{\mathop\mathrm{\,supp\,}}}
\def\dist{{\mathop\mathrm{\,dist\,}}}
\def\loc{{\mathop\mathrm{loc\,}}}
\def\lfz{\lfloor}
\def\rfz{\rfloor}
\numberwithin{equation}{section}
\begin{document}

\arraycolsep=1pt

\title{\bf\Large Lusin Area Function and Molecular Characterizations of
Musielak-Orlicz Hardy Spaces
and Their Applications\footnotetext {\hspace{-0.35cm}
2010 {\it Mathematics Subject Classification}. Primary: 42B25;
Secondary: 42B30, 42B35, 46E30.
\endgraf {\it Key words and phrases}. Musielak-Orlicz
function, Hardy space, atom, molecule,
Lusin area function, $\mathop\mathrm{BMO}$
space, Carleson measure.
\endgraf  Dachun Yang and Sibei Yang are partially supported
by 2010 Joint Research Project Between
China Scholarship Council and German Academic Exchange
Service (PPP) (Grant No. LiuJinOu [2010]6066).
Dachun Yang is also partially supported by the National
Natural Science Foundation  of China (Grant No. 11171027) and
the Specialized Research Fund for the Doctoral Program of Higher Education
of China (Grant No. 20120003110003). Part of this paper was finished
during the course of the visit of Dachun Yang and Sibei Yang to
the Mathematisches Institut of Friedrich-Schiller-Universit\"at Jena
and they want to express their sincere
and deep thanks for the gracious hospitality of the the Research group
``Function spaces" therein.}}
\author{Shaoxiong Hou, Dachun Yang\footnote{Corresponding author}\, \ and Sibei Yang}
\date{ }
\maketitle

\vspace{-0.5cm}

\begin{center}
\begin{minipage}{13.5cm}
{\small {\bf Abstract}\quad
Let $\varphi: \mathbb R^n\times
[0,\infty)\to[0,\infty)$ be a growth function such that
$\varphi(x,\cdot)$ is nondecreasing, $\varphi(x,0)=0$,
$\varphi(x,t)>0$ when $t>0$, $\lim_{t\to\infty}\varphi(x,t)=\infty$, and $\varphi(\cdot,t)$ is
a Muckenhoupt $A_\infty(\mathbb{R}^n)$ weight uniformly in $t$. In this paper, the
authors establish the Lusin
area function and the molecular characterizations of
the Musielak-Orlicz Hardy space $H_\varphi(\mathbb{R}^n)$
introduced by Luong Dang Ky via the grand maximal function.
As an application, the authors obtain
the $\varphi$-Carleson measure characterization of the Musielak-Orlicz
${\mathop\mathrm{BMO}}$-type space
$\mathop\mathrm{BMO}_{\varphi}(\mathbb{R}^n)$, which was proved to be the dual space
of $H_\varphi(\mathbb{R}^n)$ by Luong Dang Ky.}
\end{minipage}
\end{center}

\section{Introduction\label{s1}}

\hskip\parindent The real-variable theory of
Hardy spaces on the $n$-dimensional Euclidean space $\rn$ was originally
studied by Stein and Weiss \cite{sw60} and systematically
developed by Fefferman and Stein in a seminal paper \cite{fs72}.
Since the Hardy space $H^p(\rn)$ with $p\in(0,1]$ is, especially when studying
the boundedness of operators, a suitable
substitute of the Lebesgue space $L^p(\rn)$,
it plays an important role in various fields of
analysis and partial differential equations (see, for example, \cite{clms,st93}
and their references). In order to conveniently apply the real-variable theory
of $H^p(\rn)$ with $p\in(0,1]$,
their several equivalent characterizations
were revealed one after the other (see, for example, \cite{fs72,c74,l78,tw80}).
Among others a very important and useful
characterization of Hardy spaces is their atomic characterizations,
which were obtained by Coifman \cite{c74} when $n=1$ and
Latter \cite{l78} when $n>1$. Later, as an extension of this characterization,
the molecular characterization of Hardy spaces was established
by Taibleson and Weiss \cite{tw80}.

On the other hand, due to the need for more inclusive classes of function spaces
than the $L^p(\rn)$-families from applications,
Orlicz spaces were introduced by Birnbaum-Orlicz in \cite{bo31}
and Orlicz in \cite{o32}, which is widely used in various branches
of analysis (see, for example, \cite{rr91,rr00} and their references).
Moreover, as a development of the theory of
Orlicz spaces, Orlicz-Hardy spaces and their
dual spaces were studied by Str\"omberg \cite{s79}
and Janson \cite{ja80} on $\rn$ and,
quite recently, Orlicz-Hardy spaces associated with divergence form elliptic
operators by Jiang and Yang \cite{jy10}.

Furthermore, the classical $\bbmo$ space (the \emph{space
of functions with bounded mean oscillation}), originally
introduced by John and Nirenberg \cite{jn1}, plays an important
role in the study of partial differential equations and harmonic
analysis. In particular, Fefferman and Stein \cite{fs72} proved that
$\bbmo(\rn)$ is the dual space of $H^1 (\rn)$ and also obtained the Carleson
measure characterization of $\bbmo(\rn)$. Moreover, the generalized $\bbmo$-type
space $\bbmo_\rho(\rn)$ was studied in \cite{s79,ja80,hsv07} and it was proved
therein to be the dual space of the Orlicz-Hardy space
$H_\Phi(\rn)$, where the function $\Phi:\, [0,\fz)\to[0,\fz)$
satisfies the following assumptions:
\begin{equation}\label{o}
\Phi\ \text{is nondecreasing},\ \Phi(0)=0,\
\Phi(t)>0\ \text{when}\ t>0,\ \text{and}\ \lim_{t\to\fz}\Phi(t)=\fz,
\end{equation}
and $\rho(t):=t^{-1}/\Phi^{-1}(t^{-1})$ for all
$t\in(0,\fz)$. Here and in what follows, $\Phi^{-1}$ denotes the
\emph{inverse function} of $\Phi$. Observe that $\Phi$ may not be
convex and hence may not be an Orlicz function in the classical sense.
Meanwhile, the Carleson measure
characterization of $\bbmo_\rho(\rn)$ was obtained in \cite{hsv07}.

Recently, a new Musielak-Orlicz Hardy space
$H_{\fai}(\rn)$ was introduced by Ky \cite{k}, via the grand maximal
function, which includes both the Orlicz-Hardy space in
\cite{s79,ja80} and the
weighted Hardy space $H^p_\oz(\rn)$ with $p\in(0,1]$ and
$\oz\in A_{\fz}(\rn)$ in \cite{g79,st}. Here and in what follows,
$\fai:\,\rn\times[0,\fz)\to[0,\fz)$ is a growth function such
that $\fai(x,\cdot)$, for any fixed $x\in\rn$, satisfies \eqref{o} with uniformly upper type
1 and lower type $p$ for some $p\in(0,1]$ (see Section \ref{s2} for the
definitions of uniformly upper or lower types), and $\fai(\cdot,t)$ is
a Muckenhoupt $A_\fz(\rn)$ weight uniformly in $t$, and $A_q(\rn)$ with $q
\in[1,\fz]$ denotes the
\emph{class of Muckenhoupt weights} (see, for example,
\cite{gra1} for their definitions and properties).
In \cite{k}, Ky first established the atomic characterization of $H_{\fai}(\rn)$ and
further introduced the Musielak-Orlicz $\mathop\mathrm{BMO}$-type space
$\mathop\mathrm{BMO}_{\fai}(\rn)$, which was proved to be the dual
space of $H_{\fai}(\rn)$. Furthermore, some interesting applications of
these spaces were also presented in \cite{bfg10,bgk,bijz07,k,k2,k1}.
Moreover, the local Musielak-Orlicz Hardy space, $h_\fai(\rn)$, and its
dual space, $\bmo_\fai(\rn)$,
were studied in \cite{yys} and some applications of $h_\fai(\rn)$
and $\bmo_\fai(\rn)$, to pointwise multipliers of
$\bbmo$-type spaces and to
the boundedness of local Riesz transforms and pseudo-differential
operators on $h_\fai(\rn)$, were also obtained in \cite{yys}.
Recall that Musielak-Orlicz functions are the
natural generalization of Orlicz functions that may vary in the
spatial variables (see, for example, \cite{k,m83}).
Moreover, the motivation to study function spaces of Musielak-Orlicz type
is due to that they have wide
applications in several branches of physics and mathematics
(see, for example, \cite{bg10,bgk,bijz07,k,yys}
for more details).

Motivated by \cite{k,tw80}, in this paper, we establish the Lusin
area function and the molecular
characterizations of the Musielak-Orlicz Hardy space $H_\varphi(\mathbb{R}^n)$.
As an application, we obtain the $\varphi$-Carleson measure characterization
of the Musielak-Orlicz $\bbmo$-type space
$\mathop\mathrm{BMO}_{\varphi}(\mathbb{R}^n)$.

Precisely, this paper is organized as follows. In Section \ref{s2},
we recall some notions and examples of growth functions,
as well as their properties
established in \cite{k}.

In Section \ref{s3}, we first recall some notions about tent
spaces and then study the Musielak-Orlicz tent space $T_\fai(\rr^{n+1}_+)$.
The main target of this section is to establish the
atomic characterization of $T_\fai(\rr^{n+1}_+)$ (see Theorem \ref{t3.1} below).
As a byproduct, we show that, if $f\in T_\fai(\rr^{n+1}_+)\cap T_2^p(\rr^{n+1}_+)$
with $p\in(0,\fz)$, then the atomic decomposition of $f$ holds in both
$T_\fai(\rr^{n+1}_+)$ and $T_2^p(\rr^{n+1}_+)$, which plays an important role in
the remainder of this paper (see Corollary \ref{c3.1} below). Also, a subtle
observation on the atomic decomposition for functions in $T_\fai(\rr^{n+1}_+)$
is presented in Remark \ref{r3.x1} below, which is needed
in the proof of Proposition \ref{p4.2}
below.

In Section \ref{s4}, we introduce the Hardy-type spaces,
$H_{\fai,S}(\rn)$ and $H^{q,s,\uc}_{\fai,\mathrm{mol}}(\rn)$, respectively,
via the Lusin area function and the molecule, and then prove that the
operator $\pi_{\phi}$, which was first introduced in \cite{cms85} (see also
\eqref{4.2} below), maps the Musielak-Orlicz tent space $T_\fai(\rr^{n+1}_+)$
continuously into $H_{\fai,S}(\rn)$
(see Proposition \ref{p4.1} below). By this and the atomic
decomposition of $T_\fai(\rr^{n+1}_+)$, we conclude
that, for each $f\in H_{\fai,S}(\rn)$ vanishing weakly at infinity (see Section
\ref{s4} below for its definition),
there exists a molecular decomposition of $f$ holding in both $\cs'(\rn)$
(the space of Schwartz distributions) and $H_{\fai,S}(\rn)$
(see Proposition \ref{p4.2} below). Via this molecular decomposition
of $H_{\fai,S}(\rn)$ and the atomic characterization of
$H_\fai(\rn)$ established by Ky \cite{k}, we further obtain the Lusin area function
and the molecular characterizations of $H_\fai(\rn)$ (see Theorem \ref{t4.1} below).

In Section \ref{s5}, we first recall the definition of the Musielak-Orlicz
$\bbmo$-type space
$\mathrm{BMO}_{\fai}(\rn)$ and introduce the $\fai$-Carleson
measure. When $\fai$ further satisfies $nq(\fai)<(n+1)i(\fai)$ and $q(\fai)r(\fai)/
[r(\fai)-1]\in(1,2)$ (see \eqref{2.3}, \eqref{2.1} and \eqref{2.4} below,
respectively, for the definitions of $q(\fai)$, $i(\fai)$ and $r(\fai)$),
then in Theorem \ref{t5.1} below,
we establish the $\fai$-Carleson measure characterization
of $\mathrm{BMO}_{\fai}(\rn)$ by using the Lusin area function characterization
of $H_{\fai}(\rn)$ in Theorem \ref{t4.1} and an equivalent characterization of
the space $\bbmo_\fai(\rn)$ obtained in \cite[Theorem 2.7]{ly13} (see also Lemma
\ref{l5.1} below).

We remark that the method for obtaining the Lusin area function characterization
of $H_\fai(\rn)$ in this paper is different from the method used in \cite{fs}.
More precisely, in \cite{fs}, the Lusin area function
characterization of Hardy spaces was established by using the Calder\'on
reproducing formula and a subtle decomposition of all dyadic cubes
in $\rn$. However, in this paper,
we establish the Lusin area function characterization
of $H_\fai(\rn)$ by using the Calder\'on reproducing formula
(see \eqref{4.22} below), the atomic decomposition of the Musielak-Orlicz tent space
in Theorem \ref{t3.1} and some boundedness of the operator $\pi_\phi$ in Proposition
\ref{p4.1}. This method is closer to the method used in
\cite{cms85,jy10,sy10}. Moreover, different from
\cite{sy10}, we do not need the additional assumption
that, for any $t\in[0,\fz)$, $\fai(\cdot,t)$ satisfies
the reverse H\"older inequality of order $2$ (see Definition \ref{d2.1}
below for its definition), by
fully using the $L^p(\rn)$ boundedness of the Lusin area function $S$ for all $p\in(1,\fz)$.

By using the Lusin area function
characterization of $H_\fai(\rn)$ obtained in this article,
Liang, Huang and Yang \cite{lhy}, via establishing a
Musielak-Orlicz Fefferman-Stein vector-valued inequality,
further established the Littlewood-Paley $g$-function
and $g^\ast_\lz$-function characterizations of $H_\fai(\rn)$
with $\fai$ satisfying the same assumptions as in this article. Furthermore,
the characterizations of $H_\fai(\rn)$ in terms of the vertical and the non-tangential
maximal functions were also obtained in \cite{lhy}.

We also point out that the main results of this article,
including the Lusin area function and the molecular characterizations of
$H_\varphi(\mathbb{R}^n)$ and the $\fai$-Carleson measure characterization
of $\mathrm{BMO}_{\fai}(\rn)$, have local variants,
which will be studied in a forthcoming article (see \cite{yys} for
the definition of the local Musielak-Orlicz Hardy space $h_\fai(\rn)$).

Finally we make some conventions on notation. Throughout the whole
article, we denote by $C$ a \emph{positive constant} which is
independent of main parameters, but it may vary from line to
line. We also use $C(\gz,\bz,\ldots)$ to denote a \emph{positive
constant depending on the indicated parameters $\gz$, $\bz$,
$\ldots$}. The \emph{symbol} $A\ls B$ means that $A\le CB$. If
$A\ls B$ and $B\ls A$, then we write $A\sim B$. The  \emph{symbol}
$\lfz s\rfz$ for $s\in\rr$ denotes the maximal integer $k$ such that
$k\le s$. For any given normed spaces $\mathcal A$ and $\mathcal
B$ with the corresponding norms $\|\cdot\|_{\mathcal A}$ and
$\|\cdot\|_{\mathcal B}$, the \emph{symbol} ${\mathcal
A}\hookrightarrow{\mathcal B}$ means that, for all $f\in \mathcal A$, then
$f\in\mathcal B$ and $\|f\|_{\mathcal B}\ls \|f\|_{\mathcal A}$.
For any subset $E$ of $\rn$, we denote by $E^\complement$ the
\emph{set} $\rn\setminus E$ and by
$\chi_{E}$  its \emph{characteristic function}. We also set
$\nn:=\{1,\,2,\, \ldots\}$ and $\zz_+:=\{0\}\cup\nn$. For any
$\tz:=(\tz_{1},\ldots,\tz_{n})\in\zz_{+}^{n}$, let
$|\tz|:=\tz_{1}+\cdots+\tz_{n}$ and
$\partial^{\tz}_x:=\frac{\partial^{|\tz|}}{\partial
{x_{1}^{\tz_{1}}}\cdots\partial {x_{n}^{\tz_{n}}}}.$
For any index $q\in[1,\fz]$, we denote by $q'$ its
\emph{conjugate index}, namely, $1/q+1/q'=1$.

\section{Growth functions\label{s2}}

\hskip\parindent In this section, we first
recall some notions and assumptions on growth functions
considered in this article and give some examples which satisfy these assumptions.
We also recall some properties of growth functions established in \cite{k}.

Let $\Phi:\,[0,\fz)\to[0,\fz)$ be as in \eqref{o}. We say that the function $\Phi$ is of
\emph{upper type $p$} (resp. \emph{lower type $p$}) for some $p\in[0,\fz)$, if
there exists a positive constant $C$ such that, for all
$t\in[1,\fz)$ (resp. $t\in[0,1]$) and $s\in[0,\fz)$,
$\Phi(st)\le Ct^p \Phi(s).$

For a given function $\fai:\,\rn\times[0,\fz)\to[0,\fz)$ such that, for
any $x\in\rn$, $\fai(x,\cdot)$ satisfies \eqref{o},
we say that $\fai$ is of \emph{uniformly upper type $p$} (resp.
\emph{uniformly lower type $p$}) for some $p\in[0,\fz)$, if there
exists a positive constant $C$ such that, for all $x\in\rn$,
$t\in[0,\fz)$ and $s\in[1,\fz)$ (resp. $s\in[0,1]$), $\fai(x,st)\le Cs^p\fai(x,t)$.
We say that $\fai$ is of \emph{positive uniformly upper type}
(resp. \emph{uniformly lower type}), if it is of uniformly upper
type (resp. uniformly lower type) $p$ for some $p\in(0,\fz)$. Let
\begin{equation}\label{2.1}
i(\fai):=\sup\{p\in(0,\fz):\ \fai\ \text{is of uniformly lower
type}\ p\}.
\end{equation}
Observe that $i(\fai)$ may not be attainable, namely, $\fai$ may not be of uniformly
lower type $i(\fai)$; see below for some examples.

\begin{definition}\label{d2.1}
We say that the function $\fai(\cdot,t)$ satisfies the
\emph{uniformly Muckenhoupt condition for some $q\in[1,\fz)$},
denoted by $\fai\in\aa_q(\rn)$, if, when $q\in (1,\fz)$,
\begin{equation}\label{2.2}
\aa_q (\fai):=\sup_{t\in
(0,\fz)}\sup_{B\subset\rn}\frac{1}{|B|^q}\int_B
\fai(x,t)\,dx \lf\{\int_B
[\fai(y,t)]^{-q'/q}\,dy\r\}^{q/q'}<\fz,
\end{equation}
where $1/q+1/q'=1$, or
\begin{equation*}
\aa_1 (\fai):=\sup_{t\in (0,\fz)}
\sup_{B\subset\rn}\frac{1}{|B|}\int_B \fai(x,t)\,dx
\lf(\esup_{y\in B}[\fai(y,t)]^{-1}\r)<\fz.
\end{equation*}
Here the first supremums are taken over all $t\in(0,\fz)$ and the
second ones over all balls $B\subset\rn$.

We say that the function $\fai(\cdot,t)$ satisfies the
\emph{uniformly reverse H\"older condition for some
$q\in(1,\fz]$}, denoted by $\fai\in \rh_q(\rn)$, if, when $q\in
(1,\fz)$,
\begin{eqnarray*}
\rh_q (\fai):&&=\sup_{t\in (0,\fz)}\sup_{B\subset\rn}\lf\{\frac{1}
{|B|}\int_B [\fai(x,t)]^q\,dx\r\}^{1/q}\lf\{\frac{1}{|B|}\int_B
\fai(x,t)\,dx\r\}^{-1}<\fz,
\end{eqnarray*}
or
\begin{equation*}
\rh_{\fz} (\fai):=\sup_{t\in
(0,\fz)}\sup_{B\subset\rn}\lf\{\esup_{y\in
B}\fai(y,t)\r\}\lf\{\frac{1}{|B|}\int_B
\fai(x,t)\,dx\r\}^{-1} <\fz.
\end{equation*}
Here the first supremums are taken over all $t\in(0,\fz)$ and the
second ones over all balls $B\subset\rn$.
\end{definition}

Recall that, in Definition \ref{d2.1},
$\aa_q(\rn)$ with $q\in[1,\fz)$ was introduced by Ky \cite{k}.

Let $\aa_{\fz}(\rn):=\cup_{q\in[1,\fz)}\aa_{q}(\rn)$ and define
the \emph{critical indices} of $\fai\in\aa_{\fz}(\rn)$ as follows:
\begin{equation}\label{2.3}
q(\fai):=\inf\lf\{q\in[1,\fz):\ \fai\in\aa_{q}(\rn)\r\}
\end{equation}
and
\begin{equation}\label{2.4}
r(\fai):=\sup\lf\{q\in(1,\fz]:\ \fai\in\rh_{q}(\rn)\r\}.
\end{equation}

Now we introduce the notion of growth functions.

\begin{definition}\label{d2.2}
We say that a function $\fai:\rn\times[0,\fz)\rightarrow[0,\fz)$ is
a \emph{growth function}, if the following hold:
 \vspace{-0.25cm}
\begin{enumerate}
\item[(i)] $\fai$ is a \emph{Musielak-Orlicz function}, namely,
\vspace{-0.2cm}
\begin{enumerate}
    \item[(i)$_1$] the function $\fai(x,\cdot):[0,\fz)\to[0,\fz)$,
    for any fixed $x\in\rn$, satisfies \eqref{o};
    \vspace{-0.2cm}
    \item [(i)$_2$] the function $\fai(\cdot,t)$ is a measurable
    function for any fixed $t\in[0,\fz)$.
\end{enumerate}
\vspace{-0.25cm} \item[(ii)] $\fai\in \aa_{\fz}(\rn)$.
\vspace{-0.25cm} \item[(iii)] The function $\fai$ is of positive
uniformly lower type $p$ for some $p\in(0,1]$ and of uniformly
upper type 1.
\end{enumerate}
\end{definition}

Clearly, $\fai(x,t):=\oz(x)\Phi(t)$ is a growth function if
$\oz\in A_{\fz}(\rn)$ and $\Phi$ is as in \eqref{o} with lower
type $p$ for some $p\in(0,1]$ and upper type 1. It is known
that, for $p\in(0,1]$, if $\Phi(t):=t^p$ for all $t\in [0,\fz)$,
then $\Phi$ satisfies \eqref{o} and $\Phi$ is of lower type $p$ and also upper type $p$.
For $p\in[\frac{1}{2},1]$, if
$\Phi(t):= t^p/\ln(e+t)$ for all $t\in [0,\fz)$, then $\Phi$ satisfies \eqref{o}
and $\Phi$ is of lower type $q$ for $q\in(0, p)$ and of upper type $p$.
Observe that the same conclusions also hold true
for the function $\Phi(t):= t^p/\ln(c_p+t)$
for all $t\in [0,\fz)$ when $p\in(0,\frac{1}{2})$,
where $c_p$ is a positive constant large enough, depending on $p$, such that
$\Phi$ is nondecreasing on $[0,\fz)$.
For $p\in(0,1]$, if $\Phi(t):=t^p\ln(e+t)$ for all $t\in
[0,\fz)$, then $\Phi$ satisfies \eqref{o}
and $\Phi$ is of lower type $p$ and of upper type $q$
for $q\in(p,1]$. Recall that if a function satisfying \eqref{o} is of upper
type $p\in(0,1)$, then it is also of upper type 1.

Another typical and useful growth
function is
\begin{equation}\label{2.4a}
\fai(x,t):=\frac{t^{\az}}{[\ln(e+|x|)]^{\bz}+[\ln(e+t)]^{\gz}}
\end{equation}
for all $x\in\rn$ and $t\in[0,\fz)$ with any $\az\in(0,1]$,
$\bz\in[0,\fz)$ and $\gz\in [0,2\az(1+\ln2)]$; more precisely,
$\fai\in \aa_1(\rn)$, $\fai$ is of uniformly upper type $\az$ and
$i(\fai)=\az$ which is not attainable (see \cite{k}).
Observe also that, when $\gz\in (2\az(1+\ln2),\fz)$, the same conclusions
hold true for the function $\fai(x,t):=\frac{t^{\az}}{[\ln(e+|x|)]^{\bz}+[\ln(c_\gz+t)]^{\gz}}$
for all $x\in\rn$ and $t\in[0,\fz)$, where $c_\gz$ is a positive constant
large enough, depending on $\gz$, such that
$\fai$ is nondecreasing on the
time variables $t$.

Throughout the whole article, we always
assume that $\fai$ is a growth function as in Definition
\ref{d2.2}. Let us now introduce the Musielak-Orlicz space.

The \emph{Musielak-Orlicz space $L^{\fai}(\rn)$} is defined as the set
of all measurable functions $f$ such that
$\int_{\rn}\fai(x,|f(x)|)\,dx<\fz$ with the \emph{Luxembourg
norm}
$$\|f\|_{L^{\fai}(\rn)}:=\inf\lf\{\lz\in(0,\fz):\ \int_{\rn}
\fai\lf(x,\frac{|f(x)|}{\lz}\r)\,dx\le1\r\}.
$$
In what follows, for any measurable subset $E$ of $\rn$ and $t\in[0,\fz)$, we let
$$\fai(E,t):=\int_E\fai(x,t)\,dx.$$

The following lemma, which gives the
properties of growth functions,  is just \cite[Lemmas
4.1 and 4.2]{k}.

\begin{lemma}\label{l2.1}
{\rm(i)} Let $\fai$ be a growth function. Then $\fai$ is uniformly
$\sigma$-quasi-subadditive on $\rn\times[0,\fz)$, namely, there
exists a positive constant $C$ such that, for all
$(x,t_j)\in\rn\times[0,\fz)$ with $j\in\nn$,
$\fai(x,\sum_{j=1}^{\fz}t_j)\le C\sum_{j=1}^{\fz}\fai(x,t_j).$

{\rm(ii)} Let $\fai$ be a growth function and
$\wz{\fai}(x,t):=\int_0^t\frac{\fai(x,s)}{s}\,ds$ for all
$(x,t)\in\rn\times[0,\fz)$. Then $\wz{\fai}$ is a growth function,
which is equivalent to $\fai$; moreover, $\wz{\fai}(x,\cdot)$ is
continuous and strictly increasing.

{\rm (iii)} Let $\fai$ be a growth function. Then $\int_{\mathbb
R^n}\varphi(x,\frac{|f(x)|}{\|f\|_{L^\varphi(\rn)}})\,dx=1$ for all $f\in
L^\varphi(\mathbb R^n)\setminus\{0\}$.

\end{lemma}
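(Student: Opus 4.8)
The statement to prove is Lemma~\ref{l2.1}, which collects three properties of growth functions from \cite{k}. Since the excerpt explicitly says this is \cite[Lemmas 4.1 and 4.2]{k}, the proof proposal should sketch how one recovers these properties from the definitions, reducing everything to the one-variable Orlicz theory applied uniformly in $x$.

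The plan is as follows. For part (i), the key point is that uniform $\sigma$-quasi-subadditivity is a consequence of the uniformly lower type $p$ property together with $\fai(x,\cdot)$ being nondecreasing. First I would fix $x\in\rn$ and $\{t_j\}_{j\in\nn}\subset[0,\fz)$, set $T:=\sum_{j=1}^\fz t_j$ (assuming $T\in(0,\fz)$, the cases $T=0$ and $T=\fz$ being trivial or handled by monotone convergence), and write $t_j = (t_j/T)\,T$ with $t_j/T\in[0,1]$. Applying the uniformly lower type $p$ estimate $\fai(x,st)\le C s^p\fai(x,t)$ with $s=t_j/T\le 1$ and $t=T$ gives $\fai(x,t_j)\le C (t_j/T)^p \fai(x,T)$. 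Summing over $j$ and using $\sum_j (t_j/T)^p\ge \sum_j (t_j/T)=1$ (since $p\le 1$ and $0\le t_j/T\le 1$ force $(t_j/T)^p\ge t_j/T$), I obtain $\fai(x,T)\le C\sum_{j=1}^\fz \fai(x,t_j)$, with $C$ independent of $x$. This is exactly the claimed uniform $\sigma$-quasi-subadditivity.

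For part (ii), define $\wz\fai(x,t):=\int_0^t \fai(x,s)/s\,ds$. I would verify the growth-function axioms of Definition~\ref{d2.2} one at a time. Measurability of $\wz\fai(\cdot,t)$ follows from Fubini/Tonelli applied to the measurable integrand. That $\wz\fai(x,\cdot)$ is an Orlicz function, continuous and strictly increasing, is immediate once one knows the integral converges: convergence near $0$ uses the uniformly lower type $p$ bound $\fai(x,s)/s\le C s^{p-1}\fai(x,1)$ with $p>0$ (integrable near $0$), and finiteness on bounded intervals is clear; strict monotonicity and continuity follow from the integrand being positive and locally integrable. The equivalence $\wz\fai\sim\fai$ is the crucial estimate: using uniform upper type $1$, for $s\in[0,t]$ one has $\fai(x,s)=\fai(x,(s/t)t)\le C(s/t)\fai(x,t)$ when $s/t\le 1$ wait — upper type gives the bound only for the scaling factor $\ge 1$; instead I use uniform lower type to compare below and a change of variables $s=ut$ to get $\wz\fai(x,t)=\int_0^1 \fai(x,ut)/u\,du$, then uniform lower type $p$ bounds this above by $C\fai(x,t)\int_0^1 u^{p-1}\,du = (C/p)\fai(x,t)$, while restricting the integral to $u\in[1/2,1]$ and using monotonicity (or uniform lower type downward) bounds it below by $c\fai(x,t/2)\gtrsim \fai(x,t)$ again by uniform lower type. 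This establishes $\wz\fai\sim\fai$ with constants independent of $x$, hence $\wz\fai\in\aa_\fz(\rn)$ and inherits the type conditions, so $\wz\fai$ is a growth function. Part (iii) then follows quickly: by (ii) one may assume $\fai(x,\cdot)$ is continuous and strictly increasing (replacing $\fai$ by $\wz\fai$, which changes norms only up to equivalence — though for the \emph{exact} identity $=1$ one argues directly); the function $\lz\mapsto\int_\rn\fai(x,|f(x)|/\lz)\,dx$ is continuous and strictly decreasing on $(0,\fz)$ by dominated convergence (dominated using the $\sigma$-quasi-subadditivity and monotonicity from (i)) with limits $\fz$ and $0$ at the endpoints, so the infimum defining $\|f\|_{L^\fai(\rn)}$ is attained and the integral equals exactly $1$ there.

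The main obstacle I anticipate is the equivalence $\wz\fai\sim\fai$ in part (ii), specifically getting the lower bound $\wz\fai(x,t)\gtrsim\fai(x,t)$ uniformly in $x$: the upper bound is a routine application of uniform lower type, but the lower bound requires extracting a definite fraction of the mass of the integral, which forces one to combine monotonicity of $\fai(x,\cdot)$ with a uniform type estimate in the ``wrong'' direction and keep careful track that the resulting constant does not depend on $x$. A secondary subtlety is in part (iii), where the passage from ``the infimum is attained'' to ``the integral equals $1$'' needs the strict monotonicity of $\lz\mapsto\int_\rn\fai(x,|f(x)|/\lz)\,dx$, which in turn relies on $\fai(x,\cdot)$ being genuinely (strictly) increasing on the relevant range for a.e.\ $x$ where $f(x)\neq0$; this is where invoking part (ii) to pass to the equivalent $\wz\fai$ is convenient, but one must confirm the Luxembourg norm computed with $\fai$ itself still yields the exact value $1$, which follows because for this $f$ the set where the integrand could be constant in $\lz$ has measure zero. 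All other verifications are bookkeeping with the definitions and standard convergence theorems.
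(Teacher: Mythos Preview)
Your argument for part~(i) contains a genuine error: you apply the uniformly \emph{lower} type $p$ inequality, but the resulting chain of inequalities runs the wrong way. Concretely, lower type $p$ with $s=t_j/T\le1$ gives
\[
\fai(x,t_j)\le C\,(t_j/T)^p\,\fai(x,T),
\]
which after summing yields
\[
\sum_j\fai(x,t_j)\le C\,\fai(x,T)\sum_j(t_j/T)^p.
\]
The observation $\sum_j(t_j/T)^p\ge1$ only makes the right-hand side \emph{larger}; it gives no upper bound on $\fai(x,T)$ whatsoever. (Indeed, $\sum_j(t_j/T)^p$ can be arbitrarily large, e.g.\ when many $t_j$ are equal and small.) The correct route is the uniformly \emph{upper} type~$1$ property: for each $j$ with $t_j>0$ apply it with $s=T/t_j\ge1$ to get $\fai(x,T)\le C\,(T/t_j)\,\fai(x,t_j)$, i.e.\ $t_j\,\fai(x,T)\le C\,T\,\fai(x,t_j)$; summing over $j$ and dividing by $T$ gives the desired $\fai(x,T)\le C\sum_j\fai(x,t_j)$.

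A smaller slip of the same nature occurs in part~(ii): the lower bound $\fai(x,t/2)\gtrsim\fai(x,t)$ does \emph{not} follow from lower type $p$ (which would give $\fai(x,t/2)\le C\,2^{-p}\fai(x,t)$, again the wrong direction), but from upper type~$1$: $\fai(x,t)=\fai(x,2\cdot t/2)\le 2C\,\fai(x,t/2)$. With that correction, your proof of (ii) is fine. Your sketch of (iii) is acceptable; note that the paper simply cites \cite[Lemmas~4.1 and~4.2]{k} for all three parts without giving its own proof, so there is nothing further to compare.
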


We have the following properties for $\aa_\fz(\rn)$, whose proofs
are similar to those in \cite{gra1}, the details being omitted.

\begin{lemma}\label{l2.4}
$\mathrm{(i)}$ $\aa_1(\rn)\subset\aa_p(\rn)\subset\aa_q(\rn)$ for
$1\le p\le q<\fz$.

$\mathrm{(ii)}$ $\rh_{\fz}(\rn)\subset\rh_p(\rn)\subset\rh_q(\rn)$
for $1<q\le p\le\fz$.

$\mathrm{(iii)}$ If $\fai\in\aa_p(\rn)$ with $p\in(1,\fz)$, then
there exists $q\in(1,p)$ such that $\fai\in\aa_q(\rn)$.

$\mathrm{(iv)}$ $\aa_{\fz}(\rn)=\cup_{p\in[1,\fz)}\aa_p(\rn)
=\cup_{q\in(1,\fz]}\rh_q(\rn)$.

$\mathrm{(v)}$ If $p\in(1,\fz)$ and $\fai\in \aa_{p}(\rn)$, then
there exists a positive constant $C$ such that, for all measurable
functions $f$ on $\rn$ and $t\in[0,\fz)$,
$$\int_{\rn}\lf[\cm(f)(x)\r]^p\fai(x,t)\,dx\le
C\int_{\rn}|f(x)|^p\fai(x,t)\,dx,$$ where $\cm$ denotes the
Hardy-Littlewood maximal function on $\rn$, defined by setting, for all $x\in\rn$,
$$\cm(f)(x):=\sup_{x\in B}\frac{1}{|B|}\int_B|f(y)|\,dy,$$
where the supremum is taken over all balls $B\ni x$.

$\mathrm{(vi)}$ If $\fai\in \aa_{p}(\rn)$ with $p\in[1,\fz)$, then
there exists a positive constant $C$ such that, for all balls
$B_1,\,B_2\subset\rn$ with $B_1\subset B_2$ and $t\in[0,\fz)$,
$\frac{\fai(B_2,t)}{\fai(B_1,t)}\le
C[\frac{|B_2|}{|B_1|}]^p.$

$\mathrm{(vii)}$ If $\fai\in \rh_{q}(\rn)$ with $q\in(1,\fz]$,
then there exists a positive constant $C$ such that, for all balls
$B_1,\,B_2\subset\rn$ with $B_1\subset B_2$ and $t\in[0,\fz)$,
$\frac{\fai(B_2,t)}{\fai(B_1,t)}\ge
C[\frac{|B_2|}{|B_1|}]^{(q-1)/q}.$
\end{lemma}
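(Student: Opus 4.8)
The plan is to reduce each assertion to the corresponding classical fact about scalar Muckenhoupt weights, applied uniformly in the parameter $t\in[0,\fz)$, using the crucial observation that the quantities $\aa_q(\fai)$, $\rh_q(\fai)$ defined in Definition \ref{d2.1} are precisely the suprema over $t$ of the classical $A_q$-constants $A_q(\fai(\cdot,t))$ and $RH_q$-constants of the weights $\fai(\cdot,t)$. Thus every inequality that holds for a single $A_p$ weight with a constant depending only on its $A_p$-norm will, upon taking the supremum over $t$, yield the uniform version stated here. I would begin by recording this dictionary explicitly, since it is what makes the whole lemma routine.

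For (i) and (ii): these are the nesting properties $A_1\subset A_p\subset A_q$ and $RH_\fz\subset RH_p\subset RH_q$ for the scalar classes, proved by Hölder's inequality on the definition \eqref{2.2} and its reverse-Hölder analogue; the point is merely that Hölder's inequality at each fixed $t$ produces a bound independent of $t$, so one may pass to the supremum. For (iii), the self-improvement property $\fai\in\aa_p(\rn)\Rightarrow\fai\in\aa_q(\rn)$ for some $q<p$: the classical proof (via a reverse Hölder inequality for the weight, or via the $A_p$ characterization by the maximal function) gives the exponent $q$ and the new constant depending only on $n$, $p$ and $\aa_p(\fai)$, \emph{not} on the individual weight; hence the same $q$ works simultaneously for all $\fai(\cdot,t)$ and one takes the supremum over $t$ at the end. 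For (iv), the equality $\aa_\fz(\rn)=\cup_p\aa_p(\rn)=\cup_q\rh_q(\rn)$ follows from (iii) together with the classical fact that a weight lies in some $A_p$ if and only if it satisfies a reverse Hölder inequality of some order, again with quantitative control uniform in the weight; the definition $\aa_\fz(\rn):=\cup_{q\in[1,\fz)}\aa_q(\rn)$ is already given before \eqref{2.3}, so only the second equality needs argument.

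For (v): fix $t$ and apply the classical weighted $L^p$ boundedness of the Hardy--Littlewood maximal function for the $A_p$ weight $\fai(\cdot,t)$, whose operator norm depends only on $n$, $p$ and $A_p(\fai(\cdot,t))\le\aa_p(\fai)$; since this bound is independent of $t$, the stated inequality holds with a single constant $C$. For (vi): this is the doubling-type comparison $\frac{w(B_2)}{w(B_1)}\le C\,(|B_2|/|B_1|)^p$ valid for any $w\in A_p$ with $C=C(n,A_p(w))$, obtained by testing the $A_p$ condition on $B_2$ against the set $B_1$; uniform in $t$ as before. For (vii): dually, an $RH_q$ weight satisfies $\frac{w(B_2)}{w(B_1)}\ge C\,(|B_2|/|B_1|)^{(q-1)/q}$, which one derives from the reverse Hölder inequality on $B_2$ combined with Hölder's inequality on $B_1$; I would spell this one out since the exponent $(q-1)/q$ is the only slightly non-obvious bookkeeping.

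The main (and only) obstacle is conceptual rather than technical: one must verify that in each classical proof the relevant constant is \emph{quantitative}, i.e. depends on the weight solely through its $A_p$ or $RH_q$ norm and not otherwise, so that the passage to $\sup_{t\in[0,\fz)}$ is legitimate. Once this uniformity is checked — and it holds in all the standard proofs, e.g. those in \cite{gr1,gra1} — the lemma is immediate. I would therefore present the proof as: (1) state the dictionary between $\aa_q(\fai)$, $\rh_q(\fai)$ and the scalar $A_q$, $RH_q$ constants; (2) invoke the corresponding scalar results from \cite{gr1,gra1} with their quantitative constants; (3) take the supremum over $t$, remarking only on (vii) in detail.
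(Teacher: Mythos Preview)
Your proposal is correct and matches the paper's own treatment: the paper does not give a proof at all, stating only that the proofs ``are similar to those in \cite{gr1,gra1}'', which is precisely your dictionary-plus-uniformity argument. Your explicit identification of the key point --- that the classical constants depend on the weight only through its $A_p$ or $RH_q$ norm, so one may take the supremum over $t$ --- is exactly the content the paper is leaving implicit.
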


\section{Musielak-Orlicz tent spaces\label{s3}}

\hskip\parindent In this section, we study the tent spaces
associated with the growth function $\fai$ as in
Definition \ref{d2.2}. We first recall some notations as follows.

Let $\rr^{n+1}_+:=\rn\times(0,\fz)$. For any $x\in\rn$, let
$$\bgz(x):=\{(y,t)\in\rr^{n+1}_+:\ |x-y|<t\}
$$
be the \emph{cone} of aperture 1 with vertex $x\in\rn$. For any
closed set $F$ of $\rn$, denote by $\ccr F$ the union of all
cones with vertices in $F$ (namely, $\ccr F:=\cup_{x\in
F}\bgz(x)$) and, for any open set $O$ in $\rn$, the tent over
$O$ by $\widehat{O}$, which is defined as
$\widehat{O}:=[\ccr(O^\complement)]^{\complement}$. It is
easy to see that
$$\widehat{O}=\lf\{(x,t)\in\rr^{n+1}_+:\ d(x,O^\complement)
\ge t\r\}.$$

For all measurable functions $g$ on $\rr^{n+1}_+$ and $x\in\rn$, define
$$\ca(g)(x):=\lf\{\int_{\bgz(x)}|g(y,t)|^2
\frac{dy\,dt}{t^{n+1}}\r\}^{1/2}.
$$
We remark that Coifman, Meyer and Stein \cite{cms85} studied the
tent space $T^p_2(\rr^{n+1}_+)$ for $p\in(0,\fz)$. Recall that ones say that a
measurable function $g$ is in the \emph{tent space} $T^p_2(\rr^{n+1}_+)$ with $p\in(0,\fz)$,
if $\|g\|_{T^p_2(\rr^{n+1}_+)}:=\|\ca(g)\|_{L^p(\rn)}<\fz$. Moreover,
the tent space $T_{\bfai}(\rr^{n+1}_+)$ associated with the
function $\bfai$ satisfying \eqref{o} was studied in \cite{hsv07,jy10}.

Let $\fai$ be as in Definition \ref{d2.2}. In what follows, we denote by
$T_{\fai}(\rr^{n+1}_+)$ the space of all measurable functions
$g$ on $\rr^{n+1}_+$ such that $\ca(g)\in L^{\fai}(\rn)$ and, for any
$g\in T_{\fai}(\rr^{n+1}_+)$, we define its \emph{quasi-norm} by
$$\|g\|_{T_{\fai}(\rr^{n+1}_+)}:=\|\ca(g)\|_{L^{\fai}(\rn)}=
\inf\lf\{\lz\in(0,\fz):\
\int_{\rn}\fai\lf(x,\frac{\ca(g)(x)}{\lz}\r)\,dx\le1\r\}.$$

Let $p\in(1,\fz)$. We say that a function $a$ on $\rr^{n+1}_+$ is a
\emph{$(\fai,\,p)$-atom}, if

(i) there exists a ball $B\subset\rn$ such that $\supp(a)\subset\widehat{B}$;

(ii) $\|a\|_{T^p_2(\rr^{n+1}_+)}\le
|B|^{1/p}\|\chi_B\|_{L^\fai(\rn)}^{-1}$.

Furthermore, if $a$ is a $(\fai,p)$-atom for all $p\in (1,\fz)$, we then
say that $a$ is a \emph{$(\fai,\fz)$-atom}.

For $(\fai,\fz)$-atoms, we have the following conclusion.

\begin{lemma}\label{l3.1a}
Let $\fai$ be as in Definition \ref{d2.2}. Then for any $(\fai,\fz)$-atom $a$,
it holds that $a\in T_{\fai}(\rr^{n+1}_+)$. Moreover, there exists a positive constant
$C$ such that, for any $(\fai,\fz)$-atom $a$ with $\supp(a)\subset\widehat B$
and any $\lz\in(0,\fz)$,
\begin{equation}\label{3.1a}
\int_{\rn}\fai\lf(x,\frac{\ca(a)(x)}{\lz}\r)\,dx
\le C\fai\lf(B,\frac{1}
{\lz\|\chi_{B}\|_{L^\fai(\rn)}}\r)
\end{equation}
and, in particular, there exists a positive constant
$\wz{C}$, depending only on $C$, such that $\|a\|_{T_\fai(\rr^{n+1}_+)}\le \wz C.$
\end{lemma}

\begin{proof}
Let $a$ be as in Lemma \ref{l3.1a}.
Assume first that \eqref{3.1a} holds for a moment. By \eqref{3.1a} with $\lz=1$ and
Lemma \ref{l2.1}(iii), we see that there exists a positive constant
$\wz{C}$, depending only on the constant $C$ in \eqref{3.1a}, such that
\begin{equation*}
\int_\rn\fai\lf(x,\frac{\ca(a)(x)}{\wz{C}}\r)\,dx\le1,
\end{equation*}
which implies that $a\in T_{\fai}(\rr^{n+1}_+)$ and
$\|a\|_{T_\fai(\rr^{n+1}_+)}\le \wz{C}.$

Now we show \eqref{3.1a}. To this end, by $\supp (a)\subset\widehat{B}$, we see that
$\supp(\ca(a))\subset B$. Furthermore, by $\fai\in
\aa_{\fz}(\rn)$ and Lemma \ref{l2.4}(iv), we know that there exists $q_0\in(1,\fz)$
such that $\fai\in\rh_{q_0}(\rn)$. From this, the uniformly upper type 1
property of $\fai$, H\"older's inequality and that $a$ is a $(\fai,\,\fz)$-atom,
we deduce that
\begin{eqnarray*}
&&\int_{\rn}\fai\lf(x,\frac{\ca(a)(x)}{\lz}\r)\,dx\\
&&\hs\ls\int_{B}\lf[1+\ca(a)(x)\|\chi_B\|_{L^\fai(\rn)}\r]
\fai\lf(x,\frac{1}{\lz\|\chi_B\|_{L^\fai(\rn)}}\r)\,dx\\
&&\hs\ls\fai\lf(B,\frac{1}{\lz\|\chi_B\|_{L^\fai(\rn)}}\r)+\lf\{\int_{
B}\lf[\ca(a)(x)\r]^{q_0'}\,dx\r\}^{1/q_0'}
\|\chi_B\|_{L^\fai(\rn)}\\
&&\hs\hs\times\lf\{\int_B\lf[\fai\lf(x,\frac{1}{\lz\|\chi_B
\|_{L^\fai(\rn)}}\r)\r]^{q_0}\,dx\r\}^{1/q_0}\\
&&\hs\ls\fai\lf(B,\frac{1}{\lz\|\chi_B\|_{L^\fai(\rn)}}\r)
+\|a\|_{T^{q_0'}_2(\rr^{n+1}_+)}\|\chi_B\|_{L^\fai(\rn)}\\
&&\hs\hs\times|B|^{-1/q_0'}\fai\lf(B,\frac{1}{\lz\|\chi_B\|_{L^\fai(\rn)}}\r)\ls
\fai\lf(B,\frac{1}{\lz\|\chi_B\|_{L^\fai(\rn)}}\r).
\end{eqnarray*}
Thus, \eqref{3.1a} holds true, which completes
the proof of Lemma \ref{l3.1a}.
\end{proof}

For functions in the space $T_{\fai}(\rr^{n+1}_+)$, we have the following
atomic decomposition.

\begin{theorem}\label{t3.1}
Let $\fai$ be as in Definition \ref{d2.2}. Then $f\in
T_{\fai}(\rr^{n+1}_+)$ if and only if there exist $\{\lz_j\}_j\subset\cc$ and a sequence
$\{a_j\}_j$ of $(\fai,\fz)$-atoms such that, for almost every $(x,t)\in\rr^{n+1}_+$,
\begin{equation}\label{3.1}
f(x,t)=\sum_{j}\lz_ja_j(x,t)
\end{equation}
and
\begin{equation}\label{3.x1}
\sum_j\fai\lf(B_j,
|\lz_j|\|\chi_{B_j}\|^{-1}_{L^\fai(\rn)}\r)<\fz,
\end{equation}
where, for each $j$, $\widehat{B_j}$ appears in the support of $a_j$.
Moreover, there exists a positive constant $C$ such that, for all
$f\in T_{\fai}(\rr^{n+1}_+)$,
\begin{eqnarray}\label{3.2}
\qquad\ \blz(\{\lz_j a_j\}_j):=\inf\lf\{\lz\in(0,\fz):\
\sum_j\fai\lf(B_j,\frac{|\lz_j|}
{\lz\|\chi_{B_j}\|_{L^\fai(\rn)}}\r)\le1\r\}\sim
\|f\|_{T_{\fai}(\rr^{n+1}_+)},
\end{eqnarray}
where the implicit constants are independent of $f$.
\end{theorem}

The proof of Theorem \ref{t3.1} is similar to that of
\cite[Theorem 1(a)]{cms85} or \cite[Theorem 3, p.\,64]{st93}.
To give the details, we need some known facts as follows.

Let $F$ be a closed subset of $\rn$ and $O:=F^\complement$. Assume that $|O|<\fz$.
For any fixed $\gz\in(0,1)$, we say that $x\in\rn$ has the \emph{global $\gz$-density}
with respect to $F$ if, for all $r\in(0,\fz)$,
$$\frac{|B(x,r)\cap F|}{|B(x,r)|}\ge\gz.$$
Denote by $F_\gz^{\ast}$ the set of all such $x$. It is easy to prove
that $F_\gz^{\ast}$ with $\gz\in(0,1)$ is a closed subset of $F$. Let $\gz\in(0,1)$
and $O_\gz^{\ast}:=(F_\gz^{\ast})^\complement$. Then $O_\gz^{\ast}$ is open and
$O\subset O_\gz^{\ast}$. Indeed, from the definition of $O_\gz^{\ast}$, we deduce
that
\begin{equation}\label{3.1x}
O_\gz^\ast=\{x\in\rn:\
\cm(\chi_O)(x)>1-\gz\},
\end{equation}
which, together with the fact that $\cm$ is of weak type $(1,1)$, further implies that
there exists a positive constant $C(\gz)$, depending on $\gz$, such
that $|O_\gz^\ast|\le C(\gz)|O|$.

The following lemma is just \cite[Lemma 3.1]{jy10}.

\begin{lemma}\label{l3.1}
There exist positive constants
$\gz\in(0, 1)$ and $C(\gz)$ such that, for any closed subset $F$
of $\rn$ whose complement has finite measure,
and any nonnegative measurable function $H$ on $\rr^{n+1}_+$, it holds that
$$\int_{\ccr(F^\ast_\gz)}H(y,t)t^n\,dy\,dt\le
C(\gz)\int_F\lf\{\int_{\bgz(x)}H(y,t)\,dy\,dt\r\}\,dx,$$
where $F^\ast_\gz$ denotes the set of points in $\rn$ with the global
$\gz$-density with respect to $F$.
\end{lemma}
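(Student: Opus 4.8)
\textbf{Proof plan for Lemma \ref{l3.1}.}

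The plan is to reduce the claimed inequality to a pointwise comparison of the integrand's "weight" $t^n$ against the measure of the slice $\{x\in F:\ (y,t)\in\bgz_\eta(x)\}$, and then apply Fubini's theorem. First I would fix $(y,t)\in\ccr_\nu(F^\ast_\gz)$; by definition this means there is a point $z\in F^\ast_\gz$ with $|z-y|<\nu t$. The geometric heart of the argument is the claim that, for a suitable choice of $\gz\in(0,1)$ depending only on $n$, $\nu$ and $\eta$, one has
\begin{equation*}
\lf|\lf\{x\in F:\ |x-y|<\eta t\r\}\r|\gtrsim t^n,
\end{equation*}
with implicit constant depending only on $n,\nu,\eta$. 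Granting this, the right-hand side of the asserted inequality can be rewritten, via the indicator identity $\chi_{\bgz_\eta(x)}(y,t)=\chi_{\{|x-y|<\eta t\}}(x)$ and Tonelli's theorem, as
\begin{equation*}
\int_F\lf\{\int_{\bgz_\eta(x)}H(y,t)\,dy\,dt\r\}\,dx
=\int_{\rr^{n+1}_+}H(y,t)\lf|\lf\{x\in F:\ |x-y|<\eta t\r\}\r|\,dy\,dt,
\end{equation*}
and since $H\ge0$, restricting the outer integral to $\ccr_\nu(F^\ast_\gz)$ and invoking the geometric claim gives
\begin{equation*}
\int_{\ccr_\nu(F^\ast_\gz)}H(y,t)\,t^n\,dy\,dt
\lesssim\int_{\ccr_\nu(F^\ast_\gz)}H(y,t)\lf|\lf\{x\in F:\ |x-y|<\eta t\r\}\r|\,dy\,dt
\le\int_F\lf\{\int_{\bgz_\eta(x)}H(y,t)\,dy\,dt\r\}\,dx,
\end{equation*}
which is exactly the assertion.

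It remains to prove the geometric claim, and this is the main obstacle. The idea is that the global $\gz$-density of $z$ with respect to $F$ forces $F$ to occupy a definite proportion of every ball centered at $z$, in particular of the ball $B(z,(\eta-\nu)t)$ if we first arrange $\nu<\eta$ (there is no loss: if $\nu\ge\eta$ one shrinks $\nu$, since $\ccr_\nu(F^\ast_\gz)$ only gets smaller, though one must be a little careful as $F^\ast_\gz$ also depends on nothing but $F$ and $\gz$, so this monotonicity in $\nu$ is legitimate). Concretely, by the density property applied at radius $r=(\eta-\nu)t$,
\begin{equation*}
|B(z,(\eta-\nu)t)\cap F|\ge\gz|B(z,(\eta-\nu)t)|=\gz\,\omega_n[(\eta-\nu)t]^n,
\end{equation*}
where $\omega_n:=|B(0,1)|$. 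On the other hand, the triangle inequality $|x-y|\le|x-z|+|z-y|<(\eta-\nu)t+\nu t=\eta t$ shows that $B(z,(\eta-\nu)t)\subset\{x:\ |x-y|<\eta t\}$; hence
\begin{equation*}
\lf|\lf\{x\in F:\ |x-y|<\eta t\r\}\r|\ge|B(z,(\eta-\nu)t)\cap F|\ge\gz\,\omega_n(\eta-\nu)^n\,t^n,
\end{equation*}
which is the claim with $C(\gz,\nu,\eta):=[\gz\,\omega_n(\eta-\nu)^n]^{-1}$. The role of $\gz$ is thus only to be some fixed number in $(0,1)$ (any choice works, e.g. $\gz=1/2$); the dependence of the final constant on $\gz,\nu,\eta$ is as stated.

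A few technical points I would check along the way: the measurability of $(y,t)\mapsto|\{x\in F:\ |x-y|<\eta t\}|$, needed to justify Tonelli — this follows since $(x,y,t)\mapsto\chi_F(x)\chi_{\{|x-y|<\eta t\}}$ is jointly measurable; the finiteness of $|O|$ is used only to guarantee (via the weak-$(1,1)$ bound for $\cm$, as recalled just before the lemma) that $F^\ast_\gz$ is a genuine proper closed subset with $|O^\ast_\gz|<\fz$, so that both sides are well-defined and the standard tent-space machinery applies, though strictly speaking the inequality above holds regardless; and the reduction $\nu<\eta$, which I would state explicitly at the outset since the constant blows up as $\nu\uparrow\eta$. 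I expect the density-to-volume step to be the only nontrivial ingredient; everything else is Fubini and the triangle inequality.
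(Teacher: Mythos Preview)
The paper does not prove this lemma; it simply cites it as \cite[Lemma 3.1]{jy10}. Your Fubini-plus-density strategy is exactly the standard one, and when $\nu<\eta$ your argument is correct as written: any $\gz\in(0,1)$ works, with $C(\gz,\nu,\eta)=[\gz\,\oz_n(\eta-\nu)^n]^{-1}$.

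However, your reduction of the general case to $\nu<\eta$ is backwards. You write that ``if $\nu\ge\eta$ one shrinks $\nu$, since $\ccr_\nu(F^\ast_\gz)$ only gets smaller,'' but shrinking $\nu$ makes the \emph{left}-hand side smaller, so the inequality for the smaller aperture does not imply the inequality for the original $\nu$. In particular, the paper's own application of this lemma (in the proof of Theorem \ref{t3.1}) uses $\nu=\eta=1$, which your argument does not cover. Relatedly, your claim that ``any $\gz$ works, e.g.\ $\gz=1/2$'' is false when $\nu\ge\eta$.

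The fix is to let $\gz$ depend on $\nu$ and $\eta$, as the lemma's statement explicitly allows. Given $(y,t)\in\ccr_\nu(F^\ast_\gz)$ and $z\in F^\ast_\gz$ with $|z-y|<\nu t$, observe that $B(y,\eta t)\subset B(z,(\nu+\eta)t)$, so
\[
\lf|B(y,\eta t)\setminus F\r|\le\lf|B(z,(\nu+\eta)t)\setminus F\r|\le(1-\gz)\,\oz_n(\nu+\eta)^n t^n,
\]
and hence
\[
\lf|B(y,\eta t)\cap F\r|\ge\oz_n\eta^n t^n-(1-\gz)\,\oz_n(\nu+\eta)^n t^n.
\]
Choosing, say, $\gz:=1-\tfrac12\lf(\frac{\eta}{\nu+\eta}\r)^n$ makes the right side equal to $\tfrac12\oz_n\eta^n t^n$, and your Tonelli argument then goes through verbatim with $C(\gz,\nu,\eta)=2/(\oz_n\eta^n)$. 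This handles all $\nu,\eta\in(0,\fz)$ uniformly and shows precisely why the lemma is phrased with $\gz$ as part of the conclusion rather than as a free parameter.
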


Moreover, we also need the following lemma, whose proof
is similar to that of \cite[Lemma 5.4]{k}, the  details being omitted.

\begin{lemma}\label{l3.2}
Let $\fai$ be as in Definition \ref{d2.2}, $f\in T_{\fai}(\rr^{n+1}_+)$, $k\in\zz$ and
$$\boz_k:=\lf\{x\in\rn:\ \ca(f)(x)>2^k\r\}.$$
 Then there exists a positive constant $C$ such that, for all $\lz\in(0,\fz)$,
$$\sum_{k\in\zz}\fai\lf(\boz_k,\frac{2^k}{\lz}\r)\le
C\int_{\rn}\fai\lf(x,\frac{\ca(f)(x)}{\lz}\r)\,dx.
$$
\end{lemma}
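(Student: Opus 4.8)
The plan is to prove Lemma \ref{l3.2} by adapting the argument of \cite[Lemma 5.4]{k}, exploiting the good-$\lambda$ type relationship between the level sets $\boz_k$ of $\ca(f)$ and the tents built over them, together with Lemma \ref{l3.1}. First I would fix $\lz\in(0,\fz)$ and, for each $k\in\zz$, set $\boz_k:=\{x\in\rn:\ \ca(f)(x)>2^k\}$. Since $f\in T_{\fai}(\rr^{n+1}_+)$ means $\ca(f)\in L^{\fai}(\rn)$, each $\boz_k$ is an open set of finite measure (for $k$ large; for $k$ very negative the estimate is handled by absorbing into the integral on the right). Let $F_k:=\boz_k^\complement$ and, for a suitable fixed $\gz\in(0,1)$ coming from Lemma \ref{l3.1} (applied with, say, $\nu=\eta=1$), let $(F_k)^\ast_\gz$ be the set of points of global $\gz$-density with respect to $F_k$, and $\boz_k^\ast:=((F_k)^\ast_\gz)^\complement$; recall from the discussion preceding Lemma \ref{l3.1} that $\boz_k\subset\boz_k^\ast$ and $|\boz_k^\ast|\le C(\gz)|\boz_k|$.

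The core of the argument is a pointwise-in-$x$ inequality saying that $\ca(f)$ restricted to the cone truncated to the region lying over $\boz_k\setminus\boz_{k+1}$ is controlled. More precisely, I would introduce the truncated cones/tent pieces $\Delta_k:=\widehat{\boz_k^\ast}\setminus\widehat{\boz_{k+1}^\ast}$ (the "annular tent" at height $k$), so that $\rr^{n+1}_+=\bigcup_{k}\Delta_k$ up to a null set once one checks that $\cup_k\widehat{\boz_k^\ast}$ is all of $\rr^{n+1}_+$ modulo the set where $f=0$ a.e. For the key estimate, one shows that
\begin{equation*}
\int_{\boz_k}\lf[\ca\lf(f\chi_{\Delta_{k-1}}\r)(x)\r]^2\,dx
=\int_{\boz_k}\int_{\bgz(x)}|f(y,t)|^2\chi_{\Delta_{k-1}}(y,t)\,\frac{dy\,dt}{t^{n+1}}\,dx
\end{equation*}
is, by Fubini and Lemma \ref{l3.1} (with $H(y,t):=|f(y,t)|^2 t^{-(n+1)}$ and $F=F_{k}$, noting that the portion of $\Delta_{k-1}$ over $\boz_k^\ast$ is contained in $\ccr_1((F_{k})^\ast_\gz)$... ), comparable to $\int_{\boz_{k-1}\setminus\boz_{k}}[\ca(f)(x)]^2\,dx\le 2^{2k}|\boz_{k-1}|$, up to the constant $C(\gz)$. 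Summing a geometric-type bound, one arrives at the pointwise principle that for a.e.\ $x\in\boz_k\setminus\boz_{k+1}$ the "atom" $a$ supported in $\widehat{\boz_k^\ast}$ built from $f$ there has $\ca$-norm controlled by $2^k$, and that the number-valued coefficient attached to the ball-cover of $\boz_k^\ast$ is $\sim 2^k$.

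The final step converts the $L^2$-level estimates into the claimed $\fai$-estimate. Using that $\fai\in\aa_\fz(\rn)$, so by Lemma \ref{l2.4}(vi) $\fai(\boz_k^\ast,t)\le C(\gz)\,\fai(\boz_k,t)$ for all $t\in[0,\fz)$ (comparing $\boz_k\subset\boz_k^\ast$ as unions of comparable balls, with $|\boz_k^\ast|\le C(\gz)|\boz_k|$), and using the uniformly lower type $p$ and uniformly upper type $1$ of $\fai$ to pass between dilates of the argument $2^k/\lz$, I would estimate
\begin{equation*}
\sum_{k\in\zz}\fai\lf(\boz_k,\frac{2^k}{\lz}\r)
\ls\sum_{k\in\zz}\fai\lf(\boz_k\setminus\boz_{k+1},\frac{2^{k}}{\lz}\r)
\ls\sum_{k\in\zz}\int_{\boz_k\setminus\boz_{k+1}}\fai\lf(x,\frac{\ca(f)(x)}{\lz}\r)\,dx
=\int_{\rn}\fai\lf(x,\frac{\ca(f)(x)}{\lz}\r)\,dx,
\end{equation*}
where the first inequality uses that $\boz_k=\bigcup_{\ell\ge k}(\boz_\ell\setminus\boz_{\ell+1})$ together with the uniform quasi-subadditivity of $\fai$ from Lemma \ref{l2.1}(i) and a geometric series in the dilation parameter (controlled by the lower/upper type), and the middle inequality uses $\ca(f)(x)>2^k$ on $\boz_k\setminus\boz_{k+1}$ (wait, one needs $\ca(f)(x)\sim 2^k$ there, which is exactly $2^k<\ca(f)(x)\le 2^{k+1}$) with the monotonicity of $\fai(x,\cdot)$. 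The main obstacle I anticipate is the first displayed inequality: one must sum the tails $\fai(\boz_k,2^k/\lz)\ls\sum_{\ell\ge k}\fai(\boz_\ell\setminus\boz_{\ell+1},2^k/\lz)$ and then interchange the double sum, absorbing the factor $2^{k-\ell}$ that appears when rescaling the argument from $2^\ell/\lz$ down to $2^k/\lz$ — this is where uniformly lower type $p>0$ is essential to make $\sum_{k\le\ell}2^{(k-\ell)p}<\fz$. This is precisely the type of bookkeeping carried out in \cite[Lemma 5.4]{k}, so the details can be imported with only notational changes, and I would simply refer to that argument rather than reproduce it.
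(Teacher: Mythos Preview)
Your final paragraph contains the correct argument, and it is exactly the approach the paper intends (and the approach of \cite[Lemma~5.4]{k}): write $\boz_k=\bigcup_{\ell\ge k}(\boz_\ell\setminus\boz_{\ell+1})$ (using that $\ca(f)<\fz$ a.e.\ since $\ca(f)\in L^\fai(\rn)$), interchange the double sum, use the uniformly lower type $p$ to bound $\fai(x,2^k/\lz)\ls 2^{(k-\ell)p}\fai(x,2^\ell/\lz)$ for $k\le\ell$, sum the geometric series, and finish with the monotonicity of $\fai(x,\cdot)$ together with $2^\ell<\ca(f)(x)$ on $\boz_\ell\setminus\boz_{\ell+1}$. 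That is the whole proof.

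Everything in your first two paragraphs is irrelevant to Lemma~\ref{l3.2}. The density sets $(F_k)^\ast_\gz$, the tent pieces $\Delta_k$, Lemma~\ref{l3.1}, the $L^2$ estimates, and the comparison $\fai(\boz_k^\ast,t)\ls\fai(\boz_k,t)$ are all ingredients for the proof of \emph{Theorem}~\ref{t3.1} (the atomic decomposition of $T_\fai(\rr^{n+1}_+)$), not for this lemma. Lemma~\ref{l3.2} is a purely one-dimensional level-set computation for the scalar function $\ca(f)$; the geometry of $\rr^{n+1}_+$ plays no role. You should delete that material entirely.

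Two minor points on the final paragraph. First, your appeal to Lemma~\ref{l2.1}(i) is misplaced: that lemma concerns quasi-subadditivity of $\fai(x,\cdot)$ in the \emph{second} variable, whereas the step $\fai(\boz_k,t)=\sum_{\ell\ge k}\fai(\boz_\ell\setminus\boz_{\ell+1},t)$ is just additivity of the integral over disjoint sets. Second, your parenthetical ``wait, one needs $\ca(f)(x)\sim 2^k$'' is unnecessary: on $\boz_k\setminus\boz_{k+1}$ you only use $2^k<\ca(f)(x)$ and the fact that $\fai(x,\cdot)$ is nondecreasing, so the upper bound $2^{k+1}$ is not needed at that step.
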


Now we prove Theorem \ref{t3.1} by using Lemmas \ref{l3.1} and \ref{l3.2}.

\begin{proof}[Proof of Theorem \ref{t3.1}]
Assume first that there exist $\{\lz_j\}_j\subset\cc$ and a sequence $\{a_j\}$
of $(\fai,\fz)$-atoms such that \eqref{3.1} and \eqref{3.x1} hold true.
By Minkowski's inequality for integrals, the definition of $\ca(f)$, and
Lemmas \ref{l3.1a} and \ref{l2.1}(i), we conclude that, for all $\lz\in(0,\fz)$,
\begin{eqnarray*}
\int_{\rn}\fai\lf(x,\frac{\ca(f)(x)}{\lz}\r)\,dx
&&\ls\sum_j\int_{\rn}\fai\lf(x,\frac{|\lz_j|\ca(a_j)(x)}{\lz}\r)\,dx\\
&&\ls\sum_j\fai\lf(B_j,\frac{|\lz_j|}{\lz\|\chi_{B_j}\|_{L^\fai(\rn)}}\r),
\end{eqnarray*}
which, together with \eqref{3.x1} and the definitions of
$\blz(\{\lz_j a_j\}_j)$ and $\|f\|_{T_{\fai}(\rr^{n+1}_+)}$, implies that
$f\in T_{\fai}(\rr^{n+1}_+)$ and $\|f\|_{T_{\fai}(\rr^{n+1}_+)}\ls\blz(\{\lz_j a_j\}_j)$.

Conversely, let $f\in T_{\fai}(\rr^{n+1}_+)$. For any $k\in\zz$, let
$$O_k:=\lf\{x\in\rn:\ \ca(f)(x)>2^k\r\}$$
and $F_k:=O_k^\complement$. Since $f\in T_{\fai}(\rr^{n+1}_+)$, for each
$k$, $O_k$ is an open set of $\rn$ and $|O_k|<\fz$.

Let $\gz\in(0,1)$ be as in Lemma \ref{l3.1}. In what follows,
we simplify notations and write $(F_{k})_\gz^\ast$ and $(O_{k})_\gz^\ast$ as
$F_k^\ast$ and $O_k^\ast$, respectively. We claim that $\supp f\subset(\cup_{k\in\zz}
\widehat{O^{\ast}_k}\cup E)$, where $E\subset\rr^{n+1}_+$ satisfies that
$\int_E\frac{dy\,dt}{t}=0$. To see this, let $(x,t)\in\rr^{n+1}_+$ be a
Lebesgue point of $f$ and $(x,t)\not\in
\cup_{k\in\zz}\widehat{O^{\ast}_k}$. Then, by $(x,t)\not\in
\cup_{k\in\zz}\widehat{O^{\ast}_k}$, we know that there exists a sequence
$\{y_k\}_{k\in\zz}$ of points such that $\{y_k\}_{k\in\zz}\subset B(x,t)$
and, for each $k$, $y_k\not\in O^{\ast}_k$, which, together with \eqref{3.1x},
implies that, for each $k\in\zz$,
$\cm(\chi_{O_k})(y_k)\le 1-\gz$. From
this, we further deduce that $|B(x,t)\cap O_k|\le (1-\gz)|B(x,t)|$ and hence
$$|B(x,t)\cap\{z\in\rn:\ \ca(f)(z)\le2^k\}|\ge\gz|B(x,t)|.$$
Letting $k\to-\fz$, we then see that $|B(x,t)\cap\{z\in\rn:\
\ca(f)(z)=0\}|\ge\gz|B(x,t)|$.
Therefore, since $\gz\in(0,1)$, it follows that there exists
$y\in B(x,t)$ such that $\ca(f)(y)=0$. By this and the definition of $\ca(f)$, we see that
$f=0$ almost everywhere in $\bgz(y)$,
which, together with Lebesgue's differentiation theorem, implies that
$f(x,t)=0$. From this and the fact that almost every $(x,t)\in\rr^{n+1}_+$ is
a Lebesgue point of $f$, we infer that the claim holds true.

Recall that $O^\ast_k$, for each $k\in\zz$, is open. Moreover, for each
$k\in\zz$, considering a Whitney decomposition of the set $O^\ast_k$, we obtain
a set $I_k$ of indices and a family $\{Q_{k,\,j}\}_{j\in I_k}$ of closed cubes
with disjoint interiors such that

(i) $\cup_{j\in I_k}Q_{k,j}=O^\ast_k$ and, if $i\neq j$, then $\mathring Q_{k,j}
\cap\mathring Q_{k,i}=\emptyset$, where $\mathring E$ denotes the \emph{interior} of the set $E$;

(ii) $\sqrt{n}\ell(Q_{k,j})\le\dist(Q_{k,j}, (O^\ast_k)^\complement)\le4
\sqrt{n}\ell(Q_{k,j})$, where $\ell(Q_{k,j})$ denotes the
\emph{side-length} of $Q_{k,j}$ and $\dist(Q_{k,j}, (O^\ast_k)^\complement)
:=\inf\{d(u,w):\ u\in Q_{k,j},\ w\in (O^\ast_k)^\complement\}$.

Then for each $j\in I_k$, we let $B_{k,j}$ be the \emph{ball with the
center same as $Q_{k,j}$ and with the radius $\frac{11}{2}\sqrt{n}$-times
$\ell(Q_{k,j})$}. Let $A_{k,j}:=\widehat{B_{k,j}}\cap(Q_{k,j}
\times(0,\fz))\cap(\widehat{O^\ast_k}\setminus \widehat{O^\ast_{k+1}})$,
$$a_{k,j}:=2^{-k}\|\chi_{B_{k,j}}\|^{-1}_{L^\fai(\rn)}
f\chi_{A_{k,j}}$$
and $\lz_{k,j}:=2^{k}\|\chi_{B_{k,j}}\|_{L^\fai(\rn)}$. Notice that $\{(Q_{k,j}
\times(0,\fz))\cap(\widehat{O^\ast_k}\setminus \widehat{O^\ast_{k+1}})\}
\subset\widehat{B_{k,j}}$. From this, we deduce that
\begin{equation}\label{3.x2}
f=\sum_{k\in\zz}\sum_{j\in I_k}
\lz_{k,j}a_{k,j}
\end{equation}
almost everywhere on $\rr^{n+1}_+$.

We first show that, for each $k\in\zz$ and $j\in I_k$, $a_{k,j}$ is
a $(\fai,\,\fz)$-atom supported in $\widehat{B_{k,j}}$. Let $p\in(1,\fz)$, $p'$
be its \emph{conjugate index}, and
$h\in T^{p'}_2(\rr^{n+1}_+)$ with $\|h\|_{T^{p'}_2(\rr^{n+1}_+)}\le1$.
Since $A_{k,j}\subset(\widehat{O^\ast_{k+1}})^\complement=F^\ast_{k+1}$,
by Lemma \ref{l3.1} and H\"older's inequality, we see that
\begin{eqnarray*}
|\langle a_{k,j},h\rangle|&&:=\lf|\int_{\rr^{n+1}_+}a_{k,j}(y,t)
\chi_{A_{k,j}}(y,t)h(y,t)\frac{dy\,dt}{t}\r|\\ \nonumber
&&\ls\int_{F_{k+1}}\int_{\bgz(x)}|a_{k,j}(y,t)h(y,t)|
\frac{dy\,dt}{t^{n+1}}\,dx
\ls\int_{(O_{k+1})^\complement}\ca(a_{k,j})(x)\ca(h)(x)\,dx\\ \nonumber
&&\ls2^{-k}\|\chi_{B_{k,j}}\|^{-1}_{L^\fai(\rn)}
\lf\{\int_{B_{k,j}\cap(O_{k+1})^\complement}
[\ca(f)(x)]^p\,dx\r\}^{1/p}\|h\|_{T^{p'}_2(\rr^{n+1}_+)}\\ \nonumber
&&\ls|B_{k,j}|^{1/p}\|\chi_{B_{k,j}}\|^{-1}_{L^\fai(\rn)},
\end{eqnarray*}
which, together with $(T^p_2(\rr^{n+1}_+))^\ast=T^{p'}_2(\rr^{n+1}_+)$
(see \cite[Theorem 2]{cms85}), where $(T^p_2(\rr^{n+1}_+))^\ast$ denotes
the \emph{dual space} of $T^p_2(\rr^{n+1}_+)$ and $1/p+1/p'=1$, implies that
$\|a_{k,j}\|_{T_2^p(\rr^{n+1}_+)}\ls|B_{k,j}|^{1/p}
\|\chi_{B_{k,j}}\|^{-1}_{L^\fai(\rn)}$. Thus, $a_{k,j}$ is a
$(\fai,p)$-atom supported in $\widehat{B_{k,j}}$ up to a harmless constant
for all $p\in(1,\fz)$ and hence a $(\fai,\fz)$-atom up to a harmless constant.

Since $\fai\in\aa_\fz(\rn)$, by Lemma \ref{l2.4}(iv), we know that there
exists $p_0\in(q(\fai),\fz)$
such that $\fai\in\aa_{p_0}(\rn)$. From this and Lemma \ref{l2.4}(v), it
follows that, for any $k\in\zz$ and $t\in(0,\fz)$,
\begin{eqnarray*}
\fai\lf(O^\ast_k,t\r)&&\ls\frac{1}{(1-\gz)^{p_0}}\int_{O^\ast_k}
\lf[\cm(\chi_{O_k})(x)\r]^{p_0}\fai(x,t)\,dx\\
&&\ls
\frac{1}{(1-\gz)^{p_0}}\int_{\rn}
\lf[\chi_{O_k}(x)\r]^{p_0}\fai(x,t)\,dx\sim\fai\lf(O_k,t\r),
\end{eqnarray*}
which, together with Lemmas \ref{l2.4}(vi) and
\ref{l3.2}, and the property (i) of $\{Q_{k,j}\}_{k\in\zz,\,j\in I_k}$,
implies that, for all $\lz\in(0,\fz)$,
\begin{eqnarray}\label{3.x3}
&&\sum_{k\in\zz}\sum_{j\in I_k}\fai\lf(
B_{k,j},\frac{|\lz_{k,j}|}{\lz\|\chi_{
B_{k,j}}\|_{L^\fai(\rn)}}\r)\\ \nonumber
&&\hs\ls\sum_{k\in\zz}\sum_{j\in
I_k}\fai\lf(B_{k,j},\frac{2^k}{\lz}\r)\ls\sum_{k\in\zz}\sum_{j\in
I_k}\fai\lf(Q_{k,j},\frac{2^k}{\lz}\r)\ls\sum_{k\in\zz}
\fai\lf(O^\ast_k,\frac{2^k}{\lz}\r)\\ \nonumber
&&\hs\ls\sum_{k\in\zz}
\fai\lf(O_k,\frac{2^k}{\lz}\r)
\ls\int_{\rn}\fai\lf(x,\frac{\ca(f)(x)}{\lz}\r)\,dx.
\end{eqnarray}
By this, we conclude that
$\blz(\{\lz_{k,j}a_{k,j}\}_{k\in\zz,\,j})\ls\|f\|_{T_{\fai}(\rr^{n+1}_+)}$, which
completes the proof of Theorem \ref{t3.1}.
\end{proof}

\begin{remark}\label{r3.x1}
Let $\{a_{k,j}\}_{k\in\zz,\,j\in I_k}$ be as in \eqref{3.x2}.
Then $\{\supp(a_{k,j})\}_{k\in\zz,\,
j\in I_k}$ have pairwise disjoint interior and
$\int_{\supp f\setminus\cup_{k\in\zz,\,
j\in I_k}\supp(a_{k,j})}\frac {dy\,dt}t=0$. Indeed, let
$\{A_{k,j}\}_{k\in\zz,\,j\in I_k}$, $\{Q_{k,j}\}_{k\in\zz,\,j\in I_k}$
and $\{\widehat{O^\ast_{k}}\}_{k\in\zz}$ be as in the proof of Theorem \ref{t3.1}.
Then by the definition of the set $A_{k,j}$, the fact that $\mathring Q_{k,j}\cap
\mathring Q_{k,j_1}=\emptyset$ for any $k\in\zz$ and $j,\,j_1\in I_k$
with $j\neq j_1$, and the observation that $(\widehat{O^\ast_k}\setminus \widehat{O^\ast_{k+1}})\cap
(\widehat{O^\ast_{k_1}}\setminus \widehat{O^\ast_{k_1+1}})=\emptyset$ for any $k,\,k_1\in\zz$
and $k\neq k_1$, we conclude that the collection of sets, $\{A_{k,j}\}_{k\in\zz,\,j\in I_k}$, are pairwise
disjoint, up to sets of measure zero. From this and the definitions of
$\{a_{k,j}\}_{k\in\zz,\,j\in I_k}$, we infer that this claim holds true.
\end{remark}

\begin{corollary}\label{c3.1}
Let $p\in(0,\fz)$ and $\fai$ be as in Definition \ref{d2.2}. If $f\in
T_{\fai}(\rr^{n+1}_+)\cap T^p_2(\rr^{n+1}_+) $, then the
decomposition \eqref{3.1} also holds in both $T_{\fai}(\rr^{n+1}_+)$ and
$T^p_2(\rr^{n+1}_+)$.
\end{corollary}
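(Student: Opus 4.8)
The plan is to show that the decomposition $f=\sum_{k\in\zz}\sum_{j\in I_k}\lz_{k,j}a_{k,j}$ constructed in the proof of Theorem \ref{t3.1}, which already holds almost everywhere on $\rr^{n+1}_+$, in fact converges in the quasi-norms $\|\cdot\|_{T_{\fai}(\rr^{n+1}_+)}$ and $\|\cdot\|_{T^p_2(\rr^{n+1}_+)}$ when the additional assumption $f\in T^p_2(\rr^{n+1}_+)$ is imposed. Write $f_N:=\sum_{|k|\le N}\sum_{j\in I_k}\lz_{k,j}a_{k,j}$ for the partial sums (indexed over $k$; the inner sum over $j\in I_k$ is handled by the bounded-overlap property of the Whitney cubes, so it causes no convergence issue). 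First I would record that, since the sets $A_{k,j}$ for fixed $k$ have pairwise disjoint interiors (the $Q_{k,j}$ do) and the sets $O^\ast_k\setminus O^\ast_{k+1}$ are pairwise disjoint in $k$, the supports $A_{k,j}$ are pairwise disjoint up to sets of measure zero; hence $f-f_N=f\chi_{G_N}$ where $G_N:=\cup_{|k|>N}\cup_{j\in I_k}A_{k,j}$, and $\chi_{G_N}\to0$ almost everywhere on $\rr^{n+1}_+$ as $N\to\fz$ (this uses the claim, proved in Theorem \ref{t3.1}, that $\supp f\subset\cup_{k\in\zz}\widehat{O^\ast_k}$ modulo a null set for $dy\,dt/t$).

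For the $T^p_2(\rr^{n+1}_+)$ convergence, the key point is the pointwise estimate $\ca(f-f_N)(x)=\ca(f\chi_{G_N})(x)\le\ca(f)(x)$ for all $x\in\rn$, together with $\ca(f)\in L^p(\rn)$ (the hypothesis $f\in T^p_2$), so the dominated convergence theorem applies once I check that $\ca(f\chi_{G_N})(x)\to0$ for almost every $x$. The latter follows from the monotone/dominated convergence theorem applied to the inner integral $\int_{\bgz(x)}|f(y,t)|^2\chi_{G_N}(y,t)\,t^{-n-1}\,dy\,dt$, since $\chi_{G_N}\to0$ a.e.\ and the integrand is dominated by $|f(y,t)|^2 t^{-n-1}\chi_{\bgz(x)}$, which is integrable for a.e.\ $x$ because $\ca(f)(x)<\fz$ a.e. Thus $\|f-f_N\|_{T^p_2(\rr^{n+1}_+)}=\|\ca(f\chi_{G_N})\|_{L^p(\rn)}\to0$.

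For the $T_{\fai}(\rr^{n+1}_+)$ convergence, the same pointwise facts give $\ca(f-f_N)\le\ca(f)$ pointwise and $\ca(f-f_N)\to0$ a.e.; since $\ca(f)\in L^\fai(\rn)$ and $\fai$ has uniformly lower type $p$ for some $p\in(0,1]$, I would invoke the dominated convergence theorem for the Musielak-Orlicz norm, i.e.\ show directly that $\int_{\rn}\fai(x,\ca(f-f_N)(x)/\lz)\,dx\to0$ for each fixed $\lz\in(0,\fz)$ by dominated convergence with majorant $\fai(\cdot,\ca(f)(\cdot)/\lz)\in L^1(\rn)$, and then translate this into $\|f-f_N\|_{T_\fai}\to0$ via a standard argument (for $\lz>\|\ca(f)\|_{L^\fai}$ one has the majorant bound, and the integral tending to $0$ forces the Luxembourg norm of $\ca(f-f_N)$ to $0$; alternatively use the lower-type-$p$ inequality $\fai(x,\ca(f-f_N)(x))\le[\ca(f-f_N)(x)/\ca(f)(x)]^{p}\fai(x,\ca(f)(x))$ pointwise where $\ca(f)\ne0$).

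The main obstacle is purely bookkeeping: verifying carefully that $f-f_N$ equals $f$ times the characteristic function of the ``tail'' region and that this tail shrinks to a null set, so that both convergences reduce to a dominated-convergence argument with the single, manifestly integrable majorant $\ca(f)$; there is no hard analytic estimate beyond what is already in Theorem \ref{t3.1}. One minor subtlety to be careful about is the interchange of the sum over $j\in I_k$ with the limit, which is legitimate because for each $k$ the family $\{A_{k,j}\}_{j\in I_k}$ has the finite-overlap (indeed essentially disjoint) property inherited from the Whitney cubes, so $\sum_{j\in I_k}\lz_{k,j}a_{k,j}=f\chi_{O^\ast_k\setminus O^\ast_{k+1}}$ pointwise and the double sum is really a single sum over $k$ of these disjointly supported pieces.
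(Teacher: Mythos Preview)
Your argument is correct and takes a genuinely different, more elementary route than the paper's. The paper does not exploit the pointwise identity $f-f_N=f\chi_{G_N}$; instead, for the $T_\fai$ convergence it proves the term-by-term modular estimate
\[
\int_{\rn}\fai\lf(x,\ca(\lz_{k,j}a_{k,j})(x)\r)\,dx
\ls\fai\lf(B_{k,j},\frac{|\lz_{k,j}|}{\|\chi_{B_{k,j}}\|_{L^\fai(\rn)}}\r),
\]
using H\"older's inequality, the reverse H\"older condition $\fai\in\rh_{q_0}(\rn)$ (from Lemma~\ref{l2.4}(iv)), the uniformly upper type~1 property, and the $(\fai,\fz)$-atom size bound on $a_{k,j}$; convergence then follows from the summability $\sum_{k,j}\fai(B_{k,j},|\lz_{k,j}|/\|\chi_{B_{k,j}}\|_{L^\fai})<\fz$ established in Theorem~\ref{t3.1} and the $\sigma$-subadditivity of Lemma~\ref{l2.1}(i). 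For $T^p_2$ it simply cites \cite[Proposition~3.1]{jy10}. Your dominated-convergence approach with the single majorant $\ca(f)$ is shorter and avoids the reverse H\"older machinery; the paper's approach, on the other hand, yields the modular estimate above as a byproduct, which amounts to an unconditional (absolutely summable in modular) form of the decomposition and is convenient for later use. One small point worth making explicit in your write-up: the integrability of the majorant $\fai(\cdot,\ca(f)(\cdot)/\lz)$ for \emph{every} $\lz\in(0,\fz)$ (not just $\lz\ge\|f\|_{T_\fai}$) follows from the uniformly upper type~1 property of $\fai$, which you should invoke so that the final step ``modular $\to 0$ at every scale $\Rightarrow$ Luxembourg norm $\to 0$'' goes through cleanly.
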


\begin{proof}
Let $f\in T_{\fai}(\rr^{n+1}_+)\cap T^p_2(\rr^{n+1}_+)$. We first show that
\eqref{3.1} holds in $T_{\fai}(\rr^{n+1}_+)$.
Assume that, for each $k$ and $j$, $\lz_{k,j}$, $a_{k,j}$ and
$B_{k,j}$ are as in the proof of Theorem \ref{t3.1}. By Lemma \ref{l3.1a},
we see that
\begin{equation}\label{3.3}
\int_{\rn}\fai\lf(x,\ca(\lz_{k,j}a_{k,j})(x)\r)\,dx
\ls\fai\lf(B_{k,j},\frac{|\lz_{k,j}|}
{\|\chi_{B_{k,j}}\|_{L^\fai(\rn)}}\r),
\end{equation}
Moreover, it was proved in Theorem \ref{t3.1} (see \eqref{3.x3}) that
$$\sum_{k\in\zz}\sum_{j\in I_k}\fai\lf(B_{k,j},
\frac{|\lz_{k,j}|}{\|\chi_{B_{k,j}}
\|_{L^\fai(\rn)}}\r)\ls\int_{\rn}\fai(x,\ca(f)(x))\,dx<\fz.
$$
By this, \eqref{3.1}, Lemma \ref{l2.1}(i) and \eqref{3.3}, we conclude that
\begin{eqnarray*}
&&\int_{\rn}\fai\lf(x,
\ca\lf(f-\sum_{|k|+j<N}\lz_{k,j}a_{k,j}\r)(x)\r)\,dx\\
&&\hs\ls\sum_{|k|+j\ge N}\int_{\rn}\fai\lf(x,
\ca(\lz_{k,j}a_{k,j})(x)\r)\,dx\ls\sum_{|k|+j\ge
N}\fai\lf(B_{k,j}, \frac{|\lz_{k,j}|}{\|\chi_{B_{k,j}}
\|_{L^\fai(\rn)}}\r)\to0,
\end{eqnarray*}
as $N\to\fz$. Therefore, \eqref{3.1} holds true in $T_{\fai}(\rr^{n+1}_+)$.

Moreover, similar to the proof of
\cite[Proposition 3.1]{jy10}, we
know that \eqref{3.1} also holds true in $T^p_2(\rr^{n+1}_+)$, which
completes the proof of Corollary \ref{c3.1}.
\end{proof}

In what follows, let $T^c_{\fai}(\rr^{n+1}_+)$ and $T^{p,\,c}_2(\rr^{n+1}_+)$ with
$p\in(0,\fz)$ denote, respectively, the sets of all functions
in $T_{\fai}(\rr^{n+1}_+)$ and $T^p_2(\rr^{n+1}_+)$ with compact support.

\begin{proposition}\label{p3.1}
Let $\fai$ be as in Definition \ref{d2.2}. Then
$T^c_{\fai}(\rr^{n+1}_+)\subset T^{2,\,c}_2(\rr^{n+1}_+)$ as sets.
\end{proposition}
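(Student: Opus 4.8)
The plan is to take an arbitrary $f\in T^c_\fai(\rr^{n+1}_+)$ and show that $\ca(f)\in L^2(\rn)$ with, in fact, $\ca(f)$ supported in a fixed compact set. First I would use the compact support of $f$: there exists a ball $B\subset\rn$ and $0<b_1<b_2<\fz$ such that $\supp f\subset B\times[b_1,b_2]$. From the definition of $\ca$, if $\ca(f)(x)\neq0$ then the cone $\bgz(x)$ must intersect $\supp f$, which forces $x$ to lie in a bounded enlargement $\wz B$ of $B$ (specifically, $x$ within distance $b_2$ of $B$); hence $\supp(\ca(f))\subset\wz B$, a fixed ball of finite measure. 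So it suffices to show $\ca(f)\in L^\fz(\wz B)$ or at least $\ca(f)\in L^2(\wz B)$, i.e. that $f\in T^2_2(\rr^{n+1}_+)$.

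Next I would control $\|\ca(f)\|_{L^2}$ using Fubini's theorem together with the uniform lower bound $t\ge b_1$ on $\supp f$. Indeed,
\begin{align*}
\int_{\rn}[\ca(f)(x)]^2\,dx
&=\int_{\rn}\int_{\bgz(x)}|f(y,t)|^2\frac{dy\,dt}{t^{n+1}}\,dx\\
&=\int_{\rr^{n+1}_+}|f(y,t)|^2\frac{1}{t^{n+1}}\lf(\int_{|x-y|<t}dx\r)\frac{dy\,dt}{1}
\sim\int_{\rr^{n+1}_+}|f(y,t)|^2\frac{dy\,dt}{t}\\
&\le\frac{1}{b_1}\int_{B\times[b_1,b_2]}|f(y,t)|^2\,dy\,dt,
\end{align*}
so matters reduce to showing that $f\in L^2(B\times[b_1,b_2])$. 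To obtain this finiteness I would pass through the pointwise relation between $f$ and $\ca(f)$: since $t\sim1$ on $\supp f$ and $f$ is supported on a set of finite measure, $f\in T_\fai(\rr^{n+1}_+)$ gives $\ca(f)\in L^\fai(\rn)$, which combined with $\fai$ being a growth function (uniformly lower type $p$, $\fai(\cdot,t)\in\aa_\fz(\rn)$) and $\ca(f)$ being supported in the finite-measure ball $\wz B$ yields $\ca(f)\in L^{p}(\wz B)$, hence $\ca(f)$ is finite almost everywhere and actually in $L^{q}(\wz B)$ for small $q$. The point is then that $|f(y,t)|^2\ls t^{n+1}[\ca(\wz f)(y)]^2$ on $\supp f$ after a harmless averaging (using that a cone of aperture $1$ with vertex at $y$ contains a full neighborhood of $(y,t)$ when $t\sim1$), which localizes integrability of $f$ to that of $\ca(f)$.

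Concretely, the cleanest route is: for any Lebesgue point $(y,t)\in\supp f$ with $t\in[b_1,b_2]$, averaging $|f|^2\frac{dy\,dt}{t^{n+1}}$ over a small ball around $(y,t)$ contained in $\bgz(z)$ for all $z$ near $y$ shows $|f(y,t)|^2$ is dominated by a local average of $[\ca(f)(z)]^2$ over such $z$; integrating in $(y,t)$ over $B\times[b_1,b_2]$ and using Fubini bounds $\|f\|_{L^2(B\times[b_1,b_2])}^2$ by $C(b_1,b_2,n)\int_{\wz B}[\ca(f)(z)]^2\,dz$ — but this is circular unless we already know $\ca(f)\in L^2$. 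The non-circular argument instead only needs $\ca(f)<\fz$ a.e. and $f$ measurable on a bounded box: one shows directly that $\ca(f)(x)<\fz$ for a.e.\ $x$ forces, via the support constraints, $\int_{B\times[b_1,b_2]}|f(y,t)|^2\,dy\,dt<\fz$. I expect the main obstacle to be making this last implication rigorous: one must argue that finiteness of the cone integral $\ca(f)(x)$ at enough points $x$ (a positive-measure set of $x$ whose cones cover $\supp f$), together with $t$ bounded away from $0$ and $\fz$ on $\supp f$, is genuinely equivalent to the local $L^2$ integrability of $f$ over the box — essentially a localized converse to the Fubini computation above, proved by choosing, for each $(y,t)\in\supp f$, a fixed-size set of vertices $x$ with $(y,t)\in\bgz(x)$ and noting $\frac{dy\,dt}{t^{n+1}}\sim dy\,dt$ there. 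Once that is in hand, the displayed Fubini inequality finishes the proof, giving $\|\ca(f)\|_{L^2(\rn)}=\|\ca(f)\|_{L^2(\wz B)}<\fz$ with $\ca(f)$ compactly supported, i.e.\ $f\in T^{2,\,c}_2(\rr^{n+1}_+)$.
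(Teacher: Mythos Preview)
You actually brush past the paper's argument and then take a harder road. Midway through you assert that $\ca(f)\in L^\fai(\rn)$ together with the lower-type and $\aa_\fz$ hypotheses and $\supp\ca(f)\subset\wz B$ gives $\ca(f)\in L^q(\wz B)$ for some small $q>0$ --- and then use this only to conclude $\ca(f)<\fz$ a.e. But $\ca(f)\in L^q(\rn)$ is exactly the statement $f\in T^{q,\,c}_2(\rr^{n+1}_+)$, and the elementary inclusion $T^{q,\,c}_2(\rr^{n+1}_+)\subset T^{2,\,c}_2(\rr^{n+1}_+)$ (valid for every $q>0$; compact support keeps $t$ bounded away from $0$ and $\fz$) finishes immediately. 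This is precisely the paper's route: with $p_0\in(0,i(\fai))$ and $q_0\in(q(\fai),\fz)$, H\"older together with the $\aa_{q_0}(\rn)$ condition bounds $\int_B[\ca(f)]^{p_0/q_0}\,dx$ by a constant times $\{\int_B[\ca(f)]^{p_0}\fai(x,1)\,dx\}^{1/q_0}$, and the uniform lower type $p_0$ of $\fai$ bounds the latter by $\fai(B,1)+\int_B\fai(x,\ca(f)(x))\,dx<\fz$. Your one-line assertion needs exactly this computation to become a proof.

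As for the direct $L^2$ route you then pursue: the obstacle you flag is real, and your sketch does not close it. The ``localized converse to Fubini'' --- for each $(y,t)$ a fixed-size set of vertices, then integrate --- is the same Fubini identity read the other way; it only shows $\int_K|f|^2\sim\int_{\wz B}[\ca(f)]^2$ without giving finiteness of either side, since pointwise (or positive-measure) finiteness of $\ca(f)$ does not control $\int[\ca(f)]^2$. The non-circular fix is compactness: from $\ca(f)\in L^\fai(\rn)$ and $\fai(x,t)\to\fz$ as $t\to\fz$ one gets $\ca(f)<\fz$ a.e., so the open cones $\{\bgz(x):\ca(f)(x)<\fz\}$ cover the compact set $K:=\supp f$; a finite subcover $\bgz(x_1),\ldots,\bgz(x_N)$ gives
\[
\int_K|f(y,t)|^2\,dy\,dt\le b_2^{n+1}\sum_{i=1}^N\int_{\bgz(x_i)}|f(y,t)|^2\,\frac{dy\,dt}{t^{n+1}}=b_2^{n+1}\sum_{i=1}^N[\ca(f)(x_i)]^2<\fz,
\]
after which your displayed Fubini computation yields $\ca(f)\in L^2(\rn)$. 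This repaired argument is more elementary than the paper's (it uses nothing about $\fai$ beyond $\ca(f)<\fz$ a.e.), while the paper's exercises the growth-function hypotheses directly.
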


\begin{proof}
It is well known that, for all $p\in(0,\fz)$, $T^{p,\,c}_2(\rr^{n+1}_+)\subset
T^{2,\,c}_2(\rr^{n+1}_+)$ as sets (see, for example, \cite[p.\,306, (1.3)]{cms85}).
Thus, to prove $T^c_{\fai}(\rr^{n+1}_+)\subset T^{2,\,c}_2(\rr^{n+1}_+)$,
it suffices to show that $T^c_{\fai}(\rr^{n+1}_+)\subset T^{p,\,c}_2(\rr^{n+1}_+)$
for some $p\in(0,\fz)$. Suppose that $f\in T^c_{\fai}(\rr^{n+1}_+)$ and $\supp (f)\subset K$,
where $K$ is a compact set in $\rr^{n+1}_+$. Let
$B$ be a ball in $\rn$ such that $K\subset\widehat{B}$. Then
$\supp(\ca(f))\subset\widehat{B}$. Let $p_0\in(0,i(\fai))$ and $q_0\in(q(\fai),\fz)$.
Then $\fai$ is of uniformly lower type $p_0$ and $\fai\in\aa_{q_0}(\rn)$. From this,
H\"older's inequality, \eqref{2.2} and the uniformly
lower type $p_0$ property of $\fai$, we deduce that
\begin{eqnarray*}
&&\int_{\rn}[\ca(f)(x)]^{p_0/q_0}\,dx\\
&&\hs\le\lf\{\int_B[\ca(f)(x)]^{p_0}\fai(x,1)\,dx\r\}^{1/q_0}
\lf\{\int_B[\fai(x,1)]^{-q'_0/q_0}\,dx\r\}^{1/q'_0}\\
&&\hs\ls\frac{|B|}{[\fai(B,1)]^{1/q_0}}\lf\{\int_{\{x\in B:\
\ca(f)(x)\le1\}}[\ca(f)(x)]^{p_0}\fai(x,1)\,dx
+\int_{\{x\in B:\ \ca(f)(x)>1\}}\cdots\r\}^{1/q_0}\\
&&\hs\ls\frac{|B|}{[\fai(B,1)]^{1/q_0}}\lf\{\fai(B,1)
+\int_B\fai(x,\ca(f)(x))\,dx\r\}^{1/q_0}<\fz,
\end{eqnarray*}
where $1/q_0+1/q_0'=1$, which implies that $f\in
T^{p_0/q_0,\,c}_2(\rr^{n+1}_+)\subset T^{2,\,c}_2(\rr^{n+1}_+)$. This finishes
the proof of Proposition \ref{p3.1}.
\end{proof}

\section{Lusin area function and molecular characterizations \\of $H_\fai(\rn)$\label{s4}}

\hskip\parindent In this section, we first recall the Musielak-Orlicz Hardy
space $H_\fai(\rn)$ introduced by Ky \cite{k}. Then we establish two
equivalent characterizations of $H_\fai(\rn)$ in terms of the molecule
and the Lusin area function. We begin with some notion and notations.

In what follows, we denote by $\cs(\rn)$ the space of all Schwartz functions and
by $\cs'(\rn)$ its \emph{dual space} (namely, the space of all \emph{tempered distributions}).
For $m\in\nn$, define
$$\cs_m(\rn):=\lf\{\phi\in\cs(\rn):\ \sup_{x\in\rn}\sup_{
\bz\in\zz^n_+,\,|\bz|\le m+1}(1+|x|)^{(m+2)(n+1)}|\partial^\bz_x\phi(x)|\le1\r\}.
$$
Then for all $f\in\cs'(\rn)$, the \emph{non-tangential grand maximal function} $f^\ast_m$
of $f$ is defined by setting, for all $x\in\rn$,
$$f^\ast_m(x):=\sup_{\phi\in\cs_m(\rn)}\sup_{|y-x|<t,\,t\in(0,\fz)}|f\ast\phi_t(y)|,
$$
where, for all $t\in(0,\fz)$, $\phi_t(\cdot):=t^{-n}\phi(\frac{\cdot}{t})$.
When $m(\fai):=\lfz n[q(\fai)/i(\fai)-1]\rfz$, where $q(\fai)$ and $i(\fai)$ are,
respectively, as in \eqref{2.3} and \eqref{2.1}, we denote $f^\ast_{m(\fai)}$ simply
by $f^\ast$.

Now we recall the definition of the Musielak-Orlicz Hardy $H_\fai(\rn)$ introduced
by Ky \cite{k} as follows.

\begin{definition}\label{d4.1}
Let $\fai$ be as in Definition \ref{d2.2}. The \emph{Musielak-Orlicz Hardy space
$H_\fai(\rn)$} is defined as the space of all $f\in\cs'(\rn)$ such that
$f^\ast\in L^\fai(\rn)$
with the \emph{quasi-norm}
$\|f\|_{H_\fai(\rn)}:=\|f^\ast\|_{L^\fai(\rn)}$.
\end{definition}

\begin{definition}\label{d4.2}
Let $\fai$ be as in Definition \ref{d2.2}. Assume that
$\phi\in\cs(\rn)$ is a radial real-valued function satisfying that,
for all $\gz\in\zz_+^n$ and $|\gz|\le s$,
where $s\in\zz_+$ and $s\ge\lfz n[q(\fai)/i(\fai)-1]\rfz$,
\begin{equation}\label{4.1}
\int_{\rn}\phi(x)x^{\gz}\,dx=0
\end{equation}
and, for all $\xi\in\rn\setminus\{0\}$,
\begin{equation}\label{4.x1}
\int_0^\fz|\widehat{\phi}(t\xi)|^2\frac{dt}{t}=1,
\end{equation}
 where $\widehat{\phi}$ denotes the \emph{Fourier transform} of
$\phi$.

Then for all $f\in\cs'(\rn)$ and $x\in\rn$, define
$$S(f)(x):=\lf\{\int_{\bgz(x)}|\phi_t\ast f(y)|^2\frac{dy\,dt}{t^{n+1}}\r\}^{1/2}.$$
\end{definition}

It is known that the Lusin area function $S$ is bounded on $L^p(\rn)$ for all
$p\in(1,\fz)$ (see, for example, \cite[Theorem 7.8]{fs}).

Now we introduce the Musielak-Orlicz Hardy space $H_{\fai,S}(\rn)$ via the
Lusin area function as follows.

\begin{definition}\label{d4.3}
Let $\fai$ be as in Definition \ref{d2.2}. The \emph{Musielak-Orlicz Hardy space
$H_{\fai,S}(\rn)$} is defined as the space of all $f\in\cs'(\rn)$ such that
$S(f)\in L^\fai(\rn)$
with the \emph{quasi-norm}
$$\|f\|_{H_{\fai,S}(\rn)}:=\|S(f)\|_{L^\fai(\rn)}:=
\inf\lf\{\lz\in(0,\fz):\
\int_{\rn}\fai\lf(x,\frac{S(f)(x)}{\lz}\r)\,dx\le1\r\}.
$$
\end{definition}

To introduce the molecular Musielak-Orlicz Hardy space, we first
introduce the notion of the molecule associated with the growth function $\fai$.

\begin{definition}\label{d4.4}
Let $\fai$ be as in Definition \ref{d2.2}, $q\in(1,\fz)$, $s\in\zz_+$ and
$\varepsilon\in(0,\fz)$. We say that a function $\az\in L^q(\rn)$ is a
\emph{$(\fai,q,s,\uc)$-molecule} associated with the ball $B$ if

(i) for each $j\in\zz_+$, $\|\az\|_{L^q(U_j(B))}\le
2^{-j\uc}|2^j B|^{1/q}\|\chi_{B}\|_{L^\fai(\rn)}^{-1}$,
where $U_0(B):=B$ and $U_j(B):=2^j B\setminus2^{j-1}B$ for $j\in\nn$;

(ii) for all $\bz\in\zz_+^n$ with $|\bz|\le s$,
$\int_{\rn}\az(x)x^{\bz}\,dx=0$.\\
Moreover, if $\az$ is a $(\fai,q,s,\uc)$-molecule for all $q\in(1,\fz)$,
we then say that $\az$ is a \emph{$(\fai,\fz,s,\uc)$-molecule}.
\end{definition}

\begin{definition}\label{d4.5}
Let $\fai$ be as in Definition \ref{d2.2}, $p,\,q\in(1,\fz)$, $s\in\zz_+$
and $\uc\in(0,\fz)$. The \emph{molecular Musielak-Orlicz Hardy space}
$H^{q,s,\uc}_{\fai,\mathrm{mol}}(\rn)$ is defined as the space of all
$f\in\cs'(\rn)$ satisfying that $f=\sum_j\lz_j\az_j$ in $\cs'(\rn)$,
where $\{\lz_j\}\subset\cc$, $\{\az_j\}_j$ is a sequence of
$(\fai,\,q,\,s,\,\uc)$-molecules and
$\sum_j\fai(B_j,\|\chi_{B_j}\|^{-1}_{L^\fai(\rn)})<\fz$,
where, for each $j$, the molecule $\az_j$ is associated with the ball $B_j$.
Moreover, define
$\|f\|_{H^{q,s,\uc}_{\fai,\mathrm{mol}}(\rn)}:
=\inf\{\Lambda(\{\lz_j\az_j\}_{j\in{\nn}})\}$,
where the infimum is taken over all decompositions of $f$ as above and
$$\Lambda
\lf(\lf\{\lz_j\az_j\r\}_{j\in{\nn}}\r):=
\inf\lf\{\lz\in(0,\fz):\ \sum_{j\in\nn}\fai\lf(B_j,\frac{|\lz_j|}{
\lz\|\chi_{B_j}\|_{L^\fai(\rn)}}\r)\le1\r\}.$$
\end{definition}

\begin{definition}\label{d4.6}
Let $\phi$ be as in Definition \ref{d4.2}. For all $f\in T^{p,\,c}_2(\rr^{n+1}_+)$
with $p\in(1,\fz)$ and $x\in\rn$, define
\begin{equation}\label{4.2}
\pi_{\phi}(f)(x):=\int_0^{\fz}\lf(f(\cdot,t)\ast\phi_t\r)(x)
\,\frac{dt}{t}.
\end{equation}
\end{definition}

It was proved in \cite{cms85} that
$\pi_{\phi}(f)\in L^2(\rn)$ for such an $f$. Moreover, we have the following
properties for the operator $\pi_{\phi}$.

\begin{proposition}\label{p4.1}
Let  $\pi_{\phi}$ be as in \eqref{4.2} and $\fai$
as in Definition \ref{d2.2}. Then

{\rm(i)} the operator $\pi_{\phi}$, initially defined on the
space $T^{p,\,c}_2(\rr^{n+1}_+)$ with $p\in(1,\fz)$, extends to a bounded linear
operator from $T^p_2(\rr^{n+1}_+)$ to $L^p(\rn)$;

{\rm(ii)} the operator $\pi_{\phi}$, initially defined on the space
$T^c_{\fai}(\rr^{n+1}_+)$, extends to a bounded linear operator
from $T_{\fai}(\rr^{n+1}_+)$ to $H_{\fai,S}(\rn)$.
\end{proposition}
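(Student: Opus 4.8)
The plan is to prove part (i) first and then bootstrap to part (ii). For (i), I would start with a $(\fai,p)$-atom $a$ supported in $\widehat{B}$ for a ball $B$, and show $\|\pi_\phi(a)\|_{L^p(\rn)}\ls |B|^{1/p}\|\chi_B\|_{L^\fai(\rn)}^{-1}$. The key device is duality: for $g\in L^{p'}(\rn)$ with $\|g\|_{L^{p'}(\rn)}\le 1$, write
\begin{equation*}
\lf\langle\pi_\phi(a),g\r\rangle=\int_{\rr^{n+1}_+}a(y,t)\,(\phi_t\ast g)(y)\,\frac{dy\,dt}{t},
\end{equation*}
recognize $(y,t)\mapsto\phi_t\ast g(y)$ as (up to the vanishing-moment/reproducing normalization of $\phi$) an element of $T^{p'}_2(\rr^{n+1}_+)$ with $\|\phi_t\ast g\|_{T^{p'}_2(\rr^{n+1}_+)}\ls\|g\|_{L^{p'}(\rn)}$ by the $L^{p'}$-boundedness of the Lusin area function $S$, and then apply the $T^p_2$--$T^{p'}_2$ duality together with the support condition $\supp a\subset\widehat{B}$ and the atom size estimate (ii) in the definition of $(\fai,p)$-atom. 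Since a general $f\in T^{p,c}_2(\rr^{n+1}_+)$ decomposes into atoms with $\ell^p$-summable coefficients (Corollary \ref{c3.1} applied to $T^p_2$, or the classical Coifman--Meyer--Stein decomposition), summing gives boundedness $T^p_2(\rr^{n+1}_+)\to L^p(\rn)$ on the dense subspace $T^{p,c}_2$, hence on all of $T^p_2(\rr^{n+1}_+)$.

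For part (ii), I would take $f\in T^c_\fai(\rr^{n+1}_+)$. By Proposition \ref{p3.1}, $f\in T^{2,c}_2(\rr^{n+1}_+)$, so $\pi_\phi(f)$ is already well-defined in $L^2(\rn)$ by the Coifman--Meyer--Stein result, and the Calder\'on reproducing formula \eqref{4.22} identifies $S(\pi_\phi(f))$ appropriately. Invoke Theorem \ref{t3.1} and Corollary \ref{c3.1} to write $f=\sum_j\lz_j a_j$ with $(\fai,\fz)$-atoms $a_j$ (each supported in $\widehat{B_j}$), the sum converging in both $T_\fai(\rr^{n+1}_+)$ and $T^2_2(\rr^{n+1}_+)$, and with $\blz(\{\lz_j a_j\}_j)\ls\|f\|_{T_\fai(\rr^{n+1}_+)}$. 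Convergence in $T^2_2$ plus part (i) with $p=2$ shows $\pi_\phi(f)=\sum_j\lz_j\pi_\phi(a_j)$ in $L^2(\rn)$ and hence in $\cs'(\rn)$, and each $b_j:=\pi_\phi(a_j)$ is (up to a harmless constant) a $(\fai,2,s,\uc)$-molecule associated with $B_j$: the moment condition comes from \eqref{4.1}, and the off-diagonal decay $\|b_j\|_{L^2(U_i(B_j))}\ls 2^{-i\uc}|2^iB_j|^{1/2}\|\chi_{B_j}\|_{L^\fai(\rn)}^{-1}$ follows from the kernel estimates of $\pi_\phi$ exactly as in \cite{cms85,jy10}. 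Then I estimate $\int_\rn\fai(x,S(\pi_\phi(f))(x)/\lz)\,dx$ for $\lz=\blz(\{\lz_j a_j\}_j)$: using sub-additivity of $\fai$ (Lemma \ref{l2.1}(i)), the pointwise bound $S(\pi_\phi(f))\le\sum_j|\lz_j|S(b_j)$, and a standard molecule-to-$H_{\fai,S}$ estimate
\begin{equation*}
\int_\rn\fai\lf(x,|\lz_j|S(b_j)(x)\,\blz^{-1}\r)\,dx\ls\fai\lf(B_j,\frac{|\lz_j|}{\blz\,\|\chi_{B_j}\|_{L^\fai(\rn)}}\r),
\end{equation*}
(proved by splitting $\rn$ into the annuli $U_i(B_j)$, using H\"older with a reverse-H\"older exponent $q_0\in(1,r(\fai))$ as in the proof of Corollary \ref{c3.1}, the decay in $i$, and Lemma \ref{l2.4}(vi)/(vii) to control $\fai(2^iB_j,\cdot)$ against $\fai(B_j,\cdot)$), summing over $j$ gives $\le 1$ by the definition of $\blz$, hence $\|\pi_\phi(f)\|_{H_{\fai,S}(\rn)}\ls\|f\|_{T_\fai(\rr^{n+1}_+)}$. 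Finally, density of $T^c_\fai(\rr^{n+1}_+)$ in $T_\fai(\rr^{n+1}_+)$ (together with the quasi-norm completeness of $H_{\fai,S}(\rn)$) extends $\pi_\phi$ to the claimed bounded operator.

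The main obstacle I anticipate is the molecule-to-$H_{\fai,S}$ estimate, i.e. controlling $\int_\rn\fai(x,|\lz_j|S(b_j)(x)/\lz)\,dx$ by $\fai(B_j,|\lz_j|/(\lz\|\chi_{B_j}\|_{L^\fai}))$ uniformly. The subtlety is twofold: first, the Musielak--Orlicz structure forces one to handle the $L^q$-annular norms of $S(b_j)$ against the $x$-dependent weight $\fai(\cdot,t)$, which is where the reverse H\"older condition $\fai\in\rh_{q_0}(\rn)$ (available by Lemma \ref{l2.4}(iv)) and the doubling-type estimates of Lemma \ref{l2.4}(vi),(vii) enter; second, the series in $i$ (the annuli) must converge, which requires the molecular decay exponent $\uc$ to beat the growth of both $|2^iB_j|$ and the $\fai$-doubling factor $[2^{in}]^{q(\fai)}$ — this is exactly why one needs $\uc$ large relative to $n$, $q(\fai)$, $i(\fai)$, and it is the place where the constraint on parameters must be tracked carefully. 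Everything else (the duality argument in (i), the kernel/moment verification that $\pi_\phi(a_j)$ is a molecule, the passage from compact-support to general $f$) is routine and parallels \cite{cms85,jy10,jy11,jyz09}.
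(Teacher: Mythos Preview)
Your approach is essentially the paper's: part (i) is simply cited there as \cite[Theorem~6(1)]{cms85} (the duality argument you describe \emph{is} the CMS proof, and it works directly on any $f\in T^{p,c}_2$ --- the $(\fai,p)$-atomic detour is unnecessary), and for (ii) the paper proceeds exactly as you outline --- decompose $f$ via Theorem~\ref{t3.1} and Corollary~\ref{c3.1}, verify that each $\pi_\phi(a_j)$ is a $(\fai,q,s,n+\epz)$-molecule, and then prove the key estimate \eqref{4.7}. The one point your sketch underweights is that the molecule-to-$H_{\fai,S}$ estimate requires not only the spatial annular split over $U_i(B_j)$ but also a split of the Lusin integral at $t=r_B$: for $t\ge r_B$ the paper uses the molecule's vanishing moments via a Taylor expansion of $\phi$ (see \eqref{4.8} and the estimates for $\mathrm{F}_{i,j}$), which is what produces the extra factor $(2^jr_B/t)^{s+1}$ needed to make the double sum over annuli and scales converge under the constraint $s+1>n(q_0/p_0-1)$.
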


To prove Proposition \ref{p4.1}(ii), we need the following lemma.

\begin{lemma}\label{l4.1a}
Let $\pi_\phi$ and $s$ be respectively as in \eqref{4.2} and Definition \ref{4.2}.
Then for any $\epz\in(0,\fz)$ and any $(\fai,\fz)$-atom $a$ with $\supp (a)
\subset\widehat{B}$ and $B$ being a ball,
$\pi_\phi(a)$ is a harmless constant multiple of a $(\fai,\fz,s,n+\epz)$-molecule
associated with the ball $B$.
\end{lemma}

\begin{proof}
Let $a$ be a $(\fai,\fz)$-atom supported in
the ball $B:=B(x_B, r_B)$ and $q\in(1,\fz)$. Since,  for any $q\in(1,2)$ and
$\epz\in(0,\fz)$,
each $(\fai,2,s,n+\epz)$-molecule is also a $(\fai,q,s,n+\epz)$-molecule,
to prove this lemma, it suffices to show that, for any $\epz\in(0,\fz)$,
$\az:=\pi_{\phi}(a)$ is a harmless constant multiple of a $(\fai,q,s,
n+\epz)$-molecule associated with $B$ for $q\in[2,\fz)$.

Let $q\in[2,\fz)$. When $j\in\{0,\,\ldots,\,4\}$,
by the fact that $\pi_{\phi}$ is bounded from $T^q_2(\rr^{n+1}_+)$ to $L^q(\rn)$
(see Proposition \ref{p4.1}(i)), we know that
\begin{eqnarray}\label{4.4}
\|\az\|_{L^q(U_j(B))}=\|\pi_{\phi}(a)\|_{L^q(U_j(B))}
\ls\|a\|_{T^q_2(\rr^{n+1}_+)}\ls|B|^{1/q}
\|\chi_B\|^{-1}_{L^\fai(\rn)}.
\end{eqnarray}

When $j\in\nn$ and $j\ge4$, take $h\in L^{q'}(\rn)$ satisfying
$\|h\|_{L^{q'}(\rn)}\le1$
and $\supp(h)\subset U_j(B)$. Then from H\"older's inequality and
$q'\in(1,2]$, we infer that
\begin{eqnarray}\label{4.5}
\qquad|\langle \pi_{\phi}(a),h\rangle|
&&\le\int_B\int_0^{r_B}|a(x,t)||\phi_t\ast h(x)|\,dx\,\frac{dt}{t}\\ \nonumber
&&\ls\|\ca(a)\|_{L^q(\rn)}\lf\|\ca\lf(\chi_{\widehat{B}}\phi_t\ast
h\r)\r\|_{L^{q'}(\rn)}\\ \nonumber
&&\ls\|a\|_{T^q_2(\rr^{n+1}_+)}
|B|^{1/q'-1/2}\lf\{\int_{\widehat{B}}|\phi_t\ast h(x)|^2\,\frac{dx\,dt}{t}\r\}^{1/2}.
\end{eqnarray}

Let $\epz\in(0,\fz)$. Then by this, $\phi\in\cs(\rn)$,
H\"older's inequality and the fact that, for any $x\in B$ and
$y\in U_j(B)$, $|x-y|\gs2^{j-1}r_B$, we conclude that, for all $x\in B$,
\begin{eqnarray*}
|\phi_t\ast h(x)|
&&\ls\int_{\rn}\frac{t^{\epz}}{(t+|x-y|)^{n+\epz}}
|h(y)|\,dy\\
&&\ls\frac{t^{\epz}}{(2^{j}r_B)^{n+\epz}}
\|h\|_{L^{q'}(\rn)}|2^jB|^{1/q}\ls
\frac{t^{\epz}}{(2^{j}r_B)^{n/q'+\epz}},
\end{eqnarray*}
which, together with \eqref{4.5}, implies that
$$|\langle \pi_{\phi}(a),h\rangle|\ls2^{-j(n+\epz)}|2^jB|^{1/q}
\|\chi_B\|^{-1}_{L^\fai(\rn)}.$$
From this and the choice of $h$, we deduce that, for each
$j\in\nn$ and $j\ge4$,
\begin{eqnarray}\label{4.6}
\|\az_j\|_{L^{q}(U_j(B))}=
 \|\pi_\phi(a)\|_{L^{q}(U_j(B))}
 \ls2^{-j(n+\epz)}|2^jB|^{1/q}
\|\chi_B\|^{-1}_{L^\fai(\rn)}.
\end{eqnarray}
 Moreover, by \eqref{4.1}, we know that,
 for all $\gz\in\zz_+^n$ with $|\gz|\le s$,
$$\int_{\rn}\pi_\phi(a)(x)x^\gz\,dx=\int_0^\fz\lf\{\int_\rn
\int_\rn\phi_t(x-y)x^\gz\,dx\r\}a(y,t)\,\frac{dy\,dt}{t}=0,$$
which, together with \eqref{4.4} and \eqref{4.6}, implies that $\az$
is a  harmless constant multiple of a $(\fai,q,s,n+\epz)$-molecule
associated with $B$. This finishes the proof of Lemma \ref{l4.1a}.
\end{proof}

Now we prove Proposition \ref{p4.1} by using Proposition \ref{p3.1}, Corollary
\ref{c3.1} and Lemma \ref{l4.1a}.

\begin{proof}[Proof of Proposition \ref{p4.1}]
The conclusion (i) is just \cite[Theorem 6(1)]{cms85}.

Now we prove (ii). Let
$f\in T^c_{\fai}(\rr^{n+1}_+)$. Then by Proposition \ref{p3.1},
Corollary \ref{c3.1} and (i), we know that
$$\pi_{\phi}(f)=\sum_j\lz_j\pi_{\phi}(a_j)
=:\sum_j\lz_j\az_j\ \text{in}\  L^2(\rn),$$
where the sequences $\{\lz_j\}_j$ and $\{a_j\}_j$ satisfy
\eqref{3.1} and \eqref{3.2}. Recall that, for each $j$,
$\supp (a_j)\subset\widehat{B_j}$ and $B_j$ is a ball of $\rn$. Moreover,
from Minkowski's inequality for integrals,
we deduce that, for all $x\in\rn$,
$S(\pi_{\phi}(f))(x)\le\sum_j|\lz_j|S(\az_j)(x)$. This,
combined with Lemma \ref{l2.1}(i), yields that
\begin{equation}\label{4.3}
\int_{\rn}\fai(x,S(\pi_{\phi}(f))(x))\,dx\ls
\sum_j\int_{\rn}\fai(x,|\lz_j|S(\az_j)(x))\,dx.
\end{equation}

By Lemma \ref{l4.1a} with $\epz\in(n[q(\fai)/i(\fai)-1],\fz)$, we see that,
for each $j$, $\az_j:=\pi_{\phi}(a_j)$ is a harmless constant multiple of a
$(\fai,\fz,s,n+\epz)$-molecule associated with the ball $B_j$ for each $j$,
where $s$ is as in Definition \ref{d4.2}.

By $\epz>n[q(\fai)/i(\fai)-1]$ and $s\ge\lfz n[q(\fai)/i(\fai)-1]\rfz$,
we know that there exist $p_0\in(0,i(\fai))$ and $q_0\in(q(\fai),\fz)$
such that $\epz>n(q_0/p_0-1)$ and $s+1>n(q_0/p_0-1)$. Then $\fai\in\aa_{q_0}(\rn)$
and $\fai$ is of uniformly lower type $p_0$. Let $\wz\epz:=n+\epz$ and
$q\in[2,\fz)$ satisfying $q'<r(\fai)$. Then $\fai\in\rh_{q'}(\rn)$.
We now claim that, for any $\lz\in\cc$
and $(\fai,q,s,\wz\epz)$-molecule $\az$ associated with the ball $B\subset\rn$,
it holds that
\begin{equation}\label{4.7}
\int_{\rn}\fai(x,S(\lz
\az)(x))\,dx\ls\fai\lf(B,\frac{|\lz|}{\|\chi_B\|_{L^\fai(\rn)}}\r).
\end{equation}

Assuming that \eqref{4.7} holds for a moment, then
from \eqref{4.7}, the facts that, for all $\lz\in(0,\fz)$,
$S(\pi_{\phi}(f/\lz))=S(\pi_{\phi}(f))/\lz$, $\pi_{\phi}(f/\lz)=\sum_j\lz_j\az_j/\lz$
and $S(\pi_{\phi}(f))\le\sum_j|\lz_j|S(\az_j)$,
it follows that, for all $\lz\in(0,\fz)$,
$$\int_{\rn}\fai\lf(x,\frac{S(\pi_{\phi}(f))(x)}
{\lz}\r)\,dx\ls\sum_j
\fai\lf(B_j,\frac{|\lz_j|}{\lz\|\chi_{B_j}\|_{L^\fai(\rn)}}\r),$$
which, together with \eqref{3.2}, implies that $\pi_\phi(f)\in H_{\fai,S}(\rn)$ and
$$\|\pi_{\phi}(f)\|_{H_{\fai,S}(\rn)}\ls
\blz(\{\lz_j\az_j\}_j)\ls\|f\|_{T_{\fai}(\rr^{n+1}_+)}$$
and hence completes the proof of (ii).

Now we prove \eqref{4.7}. For any $x\in\rn$,
by H\"older's inequality, the moment condition of $\az$ and
the Taylor remainder theorem, we see that
\begin{eqnarray}\label{4.8}
\hs\hs\hs S(\az)(x)&&\le\lf\{\int_0^{r_B}\int_{B(x,t)}|\phi_t\ast \az(y)|^2
\,\frac{dy\,dt}{t^{n+1}}\r\}^{1/2}\\ \nonumber
&&\hs+\lf\{\int_{r_B}^{\fz}\int_{B(x,t)}\lf[\int_\rn\frac{1}{t^n}
\lf\{\phi\left(\frac{y-z}{t}\r)-P_{\phi}^s
\left(\frac{y-z}{t}\r)\r\}\az(z)\,dz\r]^2\,\frac{dy\,dt}{t^{n+1}}\r\}^{1/2}\\ \nonumber
&&\le\sum_{j=0}^\fz\lf\{\int_0^{r_B}\int_{B(x,t)}\lf|\phi_t\ast
\lf(\az\chi_{U_j(B)}\r)(y)\r|^2\,\frac{dy\,dt}{t^{n+1}}\r\}^{1/2}\\ \nonumber
&&\hs+\sum_{j=0}^\fz\sum_{\gz\in\zz^n_+,\,|\gz|=s+1}
\lf\{\int_{r_B}^{\fz}\int_{B(x,t)}
\lf[\int_{\rn}\frac{1}{t^n}\r.\r.\\ \nonumber
&&\hs\times\lf|(\partial_x^\gz\phi)\lf(
\frac{\tz(y-z)+(1-\tz)(y-x_B)}{t}\r)\r|\lf|\frac{z-x_B}{t}\r|^{s+1}\\ \nonumber
&&\hs\times\lf|\lf(\az\chi_{U_j(B)}\r)(z)\r|\,dz
\Bigg]^2\,\frac{dy\,dt}{t^{n+1}}\Bigg\}^{1/2}
=:\sum_{j=0}^\fz\lf[\mathrm{E}_j(x)+\mathrm{F}_j(x)\r],
\end{eqnarray}
where $P_{\phi}^s$ denotes the Taylor expansion of $\phi$ about
$(y-x_B)/t$ with degree $s$ and $\tz\in(0,1)$.
For any $j\in\zz_+$, let $B_j:=2^j B$. Then from \eqref{4.8}, the nondecreasing property
of $\fai(x,t)$ in $t$ and Lemma \ref{l2.1}(i), we infer that
\begin{eqnarray}\label{4.9}
&&\int_\rn\fai(x,S(\lz\az)(x))\,dx\\ \nonumber
&&\hs\ls\int_{\rn}\fai\lf(x,|\lz|\sum_{j=0}^\fz
[\mathrm{E}_j(x)+\mathrm{F}_j(x)]\r)\,dx\\ \nonumber
&&\hs\ls\sum_{j=0}^\fz\lf\{
\int_{\rn}\fai\lf(x,|\lz|\mathrm{E}_j(x)\r)\,dx
+\int_{\rn}\fai\lf(x,|\lz|\mathrm{F}_j(x)\r)\,dx\r\}\\ \nonumber
&&\hs\ls\sum_{j=0}^{\fz}\sum_{i=0}^\fz\lf\{\int_{U_i(B_j)}
\fai(x,|\lz|\mathrm{E}_j(x))\,dx+\int_{U_i(B_j)}
\fai(x,|\lz|\mathrm{F}_j(x))\,dx\r\}\\ \nonumber
&&\hs=:\sum_{j=0}^{\fz}\sum_{i=0}^\fz(\mathrm{E}_{i,j}+\mathrm{F}_{i,j}).
\end{eqnarray}

When $i\in\{0,\,\ldots,\,4\}$, by the uniformly upper type 1 and lower type
$p_0$ properties of $\fai$, we see that
\begin{eqnarray}\label{4.10}
\mathrm{E}_{i,j}&&\ls
\|\chi_B\|_{L^\fai(\rn)}\int_{U_i(B_j)}\fai\lf(x,|\lz|
\|\chi_B\|_{L^\fai(\rn)}^{-1}\r)S\lf(\chi_{U_j(B)}\az\r)(x)\,dx\\
\nonumber &&\hs+
\|\chi_B\|_{L^\fai(\rn)}^{p_0}\int_{U_i(B_j)}\fai\lf(x,|\lz|
\|\chi_B\|_{L^\fai(\rn)}^{-1}\r)\lf[S\lf(\chi_{U_j(B)}\az\r)
(x)\r]^{p_0}\,dx\\
\nonumber &&=: \mathrm{G}_{i,j}+\mathrm{H}_{i,j}.
\end{eqnarray}

Now we estimate $\mathrm{G}_{i,j}$. From H\"older's inequality,
the $L^q(\rn)$-boundedness of $S$, $\fai\in\rh_{q'}(\rn)$ and Lemma \ref{l2.4}(vi),
we deduce that
\begin{eqnarray}\label{4.11}
\mathrm{G}_{i,j}&&\ls\|\chi_B\|_{L^\fai(\rn)}
\lf\{\int_{U_i(B_j)}\lf[S\lf(\chi_{U_j(B)}\az\r)(x)\r]^q\,
dx\r\}^{1/q}\\
\nonumber&&\hs\times\lf\{\int_{U_i(B_j)}\lf[\fai\lf(x,|\lz|
\|\chi_B\|_{L^\fai(\rn)}^{-1}\r)\r]^{q'}\,dx\r\}^{1/q'}\\
\nonumber &&\ls\|\chi_B\|_{L^\fai(\rn)}
\|\az\|_{L^q(U_j(B))}|2^{i+j}B|^{-1/q}\fai\lf(2^{i+j}B,|\lz|
\|\chi_B\|_{L^\fai(\rn)}^{-1}\r)\\ \nonumber
&&\ls2^{-j[(n+\epz)-nq_0]}\fai\lf(B,|\lz|
\|\chi_B\|_{L^\fai(\rn)}^{-1}\r).
\end{eqnarray}

For $\mathrm{H}_{i,j}$, by H\"older's inequality, the $L^q(\rn)$-boundedness of $S$
and the fact that $\fai\in\rh_{q'}(\rn)\subset\rh_{(q/p_0)'}(\rn)$, we see that
\begin{eqnarray*}
\mathrm{H}_{i,j}&&\ls\|\chi_B\|^{p_0}_{L^\fai(\rn)}
\lf\{\int_{U_i(B_j)}\lf[S\lf(\chi_{U_j(B)}\az\r)(x)\r]^{q}\,
dx\r\}^{p_0/q}\\
\nonumber&&\hs\times\lf\{\int_{U_i(B_j)}\lf[\fai\lf(x,|\lz|
\|\chi_B\|_{L^\fai(\rn)}^{-1}\r)\r]^{(q/p_0)'}\,dx\r\}^{1/(q/p_0)'}\\
\nonumber &&\ls\|\chi_B\|^{p_0}_{L^\fai(\rn)}
\|\az\|^{p_0}_{L^{q}(U_j(B))}
|2^{i+j}B|^{-p_0/q}\fai\lf(2^{i+j}B,|\lz|
\|\chi_B\|_{L^\fai(\rn)}^{-1}\r)\\ \nonumber
&&\ls2^{-j[(n+\epz)p_0-nq_0]}\fai\lf(B,|\lz|
\|\chi_B\|_{L^\fai(\rn)}^{-1}\r),
\end{eqnarray*}
which, together with \eqref{4.10} and \eqref{4.11}, implies that, for each
$j\in\zz_+$ and $i\in\{0,\,\ldots,\,4\}$,
\begin{eqnarray}\label{4.12}
\mathrm{E}_{i,j}\ls 2^{-j[(n+\epz) p_0-nq_0]}\fai\lf(B,|\lz|
\|\chi_B\|_{L^\fai(\rn)}^{-1}\r).
\end{eqnarray}

When $i\in\nn$ and $i\ge4$, by the uniformly upper type 1 and lower type
$p_0$ properties of $\fai$, we conclude that
\begin{eqnarray}\label{4.13}
\mathrm{E}_{i,j}&&\ls
\|\chi_B\|_{L^\fai(\rn)}\int_{U_i(B_j)}\fai\lf(x,|\lz|
\|\chi_B\|_{L^\fai(\rn)}^{-1}\r)\mathrm{E}_{j}(x)\,dx\\
\nonumber&&\hs+
\|\chi_B\|_{L^\fai(\rn)}^{p_0}\int_{U_i(B_j)}\fai\lf(x,|\lz|
\|\chi_B\|_{L^\fai(\rn)}^{-1}\r)\lf[\mathrm{E}_{j}(x)\r]^{p_0}\,dx
=: \mathrm{K}_{i,j}+\mathrm{J}_{i,j}.
\end{eqnarray}

For any given $x\in U_i(B_j)$ and $y\in B(x,t)$ with $t\in(0,r_B]$,
we see that, for any $z\in U_j(B)$, $|y-z|\gs2^{i+j}r_B$. Then from $\phi\in\cs(\rn)$
and H\"older's inequality, it follows that
\begin{eqnarray*}
\lf|\phi_t\ast\lf(\az\chi_{U_j(B)}\r)(y)\r|&&\ls\int_{U_j(B)}
\frac{t^{\epz}}{(1+|y-z|)^{n+\epz}}|\az(z)|\,dz\\
&&\ls\frac{t^{\epz}}{(2^{i+j}r_B)^{n+\epz}}
\|\az\|_{L^q(U_j(B))}|U_j(B)|^{1/q'},
\end{eqnarray*}
which implies that, for all $x\in U_i(B_j)$,
\begin{eqnarray}\label{4.14}
\mathrm{E}_{j}(x)&&\ls\frac{r_B^{\epz}
\|\az\|_{L^q(U_j(B))}|U_j(B)|^{1/q'}}{(2^{i+j}r_B)^{n+\epz}}
\ls2^{-i(n+\epz)}2^{-j(\epz+\wz\epz)}
\|\chi_B\|^{-1}_{L^\fai(\rn)}.
\end{eqnarray}
By this, H\"older's inequality and Lemma \ref{l2.4}(vi), we see  that
\begin{eqnarray}\label{4.15}
\hs \hs \mathrm{K}_{i,j}&&\ls2^{-i(n+\epz)}
2^{-j(\epz+\wz\epz)}
\fai\lf(2^{i+j}B,|\lz|
\|\chi_B\|_{L^\fai(\rn)}^{-1}\r)\\ \nonumber
&&\ls2^{-i(n+\epz-nq_0)}2^{-j(\epz+\wz\epz-nq_0)}
\fai\lf(B,|\lz|\|\chi_B\|_{L^\fai(\rn)}^{-1}\r).
\end{eqnarray}
Now we estimate $\mathrm{J}_{i,j}$. From \eqref{4.14}
and Lemma \ref{l2.4}(vi), it follows that
\begin{eqnarray}\label{4.16}
\hs\hs \mathrm{J}_{i,j}\ls2^{-ip_0(n+\epz-nq_0/p_0)}
2^{-jp_0(\epz+\wz\epz-nq_0/p_0)}
\fai\lf(B,|\lz|\|\chi_B\|_{L^\fai(\rn)}^{-1}\r).
\end{eqnarray}

By \eqref{4.13}, \eqref{4.15} and \eqref{4.16}, we know that, when $i\in\nn$
with $i\ge4$ and $j\in\zz_+$,
\begin{eqnarray}\label{4.17}
\mathrm{E}_{i,j}\ls2^{-ip_0(n+\epz-nq_0/p_0)}
2^{-jp_0(\epz+\wz\epz-nq_0/p_0)}
\fai\lf(B,|\lz|\|\chi_B\|_{L^\fai(\rn)}^{-1}\r).
\end{eqnarray}

Now we deal with $\mathrm{F}_{i,j}$.
When $i\in\{0,\,\ldots,\,4\}$, similar to the proof of \eqref{4.12}, we see that
\begin{eqnarray}\label{4.18}
\mathrm{F}_{i,j}\ls 2^{-j[(n+\epz)p_0-nq_0]}\fai\lf(B,|\lz|
\|\chi_B\|_{L^\fai(\rn)}^{-1}\r).
\end{eqnarray}
When $i\in\nn$ and $i\ge4$ and $j\in\zz_+$, for any $x\in U_i(B_j)$, $y\in B(x,t)$
with $t\in[r_B, 2^{i+j-2}r_B)$ and $z\in U_j(B)$, we know that $|z-x_B|<2^jr_B$
and  $|y-z|\ge|x-z|-|x-y|\ge2^{i+j-1}r_B-t>2^{i+j-3}r_B$. From these, we deduce that
$$|\tz(y-z)+(1-\tz)(y-x_B)|=|(y-z)-(1-\tz)(z-x_B)|>2^{i+j-4}r_B.
$$
Thus, by this and \eqref{4.1}, together with H\"older's inequality,
we know that, for all $\gz\in\zz_+^n$ with $|\gz|=s+1$,
\begin{eqnarray}\label{4.19}
\qquad\ &&\int_{r_B}^{2^{i+j-2}r_B}\int_{B(x,t)}g(y,t)\frac{dy\,dt}{t^{n+1}}\\ \nonumber
&&\hs\ls\int_{r_B}^{2^{i+j-2}r_B}\int_{B(x,t)}\lf\{
\int_{U_j(B)}\frac{t^{n+s+1+\epz}}{(2^{i+j-4}r_B)
^{n+1+s+\epz}}|z-x_B|^{s+1}|(\az\chi_{U_j(B)})(z)|\,dz\r\}^2\\ \nonumber
&&\hs\hs\times\,\frac{dy\,dt}{t^{2(n+s+1)+n+1}}\\ \nonumber
&&\hs\ls(2^{i+j}r_B)^{-2(n+s+1+\epz)}
(2^jr_B)^{2(s+1)}\|\az\|^2_{L^1(U_j(B))}
\int_{r_B}^{2^{i+j-2}r_B}t^{2\epz-1}\,dt\\ \nonumber
&&\hs\ls(2^{i+j}r_B)^{-2(n+s+1)}
(2^jr_B)^{2(s+1)}\|\az\|^2_{L^q(U_j(B))}|U_j(B)|^{2/q'}\\ \nonumber
&&\hs\ls2^{-2i(n+1+s)}2^{-2j\wz\epz}
\|\chi_B\|^{-2}_{L^\fai(\rn)},
\end{eqnarray}
where
\begin{eqnarray*}
g(y,t):=&&\lf\{\int_\rn\frac{1}{t^n}\lf|(\partial_x^\gz\phi)\lf(
\frac{\tz(y-z)+(1-\tz)(y-x_B)}{t}\r)\r|\r.\\ \nonumber
&&\hs\times\lf.\lf|\frac{z-x_B}{t}\r|^{s+1}
\lf|\lf(\az\chi_{U_j(B)}\r)(z)\r|\,dz\r\}^2.
\end{eqnarray*}
Moreover, when $t\in[2^{i+j-2}r_B,\fz)$, by $\phi\in\cs(\rn)$
and H\"older's inequality, we see that, for all $\gz\in\zz_+^n$
with $|\gz|=s+1$,
\begin{eqnarray*}
&&\int_{2^{i+j-2}r_B}^\fz\int_{B(x,t)}g(y,t)\,\frac{dy\,dt}{t^{n+1}}\\ \nonumber
&&\hs\ls(2^{j}r_B)^{2(s+1)}\|\az\|^2_{L^1(U_j(B))}
\int_{2^{i+j-2}r_B}^\fz t^{-2(n+s+1)-1}\,dt\\ \nonumber
&&\hs\ls(2^{j}r_B)^{2(s+1)}(2^{i+j-2}r_B)^{-2(n+s+1)}
\|\az\|^2_{L^q(U_j(B))}|U_j(B)|^{2/q'}\\ \nonumber
&&\hs\ls2^{-2i(n+s+1)}2^{-2j\wz\epz}
\|\chi_B\|^{-2}_{L^\fai(\rn)},
\end{eqnarray*}
which, together with \eqref{4.19}, implies that, for all $x\in U_i(B_j)$,
\begin{eqnarray}\label{4.20}
\mathrm{F}_j(x)\ls2^{-i(n+s+1)}2^{-j\wz\epz}
\|\chi_B\|^{-1}_{L^\fai(\rn)}.
\end{eqnarray}
Then from \eqref{4.20}, the uniformly lower type $p_0$
property of $\fai$ and Lemma \ref{l2.4}(vi),
it follows that, for each $i\in\nn$ with $i\ge4$ and $j\in\zz_+$,
\begin{eqnarray}\label{4.21}
\mathrm{F}_{i,j}&&\ls\int_{U_i(B_j)}\fai\lf(x,
2^{-i(n+s+1)}2^{-j\wz\epz}|\lz|
\|\chi_B\|^{-1}_{L^\fai(\rn)}\r)\,dx\\ \nonumber
&&\ls2^{-i(n+s+1)p_0}2^{-j\wz\epz p_0}\fai\lf(2^{i+j}B,|\lz|
\|\chi_B\|^{-1}_{L^\fai(\rn)}\r)\\ \nonumber
&&\ls2^{-ip_0(n+s+1-nq_0/p_0)}2^{-jp_0(\wz\epz-nq_0/p_0)}
\fai\lf(B,|\lz|\|\chi_B\|^{-1}_{L^\fai(\rn)}\r).
\end{eqnarray}

Thus, by \eqref{4.9}, \eqref{4.12}, \eqref{4.17},
\eqref{4.18}, \eqref{4.21}, $\epz>n(q_0/p_0-1)$ and $n+1+s>nq_0/p_0$, we conclude that
$$\int_{\rn}\fai\lf(x,|\lz|S(\az)(x)\r)
\,dx\ls\fai\lf(B,|\lz|\|\chi_B\|_{L^\fai(\rn)}^{-1}\r),$$
which implies that \eqref{4.7} holds, and hence
completes the proof of Proposition \ref{p4.1}.
\end{proof}

Recall that one says that $f\in\cs'(\rn)$ \emph{vanishes weakly at infinity}, if for
every $\psi\in\cs(\rn)$, $f\ast\psi_t\to0$ in $\cs'(\rn)$ as $t\to\fz$ (see, for example,
\cite[p.\,50]{fs}). Then we have the following proposition for $H_{\fai,S}(\rn)$.

\begin{proposition}\label{p4.2}
Let $\fai$ be as in Definition \ref{d2.2}, $q\in(1,\fz)$, $s$ be as in Definition
\ref{d4.1} and $\epz\in(nq(\fai)/i(\fai),\fz)$,
where $q(\fai)$ and $i(\fai)$ are respectively as in \eqref{2.3} and \eqref{2.1}.
Assume that $f\in H_{\fai,S}(\rn)$ vanishes weakly at infinity. Then there exist
$\{\lz_j\}_j\subset\cc$ and a sequence $\{\az_j\}_j$ of
$(\fai,\,q,\,s,\,\epz)$-molecules such that
$f=\sum_j\lz_j\az_j$ in both $\cs'(\rn)$ and $H_{\fai,S}(\rn)$.
Moreover, there exists a positive
constant $C$, independent of $f$, such that
$$\blz(\{\lz_j\az_j\}_j):=\inf\lf\{\lz\in(0,\fz):\ \sum_j
\fai\lf(B_j,\frac{|\lz_j|}{\lz\|\chi_{B_j}\|_{L^\fai(\rn)}}\r)\le1\r\}\le
C\|f\|_{H_{\fai,S}(\rn)},$$
where, for each $j$,
$\az_j$ associates with the ball $B_j$.
\end{proposition}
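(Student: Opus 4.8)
The plan is to realize $f$ as the image under the operator $\pi_{\phi}$ of a suitable element of the Musielak--Orlicz tent space and then to transfer the atomic decomposition of Theorem \ref{t3.1} across $\pi_{\phi}$ via Proposition \ref{p4.1}; here $\phi$ always denotes the function fixed in Definition \ref{d4.2}, so that $S=S_{1}$ and $\pi_{\phi}$ are built from the \emph{same} $\phi$. First I would invoke the Calder\'on reproducing formula: since $f$ vanishes weakly at infinity, $f=C_{\phi}\int_{0}^{\fz}\phi_{t}\ast(\phi_{t}\ast f)\,\frac{dt}{t}$ in $\cs'(\rn)$ for a suitable normalization constant $C_{\phi}$ (it is exactly this decay hypothesis that removes the usual ambiguity modulo polynomials). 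Setting $F(x,t):=(\phi_{t}\ast f)(x)$ for $(x,t)\in\rr^{n+1}_{+}$, the formula reads $f=C_{\phi}\,\pi_{\phi}(F)$, understood through the extension of $\pi_{\phi}$ to $T_{\fai}(\rr^{n+1}_{+})$ furnished by Proposition \ref{p4.1}(ii). Directly from the definitions of $\ca$ and $S$ one has $\ca(F)(x)=S(f)(x)$ for every $x\in\rn$, hence $\|F\|_{T_{\fai}(\rr^{n+1}_{+})}=\|S(f)\|_{L^{\fai}(\rn)}=\|f\|_{H_{\fai,S}(\rn)}<\fz$, so $F\in T_{\fai}(\rr^{n+1}_{+})$.

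Next I would apply Theorem \ref{t3.1} to $F$: there exist $\{\lz_{j}\}_{j}\subset\cc$ and $(\fai,\fz)$-atoms $\{a_{j}\}_{j}$, with $\supp a_{j}\subset\widehat{B_{j}}$ for balls $B_{j}\subset\rn$, such that $F=\sum_{j}\lz_{j}a_{j}$ almost everywhere on $\rr^{n+1}_{+}$ and $\blz(\{\lz_{j}a_{j}\}_{j})\ls\|F\|_{T_{\fai}(\rr^{n+1}_{+})}=\|f\|_{H_{\fai,S}(\rn)}$. Moreover, the estimate \eqref{3.3} from the proof of Corollary \ref{c3.1} (which uses neither compact support nor membership in any $T^{p}_{2}(\rr^{n+1}_{+})$), together with the $\sigma$-quasi-subadditivity of $\fai$ in Lemma \ref{l2.1}(i) and the convergence of $\sum_{j}\fai(B_{j},|\lz_{j}|\|\chi_{B_{j}}\|_{L^{\fai}(\rn)}^{-1})$ established in the proof of Theorem \ref{t3.1}, gives $\int_{\rn}\fai(x,\ca(F-\sum_{j\le N}\lz_{j}a_{j})(x))\,dx\to0$ as $N\to\fz$; the uniformly upper type $1$ property of $\fai$ (equivalently, the equivalence of modular and quasi-norm convergence in $L^{\fai}(\rn)$) then upgrades this to convergence of $\sum_{j}\lz_{j}a_{j}$ to $F$ in the quasi-norm of $T_{\fai}(\rr^{n+1}_{+})$. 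Since every $(\fai,\fz)$-atom lies in $T_{\fai}(\rr^{n+1}_{+})$ and $\pi_{\phi}$ is bounded, hence continuous, from $T_{\fai}(\rr^{n+1}_{+})$ to $H_{\fai,S}(\rn)$, it follows that $f=C_{\phi}\,\pi_{\phi}(F)=C_{\phi}\sum_{j}\lz_{j}\pi_{\phi}(a_{j})$ with convergence in $H_{\fai,S}(\rn)$.

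It remains to recognize $\az_{j}:=C_{\phi}\,\pi_{\phi}(a_{j})$ (the constant being absorbed) as molecules and to control $\blz$. In the proof of Proposition \ref{p4.1} it is shown that $\pi_{\phi}$ applied to any $(\fai,\fz)$-atom supported in $\widehat{B}$ is, up to a harmless multiplicative constant, a $(\fai,q,s,n+\tau)$-molecule associated with $B$, for every $q\in(1,\fz)$ and every $\tau\in(n[q(\fai)/i(\fai)-1],\fz)$ (the vanishing moments of the molecule coming from \eqref{4.1}); choosing the decay exponent $\tau:=\epz-n$, which is admissible precisely because $\epz>nq(\fai)/i(\fai)$, makes each $\az_{j}$ a $(\fai,q,s,\epz)$-molecule associated with $B_{j}$, up to a harmless constant. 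Finally, $\blz(\{\lz_{j}\az_{j}\}_{j})$ depends only on the $\lz_{j}$, the balls $B_{j}$, and the numbers $\|\chi_{B_{j}}\|_{L^{\fai}(\rn)}$; absorbing the harmless constants and using the uniformly upper type $1$ and uniformly lower type $p_{0}$ properties of $\fai$, one gets $\blz(\{\lz_{j}\az_{j}\}_{j})\ls\blz(\{\lz_{j}a_{j}\}_{j})\ls\|f\|_{H_{\fai,S}(\rn)}$, which finishes the proof.

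I expect the genuinely delicate step to be the first one: establishing the Calder\'on reproducing formula in $\cs'(\rn)$ for an \emph{arbitrary} $f\in H_{\fai,S}(\rn)$ vanishing weakly at infinity (rather than, say, for $f\in L^{2}(\rn)$), and reconciling the resulting $\cs'(\rn)$-identity both with the abstractly defined extension $\pi_{\phi}(F)$ on $T_{\fai}(\rr^{n+1}_{+})$ and with the $L^{q}(\rn)$-valued realization of $\pi_{\phi}$ used inside the proof of Proposition \ref{p4.1}. This is precisely where the decay hypothesis is consumed, and it requires either a careful density/approximation argument or an appeal to the Folland--Stein machinery (cf.\ \cite{fs}). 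Everything else is bookkeeping that combines Theorem \ref{t3.1}, Corollary \ref{c3.1} and Proposition \ref{p4.1} with the uniformly-type and $\aa_{\fz}(\rn)$-weight estimates used throughout the paper.
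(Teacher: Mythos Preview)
Your proposal is correct and follows essentially the same route as the paper: invoke the Calder\'on reproducing formula \eqref{4.22} (the normalization in Definition \ref{d4.2} makes your $C_{\phi}=1$), observe that $F=\phi_{t}\ast f\in T_{\fai}(\rr^{n+1}_{+})$, apply Theorem \ref{t3.1} and the $T_{\fai}$-convergence argument of Corollary \ref{c3.1}, and push the decomposition through $\pi_{\phi}$ via Proposition \ref{p4.1}(ii) to obtain molecules. Your explicit observation that only the estimate \eqref{3.3} (and not the $T^{p}_{2}$-membership) is needed from Corollary \ref{c3.1} is a welcome clarification of what the paper invokes more tersely.
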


\begin{proof}
By the assumptions of $\phi$ in Definition \ref{4.2}, $f\in\cs'(\rn)$
vanishing weakly at infinity, and \cite[Theorem 1.64]{fs}, we know that
\begin{eqnarray}\label{4.22}
f=\int_0^\fz\phi_t\ast\phi_t\ast f\,\frac{dt}t\ \text{in}\ \cs'(\rn).
\end{eqnarray}
Thus,
$f=\pi_{\phi}(\phi_t\ast f)$ in $\cs'(\rn)$.
Moreover, from $f\in H_{\fai,S}(\rn)$ and Definition \ref{d4.3},
we infer that $\phi_t\ast f\in T_{\fai}
(\rr^{n+1}_+)$, which, together with Theorem \ref{t3.1}, implies that
$\phi_t\ast f=\sum_j\lz_j a_j$ almost everywhere, where $\{\lz_j\}_j$
and $\{a_j\}_j$ are as in \eqref{3.1}. For any $\psi\in\cs(\rn)$,
by using \cite[Theorem 2.3.20]{gra1}, we know that, for any $\epz,\,R
\in(0,\fz)$ with $\epz<R$,
$$\int_\epz^R\int_\rn|\phi_t\ast\phi_t\ast f(x)\psi(x)|\,dx\,\frac{dt}{t}<\fz.$$
From this, \eqref{4.22}, $\phi_t\ast f=\sum_j\lz_j a_j$
and the fact that the collection of sets $\{\supp(a_j)\}_j$ are pairwise disjoint,
up to sets of measure zero (see Remark \ref{r3.x1}), we infer that
\begin{eqnarray}\label{4.22a}
\langle f,\psi\rangle&&=\lim_{R\to\fz,\,\epz\to0}
\lf\langle\int_\epz^R\phi_t\ast\phi_t\ast f\,\frac{dt}t,\psi\r\rangle=
\lim_{R\to\fz,\,\epz\to0}\int_\epz^R
\langle\phi_t\ast\phi_t\ast f,\psi\rangle\,\frac{dt}t\\ \nonumber
&&=\int_0^\fz\langle\phi_t\ast f,\phi_t\ast\psi\rangle\,\frac{dt}t
=\int_0^\fz\int_\rn\lf[\sum_j\lz_ja_j(x,t)\r]\phi_t\ast\psi(x)\,\frac{dx\,dt}{t}\\ \nonumber
&&=\sum_j\lz_j\int_0^\fz\int_\rn a_j(x,t)\phi_t\ast\psi(x)\,\frac{dx\,dt}{t}.
\end{eqnarray}
Moreover, by using H\"older's inequality and Proposition \ref{p4.1}(i),
similar to the proof of \eqref{4.5}, we see that, for any $\psi\in\cs(\rn)$,
$\int_0^\fz\int_\rn |a_j(x,t)\phi_t\ast\psi(x)|\,\frac{dx\,dt}{t}<\fz$,
which, together with \eqref{4.22a}, implies that, for any $\psi\in\cs(\rn)$,
$$\langle f,\psi\rangle=\sum_j\lz_j\int_\rn\pi_\phi(a_j)(x)\psi(x)\,dx.$$
Thus, $f=\sum_j\lz_j\pi_\psi(a_j)$ in $\cs'(\rn)$.
Applying Theorem \ref{t3.1}, Corollary \ref{c3.1} and Proposition
\ref{p4.1}(ii) to $\phi_t\ast f$, we further conclude that
$$f=\pi_{\phi}(\phi_t\ast f)=\sum_j\lz_j
\pi_{\phi}(a_j)=:\sum_j\lz_j\az_j\
\text{in both}\ \cs'(\rn)\ \text{and}\ H_{\fai,S}(\rn)$$
and $\blz(\{\lz_j\az_j\}_j)\ls\|\phi_t\ast f\|_{T_{\fai}(\rr^{n+1}_+)}\sim
\|f\|_{H_{\fai,S}(\rn)}$. Furthermore, by Lemma \ref{l4.1a},
we know that, for each $j$, $\az_j$ is a harmless constant multiple of a $(\fai,q,s,
n+\wz\epz)$-molecule with $\wz\epz>n[q(\fai)/i(\fai)-1]$. Letting $\epz:=n+\wz\epz$,
we then obtain the desired conclusion, which completes the proof of
Proposition \ref{p4.2}.
\end{proof}

To establish the molecular and the Lusin area function
characterizations of $H_\fai(\rn)$, we need the atomic characterization
of $H_\fai(\rn)$ obtained by Ky \cite{k}. We begin with some notions.

\begin{definition}\label{d4.7}
Let $\fai$ be as in Definition \ref{d2.2}.

(I) For each ball $B\subset\rn$, the \emph{space $L^q_\fai(B)$} with
$q\in[1,\fz]$ is defined as the set of all measurable functions
$f$ on $\rn$ supported in $B$ such that
\begin{equation*}
\|f\|_{L^q_{\fai}(B)}:=
\begin{cases}\dsup_{t\in (0,\fz)}
\lf[\frac{1}
{\fai(B,t)}\dint_{\rn}|f(x)|^q\fai(x,t)\,dx\r]^{1/q}<\fz,& q\in [1,\fz),\\
\|f\|_{L^{\fz}(B)}<\fz,&q=\fz.
\end{cases}
\end{equation*}

(II) We say that a triplet $(\fai,\,q,\,s)$ is \emph{admissible},
if $q\in(q(\fai),\fz]$ and $s\in\zz_+$ satisfying that $s\ge\lfz
n[\frac{q(\fai)}{i(\fai)}-1]\rfz$. We say that a measurable function $a$ on
$\rn$ is a \emph{$(\fai,\,q,\,s)$-atom}, if there exists a ball
$B\subset\rn$ such that

$\mathrm{(i)}$ $\supp (a)\subset B$;

$\mathrm{(ii)}$
$\|a\|_{L^q_{\fai}(B)}\le\|\chi_B\|_{L^\fai(\rn)}^{-1}$;

$\mathrm{(iii)}$ $\int_{\rn}a(x)x^{\az}\,dx=0$ for all
$\az\in\zz_+^n$ with $|\az|\le s$.

(III) The  \emph{atomic Musielak-Orlicz Hardy space},
$H^{\fai,\,q,\,s}(\rn)$, is defined as the space of all
$f\in\cs'(\rn)$ satisfying that $f=\sum_jb_j$ in $\cs'(\rn)$,
where $\{b_j\}_j$ is a sequence of multiples of
$(\fai,\,q,\,s)$-atoms with $\supp (b_j)\subset B_j$ and
$\sum_j\fai(B_j,\|b_j\|_{L^q_{\fai}(B_j)})<\fz.$
Moreover, letting
\begin{eqnarray*}
&&\blz_q(\{b_j\}_j):= \inf\lf\{\lz\in(0,\fz):\ \ \sum_j\fai\lf(B_j,
\frac{\|b_j\|_{L^q_{\fai}(B_j)}}{\lz}\r)\le1\r\},
\end{eqnarray*}
the \emph{quasi-norm} of $f\in H^{\fai,\,q,\,s}(\rn)$ is defined
by $\|f\|_{H^{\fai,\,q,\,s}(\rn)}:=\inf\lf\{\blz_q(\{b_j\}_j)\r\}$,
where the infimum is taken over all the
decompositions of $f$ as above.
\end{definition}

\begin{remark}\label{r4.x1}
Let $(\fai,\fz,s)$ be admissible. For any $\epz\in(0,\fz)$
and $q\in(1,\fz)$, by Definitions \ref{d4.7} and \ref{d4.4}, we see that
any $(\fai,\fz,s)$-atom supported in a ball $B$ is a $(\fai,q,s,\epz)$-molecule
associated with the same ball $B$.
\end{remark}

The following lemma is just \cite[Theorem 3.1]{k}.

\begin{lemma}\label{l4.1}
Let $\fai$ be as in Definition \ref{d2.2} and $(\fai,\,q,\,s)$
admissible. Then $H_\fai(\rn)=H^{\fai,\,q,\,s}(\rn)$ with
equivalent norms.
\end{lemma}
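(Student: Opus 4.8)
The plan is to prove the two inclusions $H^{\fai,\,q,\,s}(\rn)\subset H_\fai(\rn)$ and $H_\fai(\rn)\subset H^{\fai,\,q,\,s}(\rn)$ with the matching quasi-norm estimates; this follows the classical atomic-decomposition scheme for real Hardy spaces, with every estimate involving Lebesgue measure replaced by its counterpart for the family $\{\fai(\cdot,t)\}_{t\ge0}$. Since a $(\fai,\fz,s)$-atom is a fortiori a $(\fai,q,s)$-atom for every $q\in(q(\fai),\fz)$ (as $\|a\|_{L^q_\fai(B)}\le\|a\|_{L^\fz(B)}\le\|\chi_B\|_{L^\fai(\rn)}^{-1}$), in the first inclusion we may assume $q<\fz$, while the Calder\'on--Zygmund decomposition used below directly produces $(\fai,\fz,s)$-atoms and thereby covers the second inclusion for all admissible $q$. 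Throughout I fix $p_0\in(0,i(\fai))$ and $q_0\in(q(\fai),\fz)$, close to $i(\fai)$ and $q(\fai)$ respectively, so that $\fai$ is of uniformly lower type $p_0$, $\fai\in\aa_{q_0}(\rn)$, and, for the first inclusion, $(n+m(\fai)+1)p_0>nq_0$, which is possible because $m(\fai)=\lfz n[q(\fai)/i(\fai)-1]\rfz$.

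For $H^{\fai,\,q,\,s}(\rn)\subset H_\fai(\rn)$: given $f=\sum_jb_j$ in $\cs'(\rn)$ with each $b_j$ a multiple of a $(\fai,q,s)$-atom, $\supp b_j\subset B_j$, Lemma \ref{l2.1}(i) gives $\fai(x,f^\ast(x))\ls\sum_j\fai(x,b_j^\ast(x))$, so it suffices to show, for a single atom $a$ supported in $B=B(x_B,r_B)$ and normalized by $\|a\|_{L^q_\fai(B)}\le\|\chi_B\|_{L^\fai(\rn)}^{-1}$, that $\int_\rn\fai(x,a^\ast(x))\,dx\ls\fai(B,\|\chi_B\|_{L^\fai(\rn)}^{-1})$, and then sum and optimize in the defining scale using Lemma \ref{l2.1}(iii). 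On $2B$ one uses the pointwise bound $a^\ast\ls\cm(a)$, sandwiches $\fai(x,\cdot)$ between a $q_0$-th power and a constant via the uniformly upper type $1$ and lower type $p_0$ properties, and then invokes the weighted Fefferman--Stein inequality Lemma \ref{l2.4}(v) together with Lemma \ref{l2.4}(vi) to reduce to the $L^q_\fai(B)$-normalization of $a$. On $(2B)^\complement$ one first gets $\|a\|_{L^1(B)}\ls|B|\|\chi_B\|_{L^\fai(\rn)}^{-1}$ from H\"older's inequality and $\fai\in\aa_q(\rn)$, and then, combining the vanishing moments of $a$ up to order $m(\fai)$ with a Taylor expansion of each admissible $\phi$ (for $t\ls|x-x_B|$) and a crude size bound (for $t\gs|x-x_B|$), obtains $a^\ast(x)\ls[r_B/(r_B+|x-x_B|)]^{n+m(\fai)+1}\|\chi_B\|_{L^\fai(\rn)}^{-1}$; decomposing $(2B)^\complement=\cup_{i\ge2}U_i(B)$ and using the uniformly lower type $p_0$ property with Lemma \ref{l2.4}(vi), the choice $(n+m(\fai)+1)p_0>nq_0$ makes the resulting geometric series converge to a multiple of $\fai(B,\|\chi_B\|_{L^\fai(\rn)}^{-1})$.

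For $H_\fai(\rn)\subset H^{\fai,\,q,\,s}(\rn)$: given $f\in H_\fai(\rn)$, for $k\in\zz$ put $\Omega_k:=\{x\in\rn:\ f^\ast(x)>2^k\}$, open with $|\Omega_k|<\fz$, take a Whitney decomposition $\{Q_{k,i}\}_i$ of $\Omega_k$ with a subordinate smooth partition of unity $\{\xi_{k,i}\}_i$, and write $f=g^k+b^k$ with $b^k=\sum_i(f-P_{k,i})\xi_{k,i}$, where $P_{k,i}$ is the polynomial of degree at most $s$ rendering $(f-P_{k,i})\xi_{k,i}$ orthogonal, with respect to $\xi_{k,i}\,dx$, to all polynomials of degree at most $s$. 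The classical Fefferman--Stein pointwise estimates yield $\|g^k\|_{L^\fz(\rn)}\ls2^k$, so $g^k\to f$ in $\cs'(\rn)$ as $k\to\fz$ and $g^k\to0$ as $k\to-\fz$ (here $f^\ast\in L^\fai(\rn)$ forces $f^\ast<\fz$ a.e.), whence $f=\sum_{k\in\zz}(g^{k+1}-g^k)$ in $\cs'(\rn)$; moreover $g^{k+1}-g^k=\sum_i h_{k,i}$, where each $h_{k,i}$ is supported in a fixed dilate $B_{k,i}$ of $Q_{k,i}$, has vanishing moments up to order $s$, and satisfies $\|h_{k,i}\|_{L^\fz(\rn)}\ls2^k$; thus $h_{k,i}$ is, up to a harmless constant, a multiple of a $(\fai,\fz,s)$-atom with $\|h_{k,i}\|_{L^q_\fai(B_{k,i})}\ls2^k$. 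To bound the defining sum I would prove, exactly as Lemma \ref{l3.2} is proved in the excerpt, that $\sum_{k\in\zz}\fai(\Omega_k,2^k/\lz)\ls\int_\rn\fai(x,f^\ast(x)/\lz)\,dx$ for all $\lz\in(0,\fz)$, and then combine it with the bounded overlap of the enlarged Whitney cubes $B_{k,i}$, the weak-type $(1,1)$ estimate for $\cm$, and Lemma \ref{l2.4}(v),(vi) (as in the proof of Theorem \ref{t3.1}) to get $\sum_i\fai(B_{k,i},2^k/\lz)\ls\fai(\Omega_k,2^k/\lz)$, hence $\sum_{k,i}\fai(B_{k,i},\|h_{k,i}\|_{L^q_\fai(B_{k,i})}/\lz)\ls\int_\rn\fai(x,f^\ast(x)/\lz)\,dx$; choosing $\lz\sim\|f\|_{H_\fai(\rn)}$ and using Lemma \ref{l2.1}(iii) then gives $\|f\|_{H^{\fai,q,s}(\rn)}\ls\|f\|_{H_\fai(\rn)}$.

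I expect the crux of the argument to be the second inclusion, and within it two linked technical points. First, the Fefferman--Stein pointwise analysis: one must show that on each Whitney cube $Q_{k,i}$ the function $f$, after subtraction of $P_{k,i}$, is of size $\ls 2^k$ and that $g^{k+1}-g^k$ genuinely expands into boundedly-overlapping pieces supported in controlled balls with $s$ vanishing moments; this rests on comparing $f^\ast$ on $Q_{k,i}$ with its value at a nearby point of $\Omega_k^\complement$, where it is $\le 2^k$, and on estimating the coefficients of $P_{k,i}$ through the moment cancellation built into the $\xi_{k,i}$-weighted projection, and the bookkeeping of the grand maximal function under this expansion is delicate. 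Second, passing from these pointwise bounds to the summability of $\sum_{k,i}\fai(B_{k,i},\cdot)$ must be carried out using only $\fai\in\aa_\fz(\rn)$, that is, via Lemma \ref{l2.4}, with the John--Nirenberg-type bound $\sum_k\fai(\Omega_k,2^k)\ls\int_\rn\fai(x,f^\ast(x))\,dx$ as the decisive ingredient, after which the geometric decay furnished by Lemma \ref{l2.4}(vi) across dilations of Whitney cubes closes the estimate. The convergence $g^k\to f$ in $\cs'(\rn)$ and the scaling manipulations with the Luxembourg-type functionals are routine once these are in hand.
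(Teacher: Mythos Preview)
The paper does not prove Lemma~\ref{l4.1} at all: it simply records that the statement is \cite[Theorem~3.1]{k} and moves on. So there is no ``paper's own proof'' to compare against here; the result is imported wholesale from Ky's work.

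Your outline is essentially the argument Ky gives in \cite{k}. The first inclusion is handled exactly as you describe (local control of $a^\ast$ via the weighted maximal inequality for $\fai\in\aa_q(\rn)$, and off-ball decay from the vanishing moments together with the uniformly lower type $p_0$ property and Lemma~\ref{l2.4}(vi)); the second inclusion is the Calder\'on--Zygmund decomposition of a distribution in $H_\fai(\rn)$ at heights $2^k$, producing $(\fai,\fz,s)$-atoms, with the summability of $\sum_{k,i}\fai(B_{k,i},2^k/\lz)$ controlled by $\sum_k\fai(\Omega_k,2^k/\lz)$ and then by $\int_\rn\fai(x,f^\ast(x)/\lz)\,dx$. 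One small point of care in your write-up of the first inclusion: the single-atom estimate must be proved in its \emph{scaled} form, namely $\int_\rn\fai(x,|\mu|a^\ast(x)/\lz)\,dx\ls\fai(B,|\mu|/(\lz\|\chi_B\|_{L^\fai(\rn)}))$ for all $\lz\in(0,\fz)$ and $\mu\in\cc$, since $\fai$ is not homogeneous in $t$; your phrase ``optimize in the defining scale'' suggests you have this in mind, but be explicit when you write it out, as this is exactly what allows the passage from the per-atom bound to $\|f\|_{H_\fai(\rn)}\ls\blz_q(\{b_j\}_j)$.
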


Now we state the main theorem of this section as follows.

\begin{theorem}\label{t4.1}
Let $\fai$ be as in Definition \ref{d2.2}.
Assume that $s\in\zz_+$ is as in Definition \ref{d4.2}, $\uc\in(\max
\{n+s,nq(\fai)/i(\fai)\},\fz)$
and $q\in(q(\fai)[r(\fai)]',\fz)$, where $q(\fai)$,
$i(\fai)$ and $r(\fai)$ are,
respectively, as in \eqref{2.3},  \eqref{2.1} and \eqref{2.4}. Then
the following are equivalent:

{\rm(i)} $f\in H_\fai(\rn)$;

{\rm(ii)} $f\in H^{q,s,\uc}_{\fai,\mathrm{mol}}(\rn)$;

{\rm(iii)} $f\in H_{\fai,S}(\rn)$ and $f$ vanishes weakly at infinity.

Moreover, for all $f\in H_\fai(\rn)$, $\|f\|_{H_\fai(\rn)}\sim
\|f\|_{H^{q,s,\uc}_{\fai,\mathrm{mol}}(\rn)}\sim\|f\|_{H_{\fai,S}(\rn)}$, where the
implicit positive constants are independent of $f$.
\end{theorem}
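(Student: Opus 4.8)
The plan is to prove the chain of equivalences (i) $\Rightarrow$ (iii) $\Rightarrow$ (ii) $\Rightarrow$ (i), using the machinery built in Sections \ref{s3} and \ref{s4} together with Ky's atomic characterization (Lemma \ref{l4.1}). The norm equivalences will be tracked at each implication so that the final ``moreover'' statement falls out automatically.

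\textbf{Step 1: (i) $\Rightarrow$ (iii).} Suppose $f\in H_\fai(\rn)$. First I would show $f$ vanishes weakly at infinity: since $f^\ast\in L^\fai(\rn)$ and $f^\ast$ dominates $\sup_{t}|f\ast\psi_t|$ (up to normalization of $\psi$), a standard argument (as on \cite[p.\,50]{fs}) using the uniformly lower type $p_0$ property of $\fai$ and the finiteness of $\int_\rn\fai(x,f^\ast(x))\,dx$ forces $f\ast\psi_t\to0$ in $\cs'(\rn)$. Next, to get $S(f)\in L^\fai(\rn)$ with control by $\|f\|_{H_\fai(\rn)}$, I would invoke Lemma \ref{l4.1} to write $f=\sum_j b_j$ atomically with $\blz_q(\{b_j\}_j)\ls\|f\|_{H_\fai(\rn)}$, and then reduce to the molecular estimate \eqref{4.7} already proved inside Proposition \ref{p4.1}: indeed each $(\fai,q,s)$-atom is (up to a constant depending only on the structural data) a $(\fai,q,s,\uc)$-molecule for the admissible range of $\uc$, and hence $\int_\rn\fai(x,S(\lz b_j)(x))\,dx\ls\fai(B_j,|\lz|\|\chi_{B_j}\|_{L^\fai(\rn)}^{-1})$; summing via Lemma \ref{l2.1}(i) and matching $\blz_q$ with $\blz$ gives $\|f\|_{H_{\fai,S}(\rn)}\ls\|f\|_{H_\fai(\rn)}$. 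One must check that the series $\sum_j\lz_j b_j$ converges to $f$ not only in $\cs'(\rn)$ but in $H_{\fai,S}(\rn)$; this is where the hypothesis $q>q(\fai)r(\fai)/(r(\fai)-1)$ (forcing $\fai\in\rh_{q'}(\rn)$) and $\uc>\max\{n+s,nq(\fai)/i(\fai)\}$ are used, exactly as in the proof of Proposition \ref{p4.1}.

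\textbf{Step 2: (iii) $\Rightarrow$ (ii).} This is essentially Proposition \ref{p4.2}: if $f\in H_{\fai,S}(\rn)$ vanishes weakly at infinity, the Calder\'on reproducing formula \eqref{4.22} gives $f=\pi_\phi(\phi_t\ast f)$ in $\cs'(\rn)$, and since $\phi_t\ast f\in T_\fai(\rr^{n+1}_+)$ with norm $\sim\|f\|_{H_{\fai,S}(\rn)}$, Theorem \ref{t3.1} and Corollary \ref{c3.1} decompose it into $(\fai,\fz)$-atoms, whose images under $\pi_\phi$ are $(\fai,q,s,\uc)$-molecules up to a harmless constant (shown in the proof of Proposition \ref{p4.1}). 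Collecting, $f=\sum_j\lz_j\az_j$ with $\blz(\{\lz_j\az_j\}_j)\ls\|f\|_{H_{\fai,S}(\rn)}$ and, since $\sum_j\fai(B_j,\|\chi_{B_j}\|_{L^\fai(\rn)}^{-1})<\fz$ whenever $\blz(\{\lz_j\az_j\}_j)<\fz$ by the uniformly lower type property, $f\in H^{q,s,\uc}_{\fai,\mathrm{mol}}(\rn)$ with $\|f\|_{H^{q,s,\uc}_{\fai,\mathrm{mol}}(\rn)}\ls\|f\|_{H_{\fai,S}(\rn)}$.

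\textbf{Step 3: (ii) $\Rightarrow$ (i).} Here I would show every $(\fai,q,s,\uc)$-molecule $\az$ associated with a ball $B$ satisfies $\|\az\|_{H_\fai(\rn)}\ls\fai(B,\|\chi_B\|_{L^\fai(\rn)}^{-1})$-type control, by decomposing $\az=\sum_{i\ge0}\az\chi_{U_i(B)}$ and subtracting the polynomial $P_i$ of degree $\le s$ so that $\az\chi_{U_i(B)}-P_i$ has vanishing moments; the standard bookkeeping (using the $L^q$ size bounds of the molecule, the decay $2^{-i\uc}$, H\"older, $\fai\in\rh_{q'}(\rn)$, and Lemma \ref{l2.4}(vi), (vii)) shows $\az$ is a (multiple of a) finite sum of $(\fai,q,s)$-atoms plus a rapidly convergent correction, so $\az\in H^{\fai,q,s}(\rn)=H_\fai(\rn)$ by Lemma \ref{l4.1}. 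Summing over $j$ and using Lemma \ref{l2.1}(i) gives $f\in H_\fai(\rn)$ with $\|f\|_{H_\fai(\rn)}\ls\|f\|_{H^{q,s,\uc}_{\fai,\mathrm{mol}}(\rn)}$; one also checks the molecular series converges in $\cs'(\rn)$ to the same $f$. I expect this step to be the main obstacle, since the molecule-to-atom passage requires the condition $\uc>\max\{n+s,nq(\fai)/i(\fai)\}$ to make the series $\sum_i$ converge after applying the reverse H\"older and $\aa_\fz$ doubling estimates, and one must be careful that the polynomial corrections $P_i$ stay uniformly bounded on $B$. Combining the three implications with the tracked norm estimates yields all the claimed equivalences and the norm equivalence $\|f\|_{H_\fai(\rn)}\sim\|f\|_{H^{q,s,\uc}_{\fai,\mathrm{mol}}(\rn)}\sim\|f\|_{H_{\fai,S}(\rn)}$.
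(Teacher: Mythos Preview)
Your proposal is correct and uses the same essential ingredients as the paper's proof (Lemma \ref{l4.1}, Lemma \ref{l4.2}, Proposition \ref{p4.2}, the molecular Lusin estimate \eqref{4.7}, and the molecule-to-atom decomposition via polynomial corrections $P_k$). The only organizational difference is that the paper proves (i)$\Leftrightarrow$(ii) first (with (i)$\Rightarrow$(ii) being the trivial ``$(\fai,\fz,s)$-atoms are molecules'' observation) and then (ii)$\Leftrightarrow$(iii), whereas you run the cycle (i)$\Rightarrow$(iii)$\Rightarrow$(ii)$\Rightarrow$(i); the content of each arrow is the same.

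One small caveat in your Step 1: a Ky $(\fai,q,s)$-atom carries a \emph{weighted} $L^q_\fai(B)$ size condition, while a $(\fai,q,s,\uc)$-molecule carries \emph{unweighted} $L^q(U_j(B))$ bounds, so the passage ``each $(\fai,q,s)$-atom is a $(\fai,q,s,\uc)$-molecule'' is not literally correct for the same $q$. This is harmless: take $(\fai,\fz,s)$-atoms in Lemma \ref{l4.1} (as the paper does), which are trivially $(\fai,q,s,\uc)$-molecules for every $q$ and $\uc$, and then \eqref{4.7} applies. Likewise in your Step 3, be aware that after subtracting $P_k$ the paper further decomposes $\sum_k P_k$ into the pieces $b^k_\ell$ with vanishing moments (this is where $\uc>n+s$ enters), and the resulting atoms are $(\fai,\wz q,s)$-atoms for a slightly smaller $\wz q\in(q(\fai),q)$ coming from the reverse H\"older step; your outline is compatible with this but does not make it explicit.
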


To prove Theorem \ref{t4.1}, we need the following lemma.

\begin{lemma}\label{l4.2}
Let $\fai$ be as in Definition \ref{d2.2}. If $f\in H_\fai(\rn)$, then $f$ vanishes
weakly at infinity.
\end{lemma}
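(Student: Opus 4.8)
The plan is to use the definition of $H_\fai(\rn)$ through the grand maximal function to reduce the claim to the assertion that $f^\ast:=f^\ast_{m(\fai)}$ has infimum zero on $\rn$, and then to deduce from this first the pointwise decay of $f\ast\psi_t$ and afterwards, by the dominated convergence theorem, its decay in $\cs'(\rn)$. Concretely, I would fix $\psi\in\cs(\rn)$ and reduce to showing that $\int_\rn(f\ast\psi_t)(x)\ez(x)\,dx\to0$ as $t\to\fz$ for each $\ez\in\cs(\rn)$; we may assume $f\not\equiv0$ (so $f^\ast\not\equiv0$ and $\|f\|_{H_\fai(\rn)}\in(0,\fz)$), and then choose $c\in(0,\fz)$ with $\psi/c\in\cs_{m(\fai)}(\rn)$, so that the definition of $f^\ast$ gives $|f\ast\psi_t(y)|\le c\,f^\ast(z)$ whenever $z\in B(y,t)$, and hence $|f\ast\psi_t(y)|\le c\inf_{z\in B(y,t)}f^\ast(z)$ for all $y\in\rn$ and $t\in(0,\fz)$.

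The key step is to prove $\inf_{z\in\rn}f^\ast(z)=0$. Suppose not; then $f^\ast\ge a$ on $\rn$ for some $a\in(0,\fz)$ and, setting $b_0:=a/\|f^\ast\|_{L^\fai(\rn)}\in(0,\fz)$, the monotonicity of $\fai(x,\cdot)$ together with Lemma \ref{l2.1}(iii) yield $\int_\rn\fai(x,b_0)\,dx\le\int_\rn\fai(x,f^\ast(x)/\|f^\ast\|_{L^\fai(\rn)})\,dx=1$. On the other hand, by Lemma \ref{l2.4}(iv) there exists $q_1\in(1,\fz)$ with $\fai\in\rh_{q_1}(\rn)$, so that Lemma \ref{l2.4}(vii), applied with $B_1=B(0,1)$, $B_2=B(0,s)$ and $t=b_0$, gives $\fai(B(0,s),b_0)\gs\fai(B(0,1),b_0)\,s^{n(q_1-1)/q_1}\to\fz$ as $s\to\fz$; since $\fai(B(0,1),b_0)\in(0,\fz)$, this forces $\int_\rn\fai(x,b_0)\,dx=\fz$, a contradiction. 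Thus $\inf_{z\in\rn}f^\ast(z)=0$, and since, for each fixed $x$, $t\mapsto\inf_{z\in B(x,t)}f^\ast(z)$ is nonincreasing with infimum $\inf_{z\in\rn}f^\ast(z)$, the bound above gives $f\ast\psi_t(x)\to0$ as $t\to\fz$ for every $x\in\rn$.

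To upgrade pointwise decay to decay in $\cs'(\rn)$, I would apply the dominated convergence theorem to $\int_\rn(f\ast\psi_t)(x)\ez(x)\,dx$. For $t\ge1$ the bound above gives $|f\ast\psi_t(x)|\le c\,G(x)$, where $G(x):=\inf_{z\in B(x,1)}f^\ast(z)$. Fixing $r:=p_0/q_0$ with $p_0\in(0,i(\fai))$ and $q_0\in(q(\fai),\fz)$ (so $\fai$ is of uniformly lower type $p_0$ and $\fai\in\aa_{q_0}(\rn)$), the computation in the proof of Proposition \ref{p3.1}, carried out with $f^\ast$ in place of $\ca(f)$ and using $\int_\rn\fai(x,f^\ast(x)/\|f^\ast\|_{L^\fai(\rn)})\,dx=1$, gives $\int_{B(0,s)}[f^\ast(z)]^r\,dz\ls s^n$ for all $s\in[1,\fz)$. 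Combining this with $G(x)\le\lf\{|B(x,1)|^{-1}\int_{B(x,1)}[f^\ast(z)]^r\,dz\r\}^{1/r}$ and $B(x,1)\subset B(0,|x|+1)$ yields $G(x)\ls(1+|x|)^{n/r}$ for all $x\in\rn$, so that $G\ez\in L^1(\rn)$ for every $\ez\in\cs(\rn)$; the dominated convergence theorem then gives $\int_\rn(f\ast\psi_t)(x)\ez(x)\,dx\to0$ as $t\to\fz$, which is precisely the assertion that $f$ vanishes weakly at infinity.

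I expect the main obstacle to be the step $\inf_{z\in\rn}f^\ast(z)=0$: in the classical unweighted case it is immediate from $f^\ast\in L^p(\rn)$ with $p\in(0,\fz)$, whereas in the Musielak--Orlicz setting one must rule out the possibility that $L^\fai(\rn)$ contains nontrivial functions bounded away from zero, which is exactly where the reverse H\"older condition enters, in the form $\fai(B(0,s),b)\to\fz$ ($s\to\fz$) for each $b\in(0,\fz)$ coming from Lemma \ref{l2.4}(vii). A secondary point requiring the structure of $\fai$ is the integrability of the dominating function $G\ez$, which rests on the local $L^r$-bound for $f^\ast$ deduced as in Proposition \ref{p3.1} from the $\aa_{q_0}(\rn)$ condition.
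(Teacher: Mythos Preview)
Your argument is correct. The route you take differs from the paper's in two respects.

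For the pointwise decay $f\ast\psi_t(x)\to0$, the paper does not pass through the statement $\inf_{z\in\rn}f^\ast(z)=0$. Instead it argues directly: from $|f\ast\phi_t(x)|\ls f^\ast(y)$ for all $y\in B(x,t)$ and the uniformly upper type $1$ and lower type $p$ properties of $\fai$, one obtains
\[
\min\{|f\ast\phi_t(x)|^p,|f\ast\phi_t(x)|\}\ls[\fai(B(x,t),1)]^{-1}\int_{B(x,t)}\fai(y,f^\ast(y))\,dy\ls[\fai(B(x,t),1)]^{-1},
\]
and then uses $\fai(B(x,t),1)\to\fz$ as $t\to\fz$. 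This is the same underlying fact you extract from $\rh_{q_1}(\rn)$, but packaged as a single weighted-average estimate rather than as a contradiction argument about $\inf f^\ast$. Your approach is perhaps more transparent about \emph{why} the decay happens (no nontrivial function bounded away from zero can lie in $L^\fai(\rn)$), while the paper's is shorter.

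For the passage from pointwise decay to convergence in $\cs'(\rn)$, the paper simply asserts it, whereas you supply a dominated-convergence argument with an explicit polynomially growing majorant $G$ built from the local $L^{p_0/q_0}$ bound on $f^\ast$ (as in Proposition~\ref{p3.1}). Your treatment here is more complete; this extra care is the main thing your longer proof buys.
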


\begin{proof} Observe that, for any $f\in H_\fai(\rn)$, $\phi\in\cs(\rn)$, $x\in\rn$,
$t\in(0,\fz)$ and $y\in B(x,t)$, it holds that
$|f\ast\phi_t(x)|\ls f^\ast(y)$, where $f^\ast$ is as in
Definition \ref{d4.1}.
Hence, since, for any $p\in(0,i(\vz))$,
$\fai$ is of uniformly lower type $p$, then by the
uniformly lower type $p$ and upper type $1$ properties
of $\fai$ and Lemma \ref{l2.1}(iii), we conclude that,
for all $x\in\rn$,
\begin{eqnarray*}
\min\{|f\ast\phi_t(x)|^p,|f\ast\phi_t(x)|\}
&&\ls [\fai(B(x,t),1)]^{-1}\int_{B(x,t)}\fai(y,1)
\min\{[f^\ast(y)]^p,f^\ast(y)\}\,dy\\
&&\ls[\fai(B(x,t),1)]^{-1}\int_{B(x,t)}\vz(y,f^\ast(y))\,dy\\
&&\ls[\fai(B(x,t),1)]^{-1}\max\{\|f\|_{H_\fai(\rn)}^p,\|f\|_{H_\fai(\rn)}\}\to0,
\end{eqnarray*}
as $t\to \fz$. That is, $f$ vanishes weakly at infinity,
which completes the proof of Lemma \ref{l4.2}.
\end{proof}

Now we prove Theorem \ref{t4.1} by using Proposition \ref{p4.2}, and
Lemmas \ref{l4.1} and \ref{l4.2}.

\begin{proof}[Proof of Theorem \ref{t4.1}]
The proof of Theorem \ref{t4.1} is divided into the following three steps.

\textbf{Step I}. $\mathrm{(i)}\Rightarrow\mathrm{(ii)}$.

By Lemma \ref{l4.1}, we see that $H_{\fai}(\rn)=H^{\fai,\,\fz,\,s}(\rn)$. Moreover,
from the definitions of $H^{q,s,\uc}_{ \fai,\mathrm{mol}}(\rn)$
and $H^{\fai,\,\fz,\,s}(\rn)$, together with Remark \ref{r4.x1},
we infer that $H^{\fai,\,\fz,\,s}(\rn)\hookrightarrow
H^{q,s,\uc}_{ \fai,\mathrm{mol}}(\rn)$. Thus,
$H_{\fai}(\rn)\hookrightarrow H^{q,s,\uc}_{ \fai,\mathrm{mol}}(\rn)$, which completes
the proof of Step I.

\textbf{Step II}. $\mathrm{(ii)}\Rightarrow\mathrm{(i)}$.

Let $\az$ be any fixed $(\fai,\,q,\,s,\,\uc)$-molecule associated with a ball
$B:=B(x_B,\,r_B)$. We now prove that $\az$ is an infinite linear combination
of $(\fai,\wz{q},s)$-atoms and $(\fai,\fz,s)$-atoms,
where $\wz{q}$ is determined later such that $(\fai,\wz{q},s)$
is admissible. To this end, for all $k\in\zz_+$, let $\az_{k}:
=\az\chi_{U_k(B)}$ and $\cp_k$ be
the \emph{linear vector space} generated by the  set
$\{x^{\az}\chi_{U_k(B)}\}_{|\az|\le s}$ of polynomials. It is well known (see,
for example, \cite{tw80}) that there exists a unique
\emph{polynomial} $P_k\in \cp_k$ such that, for all
multi-indices $\bz$ with $|\bz|\le s$,
\begin{eqnarray}\label{4.23}
\dint_{\rn}x^\bz\lf[\az_{k}(x)-P_k(x)\r]\,dx=0,
\end{eqnarray}
where $P_k$ is given by the following formula
\begin{eqnarray}\label{4.24}
P_k:=\sum_{\bz\in\zz_+^n,\,|\bz|\le
s}\lf\{\frac{1}{|U_k(B)|}\int_{\rn}
x^{\bz}\,\az_{k}(x)\,dx\r\}\,
Q_{\bz,k}
\end{eqnarray}
and $Q_{\bz,k}$ is the unique polynomial in $\cp_k$
satisfying that, for all multi-indices $\bz$ with $|\bz|\le s$ and the
\emph{dirac function} $\dz_{\gz,\bz}$,
\begin{eqnarray}\label{4.25}
\int_{\rn}x^{\gz}\,Q_{\bz,k}(x)\,dx=|U_k(B)|\,\dz_{\gz,\bz}.
\end{eqnarray}

By the assumption  $q>q(\fai)[r(\fai)]'$, we know that
there exists $\wz q\in(q(\fai),\fz)$ such that $q>\wz q[r(\fai)]'$
and hence $\fai\in\rh_{(\frac{q}{\wz q})'}(\rn)$.
Now we prove that, for each $k\in\zz_+$, $\az_k-P_k$ is a harmless constant multiple
of a $(\fai,\,\wz q,\,s)$-atom and $\sum_{k\in\zz_+}P_k$ can be divided
into an infinite linear combination of $(\fai,\,\fz,\,s)$-atoms.

It was proved in \cite[p.\,83]{tw80} that, for all $k\in\zz_+$,
\begin{eqnarray*}
\sup_{x\in U_k(B)}|P_k(x)|\ls\frac{1}{|U_k(B)|}\|\az_k\|_{L^1(\rn)},
\end{eqnarray*}
which, together with Minkowski's inequality, H\"older's inequality
and Definition \ref{d4.4}(i), implies that
\begin{eqnarray}\label{4.26}
\|\az_k-P_k\|_{L^q(\rn)}&&\ls
\|\az_k\|_{L^q(2^k B)}+\|P_k\|_{L^q(2^kB)}\ls\|\az_k\|_{L^q(U_k(B))}\\ \nonumber
&&\ls2^{-k\uc}|2^kB|^{1/q}\|\chi_B\|^{-1}_{L^\fai(\rn)}.
\end{eqnarray}
From this, H\"older's inequality and $\fai\in\rh_{(\frac{q}{\wz q})'}(\rn)$,
it follows that
\begin{eqnarray*}
&&\lf\{\frac{1}{\fai(2^kB,t)}\int_{2^kB}|\az_k(x)-P_k(x)|^{\wz q}\fai(x,t)
\,dx\r\}^{1/\wz q}\\
&&\hs\ls\frac{1}{[\fai(2^kB,t)]^{1/\wz q}}
\|\az_k-P_k\|_{L^q(2^kB)}
\lf\{\int_{2^kB}[\fai(x,t)]^{(\frac{q}{\wz q})'}\,dx\r\}^{\frac{1}{\wz q
(\frac{q}{\wz q})'}}
\ls2^{-k\uc}\|\chi_B\|^{-1}_{L^\fai(\rn)},
\end{eqnarray*}
which implies that there exists a positive constant $\wz C$ such that, for all $\zz_+$,
\begin{eqnarray}\label{4.27}
\|\az_k-P_k\|_{L^\fai_{\wz q}(2^k B)}\le\wz C
2^{-k\uc}\|\chi_B\|^{-1}_{L^\fai(\rn)}.
\end{eqnarray}
For any $k\in\zz$, let $\mu_k:=\wz C2^{-k\uc}\|\chi_{2^kB}\|_{L^\fai(\rn)}/
\|\chi_{B}\|_{L^\fai(\rn)}$ and
$$a_k:=2^{k\uc}\|\chi_B\|_{L^\fai(\rn)}(\az_k-P_k)/(\wz C\|\chi_{2^kB}\|_{L^\fai(\rn)}).$$
This, combined with \eqref{4.23}, \eqref{4.27} and the fact that $\supp(\az_k-P_k)\subset2^kB$,
implies that, for each $k\in\zz_+$, $a_k$ is a $(\fai,\,\wz q,\,s)$-atom
and $\az_k-P_k=\mu_k a_k$. Moreover, by Minkowski's inequality, \eqref{4.26}
and $\uc>nq(\fai)/i(\fai)\ge n$, we see that
\begin{eqnarray*}
\lf\|\sum_{k\in\zz_+}(\az_k-P_k)\r\|_{L^{q}(\rn)}&&\le\sum_{k\in\zz_+}
\|\az_k-P_k\|_{L^{q}(\rn)}\\
&&\ls\sum_{k\in\zz_+}2^{-k(\uc-n/q)}|B|^{1/q}\|
\chi_B\|^{-1}_{L^\fai(\rn)}\ls|B|^{1/q}\|\chi_B\|^{-1}_{L^\fai(\rn)},
\end{eqnarray*}
which, together with $\az_k-P_k=\mu_k a_k$ for any $k\in\zz_+$,
implies that
\begin{equation}\label{4.27b}
\sum_{k\in\zz_+}(\az_k-P_k)=\sum_{k\in\zz_+}\mu_ka_k \ \text{in}\ L^{q}(\rn).
\end{equation}

Moreover, for any $j\in\zz_+$ and $\ell\in\zz_+^n$, let $$N^j_\ell:=\sum_{k=j}^{\fz}
|U_k(B)|\langle\az_k,x^\ell
\rangle:=\sum_{k=j}^\fz\int_{U_k(B)}\az_k(x)x^\ell\,dx.$$ Then for any
$\ell\in\zz_+^n$ with $|\ell|\le s$, it holds that
\begin{eqnarray}\label{4.27a}
N^0_\ell=\sum_{k=0}^\fz\int_{U_k(B)}\az(x)x^\ell\,dx=0.
\end{eqnarray}
Therefore, by H\"older's inequality and the assumption $\uc\in(n+s,\fz)$, together with
Definition \ref{d4.4}(i), we see
that, for all $j\in\zz_+$ and $\ell\in\zz_+^n$ with $|\ell|\le s$,
\begin{eqnarray}\label{4.28}
\qquad|N^j_\ell|&&\le\sum_{k=j}^\fz\int_{U_k(B)}|\az_j(x)x^\ell|\,dx
\le\sum_{k=j}^\fz(2^kr_B)^{|\ell|}|2^kB|^{1/q'}
\|\az_k\|_{L^q(U_j(B))}\\ \nonumber
&&\le\sum_{k=j}^\fz2^{-k(\uc-n-|\ell|)}|B|^{1+|\ell|/n}
\|\chi_B\|^{-1}_{L^\fai(\rn)}\ls2^{-j(\uc-n-|\ell|)}|B|^{1+|\ell|/n}
\|\chi_B\|^{-1}_{L^\fai(\rn)}.
\end{eqnarray}

Furthermore, from \eqref{4.25} and the homogeneity, we deduce that, for all
$j\in\zz_+$, $\bz\in\zz_+^n$ with $|\bz|\le s$ and $x\in\rn$, $|Q_{\bz,\,j}(x)|
\ls \lf(2^jr_B\r)^{-|\bz|}$,
which, combined with \eqref{4.28}, implies that, for all $j\in\zz_+$, $\ell\in\zz_+^n$
with $|\ell|\le s$ and $x\in\rn$,
\begin{eqnarray}\label{4.29}
|U_j(B)|^{-1}\lf|N^j_\ell Q_{\ell,j}(x)\chi_{U_j(B)}(x)\r|&&\ls2^{-j\uc}
\|\chi_B\|^{-1}_{L^\fai(\rn)}.
\end{eqnarray}
Moreover, by \eqref{4.24} and the definition of $N^j_\ell$, together with
\eqref{4.27a}, we know that
\begin{eqnarray}\label{4.29a}
\qquad\sum_{k=0}^\fz P_k&&=\sum_{\ell\in\zz_+^n,|\ell|\le s}
\sum_{k=0}^\fz\sum_{j=1}^k\langle
\az_j,x^\ell\rangle|U_j(B)|\\ \nonumber
&&=\sum_{\ell\in\zz_+^n,|\ell|\le s}\sum_{k=0}^\fz N^{k+1}_\ell
\lf[|U_k(B)|^{-1}Q_{\ell,k}\chi_{U_k(B)}-
|U_{k+1}(B)|^{-1}Q_{\ell,k+1}\chi_{U_{k+1}(B)}\r]\\ \nonumber
&&=: \sum_{\ell\in\zz_+^n,|\ell|\le s}\sum_{k=0}^\fz b^k_\ell.
\end{eqnarray}
From \eqref{4.29}, it follows that, there exists a positive constant $C_0$ such that,
for all $k\in\zz_+$ and $\ell\in\zz_+^n$ with $|\ell|\le s$,
\begin{eqnarray}\label{4.30}
\|b^k_\ell\|_{L^\fz(\rn)}\le C_02^{-j\uc}
\|\chi_B\|^{-1}_{L^\fai(\rn)}.
\end{eqnarray}
For any $k\in\zz_+$ and $\ell\in\zz_+^n$ with $|\ell|\le s$,
let $\mu^k_\ell:=C_02^{-j\uc}\|\chi_{2^{k+1}B}\|_{L^\fai(\rn)}/\|\chi_{B}\|_{L^\fai
(\rn)}$ and $a^k_\ell:=2^{-j\uc}b^k_\ell\|\chi_{B}\|_{L^\fai(\rn)}/
(C_0\|\chi_{2^{k+1}B}\|_{L^\fai(\rn)})$. Then
$\|a^k_\ell\|_{L^\fz(\rn)}\le\|\chi_{2^{k+1}B}\|^{-1}_{L^\fai(\rn)}$
By \eqref{4.25} and the definitions of $b^k_\ell$ and $a^k_\ell$,
we see that, for all $\gz\in\zz_+^n$ with $|\gz|\le s$,
$\int_{\rn}a^k_\ell(x)x^\gz\,dx=0$. Obviously, $\supp(a^k_\ell)\subset 2^{k+1}B$.
Thus, $a^k_\ell$ is a $(\fai,\fz,s)$-atom and hence
a $(\fai,\wz q,s)$-atom, and $b^k_\ell=\mu^k_\ell a^k_\ell$.
Moreover, similar to \eqref{4.27b}, we see that
$\sum_{k=0}^\fz P_k=
\sum_{\ell\in\zz_+^n,|\ell|\le s}\sum_{k=0}^\fz \mu^k_\ell a^k_\ell$
in $L^q(\rn)$. By this and \eqref{4.27b}, we conclude that
\begin{equation}\label{4.31a}
\az=\sum_{k=0}^\fz(\az_k-P_k)+\sum_{k=0}^\fz P_k
=\sum_{k=0}^\fz\mu_k a_k+\sum_{\ell\in\zz_+^n,|\ell|\le s}
\sum_{k=0}^\fz \mu^k_\ell a^k_\ell
\end{equation}
holds true in $L^q(\rn)$ and hence in $\cs'(\rn)$.

Furthermore, from the
assumption $\uc\in(nq(\fai)/i(\fai),\fz)$, we infer that there exist
$p_0\in(0,i(\fai))$ and $q_0\in(q(\fai),\fz)$ such that $\uc>nq_0/p_0$.
Then $\fai\in\aa_{q_0}(\rn)$ and $\fai$ is of uniformly lower type $p_0$.
By \eqref{4.27}, \eqref{4.30}, the uniformly lower type $p_0$ property of $\fai$,
Lemma \ref{l2.4}(vi) and $\uc>nq_0/p_0$, we conclude that, for all $\lz\in(0,\fz)$,
\begin{eqnarray}\label{4.31}
&&\sum_{k\in\zz_+}\fai\lf(2^kB,\lz\|\mu_k a_k\|_{L^{\wz q}_{\fai}(2^kB)}\r)
+\sum_{|\ell|\le s}\sum_{k\in\zz_+}\fai\lf(2^{k+1}B,\lz
\|\mu^k_\ell a^k_\ell\|_{L^{\wz q}_
{\fai}(2^{k+1}B)}\r)\\ \nonumber
&&\hs\ls\sum_{k\in\zz_+}2^{-p_0k\uc}
\fai\lf(2^{k+1}B,\lz\|\chi_B\|^{-1}_{L^\fai(B)}\r)\\ \nonumber
&&\hs\ls\sum_{k\in\zz_+}2^{-p_0k(\uc-nq_0/p_0)}
\fai\lf(B,\lz\|\chi_B\|^{-1}_{L^\fai(B)}\r)
\ls\fai\lf(B,\lz\|\chi_B\|^{-1}_{L^\fai(B)}\r).
\end{eqnarray}

Let $f\in H^{q,s,\uc}_{\fai,\mathrm{mol}}(\rn)$. Then, by Definition \ref{d4.5}, we know that
there exist $\{\lz_j\}_j\subset\cc$ and a sequence $\{\az_j\}_j$ of
$(\fai,q,s,\uc)$-molecules such that
$f=\sum_{j}\lz_j\az_j$ in $\cs'(\rn)$ and
\begin{eqnarray}\label{4.32}
\|f\|_{H^{q,s,\uc}_{\fai,\mathrm{mol}}(\rn)}\sim
\blz(\{\lz_j\az_j\}_j).
\end{eqnarray}
Then by \eqref{4.31a}, we know that, for
each $j$, there exist $\{\mu_{j,k}\}_k\subset\cc$ and
a sequence $\{a_{j,k}\}_k$ of $(\fai,\wz q,s)$-atoms
such that $\az_j=\sum_{k}\mu_{j,k}a_{j,k}$ in $\cs'(\rn)$.
Thus, $f=\sum_j\sum_k\lz_j\mu_{j,k}a_{j,k}$ in $\cs'(\rn)$, which,
together with Lemma \ref{l4.1}, implies that
$f\in H_{\fai}(\rn)$. Moreover, from \eqref{4.31} and \eqref{4.32}, it follows that
$$\|f\|_{H_{\fai}(\rn)}\ls\blz(\{\lz_j \mu_{j,k}a_{j,k}\}_{j,k})
\ls\blz(\{\lz_j\az_j\}_j)\sim
\|f\|_{H^{q,s,\uc}_{\fai,\mathrm{mol}}(\rn)},$$
which completes the proof of Step II.

\textbf{Step III}. $\mathrm{(ii)}\Leftrightarrow\mathrm{(iii)}$.

Let $f\in H_{\fai,S}(\rn)$ vanishing weakly at infinity. Then from Proposition
\ref{p4.2}, it follows that $f\in H^{q,s,\uc}_{ \fai,\mathrm{mol}}(\rn)$ and
$\|f\|_{H^{q,s,\uc}_{ \fai,\mathrm{mol}}(\rn)}\ls\|f\|_{H_{\fai,S}(\rn)}$.

Conversely, assume that $f\in H^{q,s,\uc}_{ \fai,\mathrm{mol}}(\rn)$. Then by Steps I and
II, we know that $H^{q,s,\uc}_{\fai,\mathrm{mol}}(\rn)=H_\fai(\rn)$ with equivalent
norms, which, together with
Lemma \ref{l4.2}, implies that $f$ vanishes weakly at infinity.  Moreover, from \eqref{4.7},
together with a standard argument, we infer that
$f\in H_{\fai,S}(\rn)$.
This finishes the proof of Step III and hence Theorem \ref{t4.1}.
\end{proof}

\begin{remark}\label{r4.3} By Theorem \ref{t4.1}, we see that
the Musielak-Orlicz Hardy space $H_{\fai,S}(\rn)$ is independent of the choices
of $\phi$ as in Definition \ref{d4.2},
and the Musielak-Orlicz Hardy space $H^{q,s,\uc}_{\fai,\mathrm{mol}}(\rn)$ is independent
of the choices of $q$, $s$ and $\uc$ as in Theorem \ref{t4.1}.
\end{remark}

\section{The Carleson measure characterization of $\mathrm{BMO}_{\fai}(\rn)$\label{s5}}

\hskip\parindent In this section, we first recall the notion of the Musielak-Orlicz
$\bbmo$-type space $\mathrm{BMO}_{\fai}(\rn)$ from \cite{k} and introduce the $\fai$-Carleson
measure. Then we establish the $\fai$-Carleson measure characterization of
$\mathrm{BMO}_{\fai}(\rn)$ by using the Lusin area function characterization
of $H_{\fai}(\rn)$ obtained in Theorem \ref{t4.1}.

The following Musielak-Orlicz $\bbmo$-type space $\bbmo_{\fai}(\rn)$ was
introduced by Ky \cite{k}.

\begin{definition}\label{d5.1}
Let $\fai$ be as in Definition \ref{d2.2}. We say that a locally integrable function
$f$ on $\rn$ is in the  \emph{space $\bbmo_{\fai}(\rn)$}, if
\begin{eqnarray*}
\|f\|_{\bbmo_{\fai}
(\rn)}&:=&\sup_{B\subset\rn}\frac{1}{\|\chi_{B}
\|_{L^{\fai}(\rn)}}
\int_{B}\lf|f(x)-f_B\r|\,dx<\fz,
\end{eqnarray*}
where above and in what follows  the supremum is taken over all the balls
$B\subset\rn$ and
\begin{eqnarray}\label{5.x1}
f_B:=\frac{1}{|B|}\int_B f(y)\,dy.
\end{eqnarray}
\end{definition}

\begin{definition}\label{d5.2}
Let $\fai$ be as in Definition \ref{d2.2}. We say that a measure $d\mu$ on $\rr^{n+1}_+$
is a \emph{$\fai$-Carleson measure}, if
$$\|d\mu\|_{\fai}:=\sup_{B\subset\rn}\frac{|B|^{1/2}}
{\|\chi_B\|_{L^\fai(\rn)}}\lf\{\int_{\widehat{B}}
\lf|d\mu(x,t)\r|\r\}^{1/2}<\fz,
$$
where the supremum is taken over all balls $B\subset\rn$ and $\widehat{B}$
denotes the tent over $B$.
\end{definition}

\begin{theorem}\label{t5.1}
Let $\fai$ be as in Definition \ref{d2.2} and $\phi$ as in Definition \ref{d4.2}.

{\rm (i)} Assume that $b\in\bbmo_\fai(\rn)$ and
$q(\fai)[r(\fai)]'\in(1,2)$. Then
$$d\mu(x,t):=|\phi_t\ast b(x)|^2\frac{dx\,dt}{t}$$
is a $\fai$-Carleson measure on $\rr^{n+1}_+$; moreover,
there exists a positive constant $C$, independent of $b$, such that
$\|d\mu\|_{\fai}\le C\|b\|_{\bbmo_\fai(\rn)}$.

{\rm(ii)} Assume that $nq(\fai)<(n+1)i(\fai)$.
Let $b\in L^2_{\loc}(\rn)$ and
$d\mu(x,t):=|\phi_t\ast b(x)|\frac{dxdt}{t}$
be a $\fai$-Carleson measure
on $\rr^{n+1}_+$. Then $b\in\bbmo_\fai(\rn)$ and, moreover, there exists a
positive constant $C$, independent of $b$, such that
$\|b\|_{\bbmo_\fai(\rn)}\le C\|d\mu\|_{\fai}$.
\end{theorem}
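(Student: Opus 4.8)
The plan is to prove (i) directly and (ii) by duality, in both cases decomposing a locally integrable function into a mean part (which is annihilated by $\phi_t\ast$ since $\int_{\rn}\phi=0$, by \eqref{4.1} with $s\ge0$), a local part, and a far-away part summed over the dilated balls $2^kB$; for the far part the oscillation of $b$ over $2^kB$ is estimated by $\ls k\,\|b\|_{\bbmo_\fai(\rn)}\|\chi_{2^kB}\|_{L^\fai(\rn)}$, and the ratios $\|\chi_{2^kB}\|_{L^\fai(\rn)}/\|\chi_B\|_{L^\fai(\rn)}$ grow at most polynomially in $2^k$ by Lemma \ref{l2.4}(vi) together with the type estimates of $\fai$.

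For (i): fix a ball $B$ and write $b=(b-b_{2B})\chi_{2B}+(b-b_{2B})\chi_{(2B)^\complement}+b_{2B}$; the constant term contributes nothing to $d\mu$. For the local term, the normalization $\int_0^\fz|\widehat\phi(t\xi)|^2\,dt/t=1$ and Plancherel's theorem give $\int_{\widehat B}|\phi_t\ast((b-b_{2B})\chi_{2B})(x)|^2\,dx\,dt/t\le\|(b-b_{2B})\chi_{2B}\|^2_{L^2(\rn)}=\int_{2B}|b-b_{2B}|^2\,dx$, and a John--Nirenberg-type inequality for $\bbmo_\fai(\rn)$ (following from the classical one and $\fai\in\aa_\fz(\rn)$) converts this into $\ls\|b\|_{\bbmo_\fai(\rn)}^2\|\chi_B\|_{L^\fai(\rn)}^2/|B|$. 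For the far term, the Schwartz decay of $\phi$ and the above control of $\int_{2^{k+1}B}|b-b_{2B}|\,dx$ give, for $x\in B$ and $t<r_B$, $|\phi_t\ast((b-b_{2B})\chi_{(2B)^\complement})(x)|\ls\|b\|_{\bbmo_\fai(\rn)}\|\chi_B\|_{L^\fai(\rn)}t^N r_B^{-n-N}$ with $N$ large (summing the annular contributions geometrically via Lemma \ref{l2.4}(vi)); integrating its square over $\widehat B$ against $dx\,dt/t$ again yields $\ls\|b\|_{\bbmo_\fai(\rn)}^2\|\chi_B\|_{L^\fai(\rn)}^2/|B|$. Combining these and taking the supremum over $B$ gives $\|d\mu\|_\fai\ls\|b\|_{\bbmo_\fai(\rn)}$.

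For (ii): the extra assumption $nq(\fai)<(n+1)i(\fai)$ forces $\lfz n[q(\fai)/i(\fai)-1]\rfz=0$, so $(\fai,q,0)$ is admissible for every $q\in(q(\fai),\fz]$; in particular, for each ball $B$ and $g\in L^\fz(B)$ with $\|g\|_{L^\fz(B)}\le1$, the function $\frac{1}{2\|\chi_B\|_{L^\fai(\rn)}}(g-g_B)\chi_B$ is a multiple of a $(\fai,\fz,0)$-atom of $H_\fai(\rn)$-quasi-norm $\ls1$ (using Lemma \ref{l2.1}(iii)). Hence, by Ky's duality $(H_\fai(\rn))^\ast=\bbmo_\fai(\rn)$, it suffices to show $|\int_{\rn}bf\,dx|\ls\|d\mu\|_\fai\|f\|_{H_\fai(\rn)}$ for every finite linear combination $f=\sum_j\lz_j a_j$ of $(\fai,q,0)$-atoms, with $q$ as in Theorem \ref{t4.1} chosen $\ge2$ so that $\int bf$ is well defined for $b\in L^2_{\loc}(\rn)$. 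Such an $f$ lies in $L^2(\rn)$ and vanishes weakly at infinity, so the Calder\'on reproducing formula \eqref{4.22} gives $\int_{\rn}bf\,dx=\int_{\rr^{n+1}_+}(\phi_t\ast b)(x)(\phi_t\ast f)(x)\,\frac{dx\,dt}{t}$, the Fubini step being justified by the absolute convergence established below (the required growth control of $\phi_t\ast b$ being extracted from the $\fai$-Carleson hypothesis as in \cite{fs}); consequently $|\int_{\rn}bf\,dx|\le\int_{\rr^{n+1}_+}|\phi_t\ast b(x)|\,|\phi_t\ast f(x)|\,\frac{dx\,dt}{t}$.

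The heart of (ii) is then the estimate $\int_{\rr^{n+1}_+}|\phi_t\ast b(x)||\phi_t\ast f(x)|\,\frac{dx\,dt}{t}\ls\|d\mu\|_\fai\|f\|_{H_{\fai,S}(\rn)}=\|d\mu\|_\fai\|f\|_{H_\fai(\rn)}$, the last equality being Theorem \ref{t4.1}. Using $f=\sum_j\lz_j a_j$ and the $\sz$-quasi-subadditivity of $\fai$ (Lemma \ref{l2.1}(i)), this reduces to a per-atom bound: for a $(\fai,q,0)$-atom $a$ associated with $B=B(x_B,r_B)$, decompose $\rr^{n+1}_+=\bigcup_{k\ge0}(\widehat{2^kB}\setminus\widehat{2^{k-1}B})$ ($\widehat{2^{-1}B}:=\emptyset$) and, on the $k$-th annular tent, apply the Cauchy--Schwarz inequality pairing $(\int_{\widehat{2^kB}\setminus\widehat{2^{k-1}B}}|\phi_t\ast a(x)|^2\,\frac{dx\,dt}{t})^{1/2}$ — estimated, as in the proof of Proposition \ref{p4.1} (see \eqref{4.14}), from $\supp a\subset B$, the moment condition $\int a=0$ and the Schwartz decay of $\phi$ — against the $\fai$-Carleson bound $(\int_{\widehat{2^kB}}|d\mu(x,t)|)^{1/2}\le\|d\mu\|_\fai\,\|\chi_{2^kB}\|_{L^\fai(\rn)}/|2^kB|^{1/2}$; the ratios $\|\chi_{2^kB}\|_{L^\fai(\rn)}/\|\chi_B\|_{L^\fai(\rn)}$ are controlled by Lemma \ref{l2.4}(vi) after fixing $p_0\in(0,i(\fai))$ and $q_0\in(q(\fai),\fz)$, and the resulting series in $k$ converges precisely because $nq(\fai)<(n+1)i(\fai)$ permits $q_0/p_0<(n+1)/n$. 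Reassembling the per-atom bounds through the definition of $\blz(\{\lz_j a_j\}_j)$ and \eqref{3.2} (for $H_{\fai,S}(\rn)$) yields the displayed estimate, hence $\|b\|_{\bbmo_\fai(\rn)}\ls\|d\mu\|_\fai$. I expect the two main obstacles to be (a) making the Calder\'on formula and the interchange of integrals rigorous for a merely $L^2_{\loc}$ symbol $b$ — this needs polynomial growth control of $\phi_t\ast b$ deduced from the $\fai$-Carleson condition plus a truncation argument; and (b) the bookkeeping in the per-atom estimate, where the Muckenhoupt and reverse-H\"older exponents, the type exponents of $\fai$, and the moment order $s=0$ must be balanced so that the geometric series converges exactly under the hypothesis $nq(\fai)<(n+1)i(\fai)$ — this is the Musielak--Orlicz analogue of the classical interplay between John--Nirenberg and the atomic decomposition, and it is where Lemmas \ref{l2.1} and \ref{l2.4} are used most heavily.
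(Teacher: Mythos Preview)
Your argument for (i) is essentially the paper's: decompose $b$ around $2B_0$ into constant, local, and far parts; kill the constant via $\int_\rn\phi=0$; control the local part by Plancherel plus the John--Nirenberg inequality for $\bbmo_\fai$ (Corollary~\ref{c5.1}); and control the far part pointwise via Schwartz decay and the oscillation growth encoded in Lemma~\ref{l5.2}.

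For (ii) the routes diverge. The paper does \emph{not} decompose $f$ into $(\fai,q,0)$-atoms on $\rn$ and then estimate $\phi_t\ast a_j$ over annular tents as you propose. Instead, after the Calder\'on--Plancherel identity \eqref{5.5}, it applies Theorem~\ref{t3.1} to decompose the tent-space element $\phi_t\ast f\in T_\fai(\rr^{n+1}_+)$ into $(\fai,\fz)$ \emph{tent atoms} $a_j$ supported in $\widehat{B_j}$. Since each $a_j$ already lives inside a single tent, one Cauchy--Schwarz gives
$\int_{\widehat{B_j}}|a_j||\phi_t\ast b|\,\frac{dx\,dt}{t}\le\|a_j\|_{T^2_2(\rr^{n+1}_+)}\bigl(\int_{\widehat{B_j}}|d\mu|\bigr)^{1/2}\ls\|d\mu\|_\fai$,
and summing against $\sum_j|\lz_j|\ls\blz(\{\lz_ja_j\}_j)\ls\|f\|_{H_{\fai,S}(\rn)}$ (via \eqref{3.2} and the uniformly upper type~$1$ property) finishes immediately. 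Your approach is workable but heavier: the support of $\phi_t\ast a$ for an $\rn$-atom $a$ is not confined to any tent, which forces the annular-tent splitting and a pointwise decay analysis of the sort carried out in Proposition~\ref{p4.1}; you must also convert the \emph{weighted} size condition $\|a\|_{L^q_\fai(B)}\le\|\chi_B\|_{L^\fai(\rn)}^{-1}$ into unweighted $L^2$ and $L^1$ bounds (cleanest if you take $q=\fz$, i.e.\ work in $H^{\fai,\fz,0}_{\fin}(\rn)$, rather than a finite weighted exponent). The paper's tent-space route buys this localization for free---and indeed your own citation of \eqref{3.2} points at the tent-space atomic decomposition rather than the $\rn$-atomic one you actually use.
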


To prove Theorem \ref{t5.1}, we need the following several lemmas.
The following lemma is just \cite[Theorem 2.7]{ly13}.

\begin{lemma}\label{l5.1}
Let $\fai$ be as in Definition \ref{d2.2} and $p\in[1,[q(\fai)]')$, where $q(\fai)$
is as in \eqref{2.3}. Then
$f\in\bbmo_{\fai}(\rn)$ if and only if $f\in \bbmo_\fai^p(\rn)$, where
$$\bbmo_\fai^p(\rn):=\{f\in L^1_{\loc}(\rn):\ \|f\|_{\bbmo_\fai^p(\rn)}<\fz\}$$
and
\begin{eqnarray*}
\|f\|_{\bbmo_\fai^p(\rn)}:=
\sup_{B\subset\rn}\frac{1}{\|\chi_{B}\|_{L^{\varphi}(\rn)}}\left\{
\int_B\left[\frac{|f(x)-f_B|}
{\fai(x,\|\chi_{B}\|_{L^{\fai}(\rn)}^{-1})}\right]^{p}
\fai\left(x,\|\chi_B\|_{L^{\fai}(\rn)}^{-1}\right)\,dx\right\}^{\frac{1}{p}},
\end{eqnarray*}
where the supremum is taken over all balls $B$ in $\rn$ and $f_B$ is as in \eqref{5.x1}.
Moreover, for all $f\in\bbmo_\fai(\rn)$,
$\|f\|_{\bbmo_{\fai}(\rn)}\sim\|f\|_{\bbmo_\fai^p(\rn)}$,
where the implicit constants are independent of $f$.
\end{lemma}

\begin{lemma}\label{l5.2}
Let $\fai$ be as in Theorem \ref{t5.1}, $B_0:=B(x_0,\dz)$ and
$\epz\in(n[\frac{q(\fai)}{i(\fai)}-1],\fz)$, where $q(\fai)$ and $i(\fai)$ are
respectively as in \eqref{2.3} and \eqref{2.1}.
Then there exists a positive constant $C$ such that, for all $f\in\bbmo_{\fai}(\rn)$,
$$\int_\rn\frac{\dz^\epz|f(x)-f_{B_0}|}{\dz^{n+\epz}
+|x-x_0|^{n+\epz}}\,dx\le C\frac{\|\chi_{B_0}\|_{L^\fai(\rn)}}{|B_0|}
\|f\|_{\bbmo_\fai(\rn)},$$
where $f_{B_0}$ is as in \eqref{5.x1} with $B$ replaced by $B_0$.
\end{lemma}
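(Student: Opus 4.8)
The strategy is the standard one: decompose $\rn$ into the dyadic annuli around $B_0$ and play the decay of the kernel $\dz^\epz/(\dz^{n+\epz}+|x-x_0|^{n+\epz})$ off against the at-most-polynomial growth of the local mean oscillations of $f$, the latter being controlled by the $\aa_\fz(\rn)$ membership of $\fai$. Write $B_0=B(x_0,\dz)$, $U_0(B_0):=B_0$ and $U_k(B_0):=2^kB_0\setminus2^{k-1}B_0$ for $k\in\nn$, so that $\rn=\bigcup_{k=0}^\fz U_k(B_0)$. For $x\in U_k(B_0)$ with $k\ge1$ one has $|x-x_0|\ge2^{k-1}\dz$, whence $\dz^{n+\epz}+|x-x_0|^{n+\epz}\gs2^{k(n+\epz)}\dz^{n+\epz}$, while for $k=0$ the denominator is $\ge\dz^{n+\epz}$; therefore
$$\int_\rn\frac{\dz^\epz|f(x)-f_{B_0}|}{\dz^{n+\epz}+|x-x_0|^{n+\epz}}\,dx\ls\sum_{k=0}^\fz\frac{1}{2^{k(n+\epz)}\dz^n}\int_{2^kB_0}|f(x)-f_{B_0}|\,dx.$$

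The second step is to estimate $\int_{2^kB_0}|f(x)-f_{B_0}|\,dx$. By the triangle inequality it is at most $\int_{2^kB_0}|f(x)-f_{2^kB_0}|\,dx+|2^kB_0|\,|f_{2^kB_0}-f_{B_0}|$; the first term is $\le\|f\|_{\bbmo_\fai(\rn)}\|\chi_{2^kB_0}\|_{L^\fai(\rn)}$ directly from Definition \ref{d5.1}, and for the second I telescope, $|f_{2^kB_0}-f_{B_0}|\le\sum_{i=1}^k|f_{2^iB_0}-f_{2^{i-1}B_0}|$, where each summand satisfies $|f_{2^iB_0}-f_{2^{i-1}B_0}|\le|2^{i-1}B_0|^{-1}\int_{2^iB_0}|f(x)-f_{2^iB_0}|\,dx\ls|2^{i-1}B_0|^{-1}\|f\|_{\bbmo_\fai(\rn)}\|\chi_{2^iB_0}\|_{L^\fai(\rn)}$. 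Hence everything reduces to controlling $\|\chi_{2^kB_0}\|_{L^\fai(\rn)}$ in terms of $\|\chi_{B_0}\|_{L^\fai(\rn)}$.

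For this, which is the only place the hypotheses on $\fai$ are used, I choose $p_0\in(0,i(\fai))$ and $q_0\in(q(\fai),\fz)$ with $nq_0/p_0<n+\epz$ — possible precisely because $n+\epz>nq(\fai)/i(\fai)$ — so that $\fai$ is of uniformly lower type $p_0$ and $\fai\in\aa_{q_0}(\rn)$. Writing $\lz_k:=\|\chi_{2^kB_0}\|_{L^\fai(\rn)}$ (so $\lz_k\ge\lz_0$ since $2^kB_0\supset B_0$) and using Lemma \ref{l2.1}(iii) to get $\fai(2^kB_0,1/\lz_k)=1=\fai(B_0,1/\lz_0)$, I combine Lemma \ref{l2.4}(vi), which yields $1=\fai(2^kB_0,1/\lz_k)\ls2^{knq_0}\fai(B_0,1/\lz_k)$, with the uniformly lower type $p_0$ property, which yields $\fai(B_0,1/\lz_k)\ls(\lz_0/\lz_k)^{p_0}\fai(B_0,1/\lz_0)=(\lz_0/\lz_k)^{p_0}$; solving the resulting inequality $(\lz_k/\lz_0)^{p_0}\ls2^{knq_0}$ gives $\|\chi_{2^kB_0}\|_{L^\fai(\rn)}\ls2^{knq_0/p_0}\|\chi_{B_0}\|_{L^\fai(\rn)}$.

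Plugging this back, since $q_0/p_0>1$ the telescoped sum $\sum_{i=1}^k2^{inq_0/p_0}2^{-(i-1)n}$ is dominated by its last term, $\ls2^{knq_0/p_0}$, so $\int_{2^kB_0}|f(x)-f_{B_0}|\,dx\ls2^{knq_0/p_0}\|f\|_{\bbmo_\fai(\rn)}\|\chi_{B_0}\|_{L^\fai(\rn)}$; inserting this into the first display and using $\dz^n\sim|B_0|$ leaves the geometric series $\sum_{k\ge0}2^{k(nq_0/p_0-n-\epz)}$, which converges because $nq_0/p_0<n+\epz$, and this gives the asserted estimate. The only delicate point — though still routine — is the comparison of $\|\chi_{2^kB_0}\|_{L^\fai(\rn)}$ with $\|\chi_{B_0}\|_{L^\fai(\rn)}$: the exponent it produces, $nq_0/p_0$, must be kept strictly below $n+\epz$, which forces the specific choice of $p_0$ and $q_0$ and is exactly what the admissible range of $\epz$ permits.
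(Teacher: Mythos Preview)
Your argument is correct and follows essentially the same route as the paper: dyadic annular decomposition, telescoping $|f_{2^kB_0}-f_{B_0}|$, the key Luxembourg-norm comparison $\|\chi_{2^kB_0}\|_{L^\fai(\rn)}\ls2^{knq_0/p_0}\|\chi_{B_0}\|_{L^\fai(\rn)}$ via Lemma~\ref{l2.4}(vi) and the uniformly lower type $p_0$ property, and summation of the resulting geometric series using $\epz>n(q_0/p_0-1)$. The only cosmetic slip is in the phrase ``dominated by its last term, $\ls2^{knq_0/p_0}$'': the last term of $\sum_{i=1}^k2^{inq_0/p_0}2^{-(i-1)n}$ is actually $\sim2^{kn(q_0/p_0-1)}$, and the extra factor $2^{kn}$ enters when you multiply by $|2^kB_0|$; your stated bound on $\int_{2^kB_0}|f-f_{B_0}|$ is nonetheless correct.
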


\begin{proof}
For any $k\in\zz_+$, let $B_k:=2^kB_0$. Then for all $k\in\zz_+$, we see that
\begin{eqnarray}\label{5.1}
\qquad|f_{2^{k+1}B}-f_{2^kB}|\ls\frac{1}{|B_{k+1}|}
\int_{B_{k+1}}|f(x)-f_{B_{k+1}}|\,dx\ls
\frac{\|\chi_{B_{k+1}}\|_{L^\fai(\rn)}}{|B_{k+1}|}
\|f\|_{\bbmo_\fai(\rn)}.
\end{eqnarray}
By $\epz\in(n[\frac{q(\fai)}{i(\fai)}-1],\fz)$, we know that
there exist $p_0\in(0,i(\fai))$ and $q_0\in(q(\fai),\fz)$ such that
$\epz>n(\frac{q_0}{p_0}-1)$. Then $\fai\in\aa_{q_0}(\rn)$ and $\fai$ is of uniformly
lower type $p_0$, which, together with Lemmas \ref{l2.4}(vi) and \ref{l2.1}(iii),
implies that, for all $j\in\zz_+$,
$$\fai\lf(B_j, 2^{-jnq_0/p_0}\|\chi_{B_0}\|^{-1}_{L^\fai(\rn)}\r)\ls2^{-jnq_0}
\fai\lf(B_j,\|\chi_{B_0}\|^{-1}_{L^\fai(\rn)}\r)\ls
\fai\lf(B_0,\|\chi_{B_0}\|^{-1}_{L^\fai(\rn)}\r)\sim1.$$
From this, we deduce that, for all $j\in\zz_+$,
\begin{eqnarray}\label{5.x2}
\|\chi_{B_j}\|_{L^\fai(\rn)}
\ls2^{jnq_0/p_0}\|\chi_{B_0}\|_{L^\fai(\rn)},
\end{eqnarray}
which, together with \eqref{5.1}, implies that, for all $k\in\nn$,
\begin{eqnarray*}
|f_{B_k}-f_{B_0}|&&\le\sum_{j=1}^k|f_{B_j}-f_{B_{j-1}}|\ls\|f\|_{\bbmo_\fai(\rn)}
\sum_{j=1}^k\frac{\|\chi_{B_{j}}\|_{L^\fai(\rn)}}{|B_{j}|}\\ \nonumber
&&\ls\lf\{\sum_{j=1}^{k}2^{jn(q_0/p_0-1)}\r\}
\frac{\|\chi_{B_0}\|_{L^\fai(\rn)}}{|B_0|}
\|f\|_{\bbmo_\fai(\rn)}\\ \nonumber
&&\ls2^{kn(q_0/p_0-1)}\frac{\|\chi_{B_0}\|_{L^\fai(\rn)}}{|B_0|}
\|f\|_{\bbmo_\fai(\rn)}.
\end{eqnarray*}
By this and \eqref{5.x2}, together with $\epz>n(q_0/p_0-1)$, we conclude that
\begin{eqnarray*}
&&\int_\rn\frac{\dz^\epz|f(x)-f_{B_0}|}{\dz^{n+\epz}
+|x-x_0|^{n+\epz}}\,dx\\
&&\hs\le \int_{B_0}\frac{\dz^\epz|f(x)-f_{B_0}|}{\dz^{n+\epz}
+|x-x_0|^{n+\epz}}\,dx+\sum_{k=0}^\fz\int_{B_{k+1}\setminus B_{k}}\cdots\\
&&\hs\ls\int_{B_0}\frac{\dz^\epz|f(x)-f_{B_0}|}{\dz^{n+\epz}}\,dx
+\sum_{k=1}^\fz(2^k\dz)^{-(n+\epz)}\dz^\epz\int_{B_k}
|f(x)-f_{B_0}|\,dx\\
&&\hs\ls\frac{\|\chi_{B_0}\|_{L^\fai(\rn)}}{|B_0|}
\|f\|_{\bbmo_\fai(\rn)}+\sum_{k=1}^\fz2^{-k(n+\epz)}\dz^{-n}
\lf[\int_{B_k}|f(x)-f_{B_k}|\,dx+|f_{B_k}-f_{B_0}|\r]\\
&&\hs\ls\lf\{\sum_{k=1}^\fz2^{-k(n+\epz-nq_0/p_0)}\r\}
\frac{\|\chi_{B_0}\|_{L^\fai(\rn)}}{|B_0|}
\|f\|_{\bbmo_\fai(\rn)}\ls\frac{\|\chi_{B_0}\|_{L^\fai(\rn)}}{|B_0|}
\|f\|_{\bbmo_\fai(\rn)},
\end{eqnarray*}
which completes the proof of Lemma \ref{l5.2}.
\end{proof}

Let $H^{\fai,\,\fz,\,s}_{\fin}(\rn)$ denote the sets of
all finite combinations of $(\fai,\,\fz,\,s)$-atoms. It is easy to see,
via the definition of $H^{\fai,\,\fz,\,s}(\rn)$, that
$H^{\fai,\,\fz,\,s}_{\fin}(\rn)$ is dense in $H^{\fai,\,\fz,\,s}(\rn)$
according to the quasi-norm $\|\cdot\|_{H^{\fai,\,\fz,\,s}(\rn)}$.
Recall that, if, for any $f\in H^{\fai,\,\fz,\,s}_{\fin}(\rn)$,
letting
$$\|f\|_{H^{\fai,\,\fz,\,s}_{\fin}(\rn)}:=\inf\lf\{\Lambda_q(\{b_j\}_{j=1}^k):\
f=\sum^k_{j=1}b_j\r\},$$
where the infimum is taken over all finite combinations $\{b_j\}_{j=1}^k$
of $(\fai,\,\fz,\,s)$-atoms of $f$, then Ky proved, in ii) of \cite[Theorem 3.4]{k}, that
$\|\cdot\|_{H^{\fai,\,\fz,\,s}_{\fin}(\rn)}$ and
$\|\cdot\|_{H_\fai(\rn)}$ are equivalent quasi-norms on
$H^{\fai,\,\fz,\,s}_{\fin}(\rn)\cap C(\rn)$ .

The following lemma is just \cite[Theorem 3.2]{k}.

\begin{lemma}\label{l5.3}
Let $\fai$ be as in Definition \ref{d2.2} satisfying $nq(\fai)<(n + 1)i(\fai)$,
where $q(\fai)$ and $i(\fai)$ are respectively as in \eqref{2.3} and \eqref{2.1}.
Then the dual space of $H_\fai(\rn)$, denoted by $(H_\fai(\rn))^\ast$,
is $\mathrm{BMO}_\fai(\rn)$ in the following sense:

{\rm (i)} Suppose that $b\in \bbmo_\fai(\rn)$. Then the linear functional
$L_b:\ f\to L_b(f):=
\int_\rn f(x)b(x)\,dx$, initially defined for $H^{\fai,\,\fz,\,s}_{\fin}(\rn)$,
has a bounded extension to $H_\fai(\rn)$.

{\rm (ii)} Conversely, every continuous linear functional on $H_\fai(\rn)$ arises as the
above with a unique $b\in\bbmo_\fai(\rn)$.

Moreover, $\|b\|_{\bbmo_\fai(\rn)}\sim\|L_b\|_{(H_\fai(\rn))^\ast}$,
where the implicit constants are independent of $b$.
\end{lemma}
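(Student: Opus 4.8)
The plan is to run the classical Fefferman--Stein duality argument, with the growth function $\fai$ replacing Lebesgue measure at every step. The first thing I would note is that the hypothesis $nq(\fai)<(n+1)i(\fai)$ is equivalent to $\lfz n[q(\fai)/i(\fai)-1]\rfz=0$, so that $(\fai,q,0)$ is admissible for every $q\in(q(\fai),\fz]$; by Lemma \ref{l4.1}, $H_\fai(\rn)=H^{\fai,\,q,\,0}(\rn)$ with equivalent quasi-norms, only the vanishing-mean condition is imposed on atoms (which is exactly what Definition \ref{d5.1} sees), and, as remarked above, $H^{\fai,\,\fz,\,0}_{\fin}(\rn)$ is dense in $H_\fai(\rn)$. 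I would then fix once and for all a finite $q\in(q(\fai),\fz)$ and, using Lemma \ref{l2.4}(iv), a finite $q_0\in(1,\fz)$ with $\fai\in\rh_{q_0}(\rn)$, and set $r:=qq_0'\in(1,\fz)$, so that $(1/q_0-1)/q=-1/r$.

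For part (i), given $b\in\bbmo_\fai(\rn)$ and $f=\sum_j b_j\in H^{\fai,\,\fz,\,0}_{\fin}(\rn)$ with each $b_j$ a multiple of a $(\fai,\fz,0)$-atom supported in a ball $B_j$, I would use the vanishing mean of $b_j$, Definition \ref{d5.1}, and $\|b_j\|_{L^\fz(B_j)}=\|b_j\|_{L^\fz_{\fai}(B_j)}$ to get
\begin{align*}
\lf|\dint_{\rn}f(x)b(x)\,dx\r|&\le\dsum_j\|b_j\|_{L^\fz(B_j)}\dint_{B_j}|b(x)-b_{B_j}|\,dx\\
&\le\|b\|_{\bbmo_\fai(\rn)}\dsum_j\|b_j\|_{L^\fz_{\fai}(B_j)}\|\chi_{B_j}\|_{L^\fai(\rn)}.
\end{align*}
The remaining point is the elementary inequality $\sum_j\|b_j\|_{L^\fz_{\fai}(B_j)}\|\chi_{B_j}\|_{L^\fai(\rn)}\ls\blz_{\fz}(\{b_j\}_j)$: writing $t_j:=\|b_j\|_{L^\fz_{\fai}(B_j)}$ and normalizing $\blz_{\fz}(\{b_j\}_j)=1$ so that $\sum_j\fai(B_j,t_j)\le1$, one has $\fai(B_j,\|\chi_{B_j}\|_{L^\fai(\rn)}^{-1})=1$ by Lemma \ref{l2.1}(iii), hence $t_j\le\|\chi_{B_j}\|_{L^\fai(\rn)}^{-1}$, and the uniformly upper type $1$ property of $\fai$ applied with dilation factor $(\|\chi_{B_j}\|_{L^\fai(\rn)}t_j)^{-1}\ge1$ yields $\|\chi_{B_j}\|_{L^\fai(\rn)}t_j\ls\fai(B_j,t_j)$; summing over $j$ finishes it. Thus $|L_b(f)|\ls\|b\|_{\bbmo_\fai(\rn)}\|f\|_{H_\fai(\rn)}$, and by density $L_b$ extends to $H_\fai(\rn)$ with $\|L_b\|_{(H_\fai(\rn))^\ast}\ls\|b\|_{\bbmo_\fai(\rn)}$.

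For part (ii), given $\ell\in(H_\fai(\rn))^\ast$, I would first observe that for a ball $B$ and $g\in L^r(B)$ supported in $B$ with $\int_B g\,dx=0$, the function $g/(\|g\|_{L^q_{\fai}(B)}\|\chi_B\|_{L^\fai(\rn)})$ is a $(\fai,q,0)$-atom, so $\|g\|_{H_\fai(\rn)}\ls\|g\|_{L^q_{\fai}(B)}\|\chi_B\|_{L^\fai(\rn)}$, while H\"older's inequality combined with $\fai\in\rh_{q_0}(\rn)$ gives, uniformly in $B$, $\|g\|_{L^q_{\fai}(B)}\ls|B|^{-1/r}\|g\|_{L^r(B)}$. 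Hence $g\mapsto\ell(g)$ is bounded on the closed subspace $L^r_0(B):=\{g\in L^r(B):\ \supp g\subset B,\ \int_B g\,dx=0\}$ of $L^r(B)$; Hahn--Banach and the Riesz representation of $(L^r(B))^\ast=L^{r'}(B)$ produce $b_B\in L^{r'}(B)$ with $\ell(g)=\int_B gb_B\,dx$ for all $g\in L^r_0(B)$. Since $L^r_0(B_1)\subset L^r_0(B_2)$ when $B_1\subset B_2$, the difference $b_{B_1}-b_{B_2}$ is a.e.\ constant on $B_1$; gluing the $b_B$'s along balls $B_k:=B(0,2^k)$ after subtracting suitable constants yields $b\in L^{r'}_{\loc}(\rn)$ with $\ell(g)=\int_{\rn}gb\,dx$ for all $g\in\bigcup_k L^r_0(B_k)\supset H^{\fai,\,\fz,\,0}_{\fin}(\rn)$, so $\ell=L_b$ there and, by density, $\ell$ is the bounded extension of $L_b$ to $H_\fai(\rn)$; the same argument with $\ell=L_{b'}$ shows $b$ is unique modulo constants. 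To see $b\in\bbmo_\fai(\rn)$, I would test $\ell$ against $g:=\lf[\mathrm{sgn}(b-b_B)-c\r]\chi_B$ with $c$ chosen so $\int_B g\,dx=0$; then $g\in L^r_0(B)$, $\|g\|_{L^r(B)}\le2|B|^{1/r}$, $\int_B gb\,dx=\int_B|b-b_B|\,dx$, and the two estimates above give
\begin{align*}
\dint_B|b(x)-b_B|\,dx=|\ell(g)|&\le\|\ell\|_{(H_\fai(\rn))^\ast}\|g\|_{H_\fai(\rn)}\\
&\ls\|\ell\|_{(H_\fai(\rn))^\ast}|B|^{-1/r}\|g\|_{L^r(B)}\|\chi_B\|_{L^\fai(\rn)}\ls\|\ell\|_{(H_\fai(\rn))^\ast}\|\chi_B\|_{L^\fai(\rn)},
\end{align*}
whence $\|b\|_{\bbmo_\fai(\rn)}\ls\|\ell\|_{(H_\fai(\rn))^\ast}$; together with part (i) this gives $\|b\|_{\bbmo_\fai(\rn)}\sim\|L_b\|_{(H_\fai(\rn))^\ast}$.

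I expect the main obstacle to be part (ii): converting the abstract functional into the concrete pairing $L_b$ forces an ordinary $L^r$--$L^{r'}$ duality on each fixed ball, whereas the Hardy-space size of $g$ is measured through the \emph{weighted} norm $\|\cdot\|_{L^q_{\fai}(B)}$ built into Definition \ref{d4.7}. Bridging the two is precisely where the uniform reverse H\"older property $\fai\in\rh_{q_0}(\rn)$ enters, and one must check that the powers of $|B|$ cancel --- which is why the identity $(1/q_0-1)/q=-1/r$ is so convenient --- so that all the local bounds are genuinely uniform in $B$ (and in the parameter $t$ hidden inside $\|\cdot\|_{L^q_{\fai}(B)}$). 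The gluing step and the verification that only constants are lost --- which is exactly the content of the hypothesis $nq(\fai)<(n+1)i(\fai)$, forcing $s=0$ and matching the definition of $\bbmo_\fai(\rn)$ --- are the remaining, more routine, points.
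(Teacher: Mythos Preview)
The paper does not prove Lemma \ref{l5.3} at all: it simply records that ``the following Lemma \ref{l5.3} is just \cite[Theorem 3.2]{k}'' and moves on. So there is no proof in the paper to compare your argument against; you are in effect reconstructing Ky's proof, and your outline is the classical Fefferman--Stein scheme adapted to the Musielak--Orlicz setting, which is indeed how \cite{k} proceeds.

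Your argument is essentially correct, but there is one genuine subtlety in part (i) that you glide over. You bound $|L_b(f)|\ls\|b\|_{\bbmo_\fai(\rn)}\,\blz_\fz(\{b_j\}_j)$ for \emph{a given} finite decomposition of $f$, and then claim this gives $|L_b(f)|\ls\|b\|_{\bbmo_\fai(\rn)}\|f\|_{H_\fai(\rn)}$. Taking the infimum over finite decompositions yields only the \emph{finite} atomic norm $\|f\|_{H^{\fai,\fz,0}_{\fin}(\rn)}$, which a priori dominates (not is dominated by) $\|f\|_{H^{\fai,\fz,0}(\rn)}\sim\|f\|_{H_\fai(\rn)}$. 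Closing this gap requires the finite atomic decomposition theorem --- that $\|\cdot\|_{H^{\fai,q,s}_{\fin}(\rn)}\sim\|\cdot\|_{H_\fai(\rn)}$ on $H^{\fai,q,s}_{\fin}(\rn)$ --- which Ky proves separately (see \cite[Theorem 3.4]{k}); without it, the density argument does not go through. This is exactly the issue that, in the classical case, was clarified by Meda--Sj\"ogren--Vallarino and others, and it is not automatic. Everything else in your sketch (the upper type $1$ trick to get $\sum_j t_j\|\chi_{B_j}\|_{L^\fai(\rn)}\ls\blz_\fz$, the $\rh_{q_0}$ bridge $\|g\|_{L^q_\fai(B)}\ls|B|^{-1/r}\|g\|_{L^r(B)}$, the Hahn--Banach gluing, and the test function $g=[\mathrm{sgn}(b-b_B)-c]\chi_B$) is sound.
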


\begin{remark}\label{r5.1}
We point out that Ky \cite{k} established Lemma \ref{l5.3} under the additional
assumption that $\fai$ is uniformly locally integrable, namely,
for all compact set $K\subset\rn$, it holds that
$$\int_K\sup_{t>0}\frac{\fai(x,t)}{\int_K\fai(y,t)\,dy}\,dx<\fz.$$
More precisely, Ky \cite{k} needed this additional assumption for $\fai$
in order to guarantee that \cite[Lemma 6.1]{k} holds true. However,
\cite[Lemma 6.1]{k} is an easy consequence of (iv) and (vii) of Lemma \ref{l2.4}.
Thus, this additional assumption is superfluous for Lemma \ref{l5.3}.
\end{remark}

Now we prove Theorem \ref{t5.1} by using Lemmas \ref{l5.2} and \ref{l5.3}.

\begin{proof}[Proof of Theorem \ref{t5.1}]
We first prove (i). Let $b\in\bbmo_\fai(\rn)$.
For any given ball $B_0:=B(x_0,r_0)$, let $\wz B:=2B_0$. Write
\begin{eqnarray}\label{5.2}
b=b_{\wz B}+(b-b_{\wz B})\chi_{\wz B}+
(b-b_{\wz B})\chi_{\rn\setminus\wz B}=:b_1+b_2+b_3.
\end{eqnarray}
For $b_1$, by $\int_\rn\phi(x)\,dx=0$, we see that, for
all $t\in(0,\fz)$, $\phi_t\ast b_1=0$, which implies that
\begin{eqnarray}\label{5.3}
\int_{\widehat{B_0}}|\phi_t\ast b_1(x)|^2\frac{dx\,dt}{t}=0.
\end{eqnarray}
For $b_2$, from Proposition \ref{p4.1}(i), it follows that
\begin{eqnarray}\label{5.1a}
\hs\hs\int_{\widehat{B_0}}|\phi_t\ast b_2(x)|^2\frac{dx\,dt}{t}\le\int_{\rr^{n+1}_+}
|\phi_t\ast b_2(x)|^2\frac{dx\,dt}{t}\ls\|b_2\|^2_{L^2(\rn)}
\sim\int_{\wz B}|b(x)-b_{\wz B}|^2\,dx.
\end{eqnarray}
Moreover, by the assumption $q(\fai)[r(\fai)]'\in(1,2)$, we see that $[q(\fai)]'>2$
and $r(\fai)>\frac{2([q(\fai)]'-1)}{[q(\fai)]'-2}$. From this and the definition of $r(\fai)$,
we infer that there exists $q\in(2,[q(\fai)]')$ such that
$r(\fai)>\frac{2(q-1)}{q-2}$ and hence $\fai\in\rh_{2(q-1)/(q-2)}(\rn)$.
By this, H\"older's inequality and Lemma \ref{l5.1}, we conclude that
\begin{eqnarray*}
\int_{\wz B}|b(x)-b_{\wz B}|^2\,dx&&\le
\lf\{\int_{\wz B}|b(x)-b_{\wz B}|^q\lf[\fai\lf(x,\|\chi_{\wz B}
\|_{L^{\fai}(\rn)}^{-1}\r)\r]^{1-q}\,dx\r\}^{2/q}\\
&&\hs\times\lf\{\int_{\wz B}\lf[\fai\lf(x,
\|\chi_{\wz B}\|_{L^{\fai}(\rn)}^{-1}\r)\r]^{2(q-1)/(q-2)}\,dx\r\}^{(q-2)/q}\\
&&\ls\|b\|^2_{\bbmo_\fai^q(\rn)}\|\chi_{\wz B}\|^2_{L^\fai(\rn)}|B_0|^{-1}
\sim\|b\|^2_{\bbmo_\fai(\rn)}\|\chi_{B_0}\|^2_{L^\fai(\rn)}|B_0|^{-1}
\end{eqnarray*}
which, together with \eqref{5.1a}, implies that
\begin{eqnarray}\label{5.4}
&&\frac{|B_0|}{\|\chi_{B_0}\|_{L^\fai(\rn)}}
\lf\{\frac{1}{|B_0|}\int_{\widehat{B_0}}|\phi_t\ast b_2(x)|^2
\frac{dx\,dt}{t}\r\}^{1/2}\ls\|b\|_{\bbmo_\fai(\rn)}.
\end{eqnarray}

Let $\epz$ be as in Lemma \ref{l5.2}.
For any $(x,t)\in\widehat{B}_0$ and $y\in(\wz{B})^\complement$,
we see that $t\in(0,r_{B_0})$ and $|y-x|\gs|y-x_0|$. By this,
$\phi\in\cs(\rn)$ and Lemma \ref{l5.2}, we see that
\begin{eqnarray*}
|\phi_t\ast b_3(x)|&&\ls
\int_{(\wz{B})^\complement}\frac{t^\epz|b(x)-b_{\wz B}|}{(t+|x-y|)^{n+\epz}}\,dy\\
&&\ls \int_{(\wz{B})^\complement}
\frac{t^\epz|b(x)-b_{\wz B}|}{|y-x_0|^{n+\epz}}\,dy\ls\frac{t^\epz}{r_0^\epz}
\frac{\|\chi_{B_0}\|_{L^\fai(\rn)}}{|B_0|}
\|b\|_{\bbmo_\fai(\rn)},
\end{eqnarray*}
which, together with $\epz>n[q(\fai)/i(\fai)-1]>0$, implies that
\begin{eqnarray*}
&&\frac{|B_0|}{\|\chi_{B_0}\|_{L^\fai(\rn)}}
\lf\{\frac{1}{|B_0|}\int_{\widehat{B_0}}|\phi_t\ast b_3(x)|^2\frac{dx\,dt}{t}
\r\}^{1/2}\\ \nonumber
&&\hs\ls
\lf\{\int_0^{r_0}\frac{t^{2\epz-1}}
{r^{2\epz}_0}\,dt\r\}^{1/2}\|b\|_{\bbmo_\fai(\rn)}
\ls\|b\|_{\bbmo_\fai(\rn)}.
\end{eqnarray*}
From this, \eqref{5.2}, \eqref{5.3} and \eqref{5.4}, we deduce that
$$\frac{|B_0|}{\|\chi_{B_0}\|_{L^\fai(\rn)}}
\lf\{\frac{1}{|B_0|}\int_{\widehat{B_0}}|\phi_t\ast b(x)|^2\frac{dx\,dt}
{t}\r\}^{1/2}\ls\|b\|_{\bbmo_\fai(\rn)},$$
which, together with the arbitrariness of $B_0\subset\rn$, implies
that $d\mu$ is a $\fai$-Carleson measure on $\rr^{n+1}_+$ and
$\|d\mu\|_{\fai}\ls\|b\|_{\bbmo_\fai(\rn)}$. This finishes the proof of (i).

Now we prove (ii). Let $f\in H^{\fai,\,\fz,\,s}_{\fin}(\rn)$.
Then by $f\in L^\fz(\rn)$ with compact support, $b\in L^2_\loc(\rn)$
and the Plancherel formula, together with \eqref{4.x1}, we conclude that
\begin{eqnarray}\label{5.5}
\int_{\rn}f(x)\overline{b(x)}\,dx=\int_{\rr^{n+1}_+}\phi_t\ast
f(x)\overline{\phi_t\ast b(x)}\,\frac{dx\,dt}{t},
\end{eqnarray}
where $\overline{b(x)}$ and $\overline{\phi_t\ast b(x)}$ denote, respectively,
the conjugates of $b(x)$ and $\phi_t\ast b(x)$.
Moreover, from $f\in H^{\fai,\,\fz,\,s}_{\fin}(\rn)$
and Theorem \ref{t4.1}, it follows that $f\in H_{\fai,S}(\rn)$,
which further implies that
$\phi_t\ast f\in T_{\fai}(\rr^{n+1}_+)$.  By this and Theorem
\ref{t3.1}, we find that there exist $\{\lz_j\}_j\subset\cc$ and a sequence
$\{a_j\}_j$ of $(\fai,\fz)$-atoms such that $\phi_t\ast f=\sum_{j}\lz_ja_j$.
From this, \eqref{5.5}, H\"older's inequality, \eqref{3.2}, Theorem \ref{t4.1} and
the uniformly upper type 1 property of $\fai$, we deduce that
\begin{eqnarray*}
\lf|\int_\rn f(x)\overline{b(x)}\,dx\r|
&&\le\sum_j|\lz_j|\int_{\rr^{n+1}_+}|a_j(x,t)|
|\phi_t\ast b(x)|\,\frac{dx\,dt}{t}\\
&&\le\sum_{j}|\lz_j|\lf\{\int_{\widehat{B}_j}|a_j(x,t)|^2
\frac{dx\,dt}{t}\r\}^{1/2}
\lf\{\int_{\widehat{B}_j}|\phi_t\ast b(x)|^2
\frac{dx\,dt}{t}\r\}^{1/2}\\
&&\ls\sum_{j}|\lz_j||B_j|^{1/2}
\|\chi_{B_j}\|^{-1}_{L^\fai(\rn)}
\lf\{\int_{\widehat{B}_j}|\phi_t\ast b(x)|^2
\frac{dx\,dt}{t}\r\}^{1/2}\\
&&\ls\sum_j|\lz_j|\|d\mu\|_{\fai}
\ls\blz(\{\lz_j a_j\}_j)\|d\mu\|_{\fai}\ls\|\phi_t\ast f\|_{T_\fai(\rr^{n+1}_+)}
\|d\mu\|_{\fai}\\
&&\sim\|f\|_{H_{\fai,S}(\rn)}\|d\mu\|_{\fai}\sim\|f\|_{H_\fai(\rn)}\|d\mu\|_{\fai},
\end{eqnarray*}
which implies that $b\in\bbmo_\fai(\rn)$ and $\|b\|_{\bbmo_\fai(\rn)}\ls\|d\mu\|_{\fai}$
and hence completes the proof of Theorem \ref{t5.1}.
\end{proof}

\smallskip

{\bf Acknowledgements.} The authors would like to thank the referee
for her/his many valuable remarks, which essentially improved the presentation
of this article and, particularly, motivated the authors to
try to remove the additional assumption of Theorem \ref{t4.1}
that $\vz\in\rh_2(\rn)$, appeared in the first version of this article.
The authors would also like to thank
Doctor Luong Dang Ky very much for some helpful discussions
on this article, which induce the authors to indeed remove the aforementioned
additional assumption of Theorem \ref{t4.1}.

\bigskip

Shaoxiong Hou, Dachun Yang (Corresponding author) and Sibei Yang

\medskip

School of Mathematical Sciences, Beijing Normal University \&
Laboratory of Mathematics and Complex Systems, Ministry of
Education, Beijing 100875, People's Republic of China

\smallskip

{\it E-mails}: \texttt{houshaoxiong@mail.bnu.edu.cn} (S. Hou)

\hspace{1.55cm}\texttt{dcyang@bnu.edu.cn} (D. Yang)

\hspace{1.55cm}\texttt{yangsibei@mail.bnu.edu.cn} (S. Yang)

\end{document}